\newcommand{\cA}{\mathcal{A}}
\newcommand{\cB}{\mathcal{B}}
\newcommand{\cC}{\mathcal{C}}
\newcommand{\cD}{\mathcal{D}}
\newcommand{\cE}{\mathcal{E}}
\newcommand{\cH}{\mathcal{H}}
\newcommand{\cX}{\mathcal{Y}}
\newcommand{\cY}{\mathcal{Z}}
\newcommand{\rA}{\mathrm A}
\newcommand{\rB}{\mathrm B}
\newcommand{\rBB}{\mathrm B} % bar res
\newcommand{\rC}{\mathrm C}
\newcommand{\rCC}{\mathrm C} % dupl obj
\newcommand{\rD}{\mathrm D}
\newcommand{\rE}{\mathrm E}
\newcommand{\rF}{\mathrm F}
\newcommand{\rG}{\mathrm G}
\newcommand{\rH}{\mathrm H}
\newcommand{\rL}{\mathrm L} % list monad
\newcommand{\rM}{\mathrm M}
\newcommand{\rN}{\mathrm N}
\newcommand{\rS}{\mathrm S}
\newcommand{\rT}{\mathrm T}
\newcommand{\rU}{\mathrm U}
\newcommand{\rV}{\mathrm V}
\newcommand{\rW}{\mathrm W}
\newcommand{\bA}{\mathbb A}
\newcommand{\bB}{\mathbb B}
\newcommand{\bC}{\mathbb C}
\newcommand{\bD}{\mathbb D}
\newcommand{\bG}{\mathbb G}
\newcommand{\bL}{\mathbb L}
\newcommand{\bS}{\mathbb S}
\newcommand{\bT}{\mathbb T}
\newcommand{\bV}{\mathbb V}
\newcommand{\cMix}{\mathsf{Mix}} % mixed dist laws
\newcommand{\cDist}{\mathsf{Dist}} % comonad dist laws
\newcommand{\cSet}{\mathsf{Set}} % sets
\newcommand{\cMod}{\mbox{-}\mathsf{Mod}}
\newcommand{\cComod}{\mbox{-}\mathsf{Comod}}
\newcommand{\Aop}{{A^\mathrm{op}}}
\newcommand{\Ae}{{A^\mathrm{e}}}
\renewcommand\epsilon{\varepsilon}
\renewcommand\phi{\varphi} \newcommand{\adj}{\Lambda}
\newcommand{\lmapsto}{\longmapsto}
\newcommand{\op}{\mathrm{op}}
\newcommand{\idty}{\mathsf{id}}
\newcommand{\lact}{\smalltriangleright}
\newcommand{\ract}{\smalltriangleleft}
\newcommand{\blact}{\blacktriangleright}
\newcommand{\bract}{\blacktriangleleft}
\newcommand{\rmref}[1]{{\rm (}\ref{#1}{\rm )}}
\numberwithin{equation}{section}
\newcommand{\STRINGDIAGRAM}{\xymatrix@R=10pt@C=10pt@H=0pt@W=0pt@M=0pt}
\newcommand{\bprod}{\ar@{-}[dd] &
\ar@{-}@(d,r)@/^3mm/[dl]}
\newcommand{\bkopr}{\ar@{-}[uu]
&\ar@{-}@(u,l)@/_3mm/[ul]}
\newcommand{\bmult}{\ar@{-}@(d,r)@/^2mm/}
\newcommand{\bkomu}{\ar@{-}@(u,r)@/_1.7mm/}
\newcommand{\bbigkomu}{\ar@{-}@(u,r)@/_4mm/}
\newcommand{\FERMION}{\ar@{-}@(d,u)}
\newcommand{\FERMIONddl}{\ar@{-}@(d,u)[ddl]}
\newcommand{\FERMIONddddr}{\ar@{-}@(d,u)[ddddr]}
\newcommand{\FERMIONddddrr}{\ar@{-}@(d,u)[ddddrr]}
\newcommand{\FERMIONddddl}{\ar@{-}@(d,u)[ddddl]}
\newcommand{\FERMIONddddll}{\ar@{-}@(d,u)[ddddll]}
\newcommand{\FERMIONddddlll}{\ar@{-}@(d,u)[ddddlll]}
\newcommand{\BOSON}{\ar@{~}}
\newtheorem{thm}{Theorem}[section]
\newtheorem{prop}[thm]{Proposition}
\newtheorem{lem}[thm]{Lemma}
\theoremstyle{definition}
\newtheorem{exa}[thm]{Example}
\newtheorem{defn}[thm]{Definition}
\newtheorem*{thmm}{Theorem}
\theoremstyle{remark}
\title{Cyclic homology arising from adjunctions}
\author{Niels Kowalzig} \address{Universit\`a di Napoli
Federico II, Dipartimento di Matematica e Applicazioni,
P.le Tecchio 80, 80125 Napoli, Italia}
\email{niels.kowalzig@unina.it}
\author{Ulrich Kr\"ahmer} \address{University of
Glasgow, School of Mathematics and Statistics, 15
University Gardens, G12 8QW Glasgow, UK}
\email{ulrich.kraehmer@glasgow.ac.uk}
\author{Paul Slevin} \address{University of
Glasgow, School of Mathematics and Statistics, 15
University Gardens, G12 8QW Glasgow, UK}
\email{p.slevin.1@research.gla.ac.uk}
\begin{document}
\begin{abstract}
Given a monad and a
comonad, one obtains a distributive law between them
from lifts of one through an adjunction for the other.
In particular, this yields for any bialgebroid the
Yetter-Drinfel'd distributive law between the comonad
given by a module coalgebra and the monad given by a
comodule algebra. It is this self-dual setting that
reproduces the cyclic homology of associative and of
Hopf algebras in the monadic framework of B\"ohm and \c
Stefan.  In fact, their approach generates two
duplicial objects and morphisms between them which are
mutual inverses if and only if the duplicial objects
are cyclic. A $2$-categorical perspective on the process
of twisting coefficients is provided and the r\^ ole of
the two notions of bimonad studied in the literature is
clarified.
\end{abstract}

\maketitle

\tableofcontents

\section{Introduction}
\subsection{Background and aim}
The Dold-Kan correspondence generalises chain complexes
in abelian categories to general simplicial objects,
and thus homological algebra to homotopical algebra.
The classical homology theories
defined by an augmented algebra (such as group, Lie
algebra, Hochschild, de Rham and Poisson homology)
become expressed as the homology of suitable comonads
$\bT$, defined via simplicial objects
$\rCC_\bT(\rN,\rM)$ obtained from the bar
construction (see, {\em e.g.},~\cite{MR1269324}).

Connes' cyclic homology created a new paradigm
of homology theories defined in terms of mixed
complexes \cite{MR883882,MR826872}. The homotopical
counterparts are cyclic
\cite{Con:CCEFE} or more generally duplicial
objects \cite{MR826872,MR885102}, and
B\"ohm and \c Stefan \cite{MR2415479} showed how
$\rCC_\bT(\rN,\rM)$ becomes duplicial in the
presence of a second comonad $\bS$ compatible in
a suitable sense with $\rN,\rM$ and $\bT$.

The aim of the present article is to study
how the cyclic homology of associative algebras and
of Hopf algebras in the original sense of Connes and
Moscovici \cite{MR1657389}
fits into this monadic formalism, extending
the construction from \cite{MR2803876}, and to clarify
the r\^ole of different notions of bimonad in this
generalisation.

\subsection{Distributive laws arising from adjunctions}
Inspired by \cite{MR3175323,MR3020336} we begin by
describing the relation of
distributive laws between (co)\-mo\-nads and of lifts of
one of them through an adjunction for the other.  In
particular, we have:

\begin{thmm}
Let $\rF \dashv \rU$ be an adjunction,
$\bB:=(\rB,\mu,\eta)$,
$\rB=\rU \rF$, and
$\bT=(\rT,\Delta,\varepsilon)$,
$\rT= \rF\rU$, be the associated (co)monads,
and  $\bS=(\rS,\Delta^\rS,\varepsilon^\rS)$ and
$\bC=(\rC,\Delta^\rC,\varepsilon^\rC)$ be comonads
with a lax isomorphism
$\Omega \colon \rC\rU \rightarrow \rU\rS$,
$$
	\xymatrix@C=5mm@R=4mm{
	\cB \ar@/^2mm/[dd]^\rU \ar@{}[dd]|{\dashv}
	\ar[rr]^{\rS} & &
	\cB \ar@/^2mm/[dd]^\rU \ar@{}[dd]|{\dashv}\\
	\\ \cA
	\ar@/^2mm/[uu]^ \rF \ar[rr]_{\rC} & & \cA
	\ar@/^2mm/[uu]^
	\rF}
$$
If $ \Lambda \colon \rF\rC
\rightarrow \rS\rF$ corresponds under the adjunction to
$\Omega\rF \circ \rC \eta \colon \rC \rightarrow
\rU\rS\rF$,
where $ \eta $ is the unit of $\rB$, then
the following are (mixed) distributive laws:
\begin{gather*}
	\xymatrix{\theta \colon \rB\rC=\rU\rF\rC
	\ar[r]^-{\rU \adj} & \rU\rS\rF \ar[r]^-{\Omega^{-1}\rF}
	& \rC\rU\rF=\rC\rB,} \\
	\xymatrix{\chi \colon
	\rT\rS=\rF\rU\rS \ar[r]^-{\rF \Omega^{-1}} & \rF\rC\rU
	\ar[r]^-{\adj \rU} & \rS\rF\rU=\rS\rT.}
\end{gather*}
\end{thmm}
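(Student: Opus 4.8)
The plan is to verify directly the defining axioms of a (mixed) distributive law for each of $\theta$ and $\chi$, the entire computation being driven by one relation together with the triangle identities of $\rF \dashv \rU$. Transposing the definition of $\Lambda$ back across the adjunction gives the fundamental identities
\[
	\rU\Lambda \circ \eta\rC = \Omega\rF \circ \rC\eta , \qquad
	\Lambda = \varepsilon\rS\rF \circ \rF\Omega\rF \circ \rF\rC\eta ,
\]
where $\varepsilon \colon \rT=\rF\rU \to \idty$ is the counit. The hypothesis that $\Omega$ is a lax isomorphism means that $\Omega$ is an invertible $2$-cell satisfying the two coherence conditions of a comonad morphism, namely $\rU\varepsilon^\rS \circ \Omega = \varepsilon^\rC\rU$ and $\rU\Delta^\rS \circ \Omega = \Omega\rS \circ \rC\Omega \circ \Delta^\rC\rU$. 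These conditions, naturality, and the two triangle identities are the only inputs.

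First I would dispatch the unit and counit axioms, which are short. For $\theta = \Omega^{-1}\rF \circ \rU\Lambda$ the unit axiom $\theta \circ \eta\rC = \rC\eta$ is immediate from the first displayed identity and $\Omega^{-1}\Omega = \idty$. The counit axiom $\varepsilon^\rC\rB \circ \theta = \rB\varepsilon^\rC$ reduces, via $\varepsilon^\rC\rU \circ \Omega^{-1} = \rU\varepsilon^\rS$, to the counit-compatibility $\varepsilon^\rS\rF \circ \Lambda = \rF\varepsilon^\rC$ of $\Lambda$; this is the mate of the counit condition for $\Omega$ and drops out of the mate formula together with the triangle identity $\varepsilon\rF \circ \rF\eta = \idty$.

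The substantive step is the comultiplication axiom $\Delta^\rC\rB \circ \theta = \rC\theta \circ \theta\rC \circ \rB\Delta^\rC$. Inverting the comultiplication condition for $\Omega$ gives $\Delta^\rC\rU \circ \Omega^{-1} = \rC\Omega^{-1} \circ \Omega^{-1}\rS \circ \rU\Delta^\rS$, which rewrites the left-hand side so that everything reduces to the comultiplication-compatibility of $\Lambda$, i.e.\ to the assertion that $(\rF,\Lambda)$ is itself a morphism of comonads $\bC \to \bS$; granting this, the two sides are matched by naturality of $\Omega^{-1}$ with respect to $\Lambda$. Establishing that $\Lambda$ is a comonad morphism, transporting the comultiplicativity of $\Omega$ across the adjunction (a doctrinal-adjunction argument, where invertibility of $\Omega$ is used), is the main obstacle and the longest diagram chase; I expect this to be cleanest as string diagrams. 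The remaining multiplicativity axiom, since $\mu = \rU\varepsilon\rF$, unwinds by naturality of $\varepsilon$ and of $\Omega$ and the triangle identity $\rU\varepsilon \circ \eta\rU = \idty$; in fact it collapses to the identity $\rS\varepsilon \circ \chi = \varepsilon\rS$, which is precisely the $\bT$-counit law for $\chi$, so the two asserted distributive laws are genuinely intertwined.

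Finally, the statement for $\chi = \Lambda\rU \circ \rF\Omega^{-1}$ is proved by the mirror-image computation, interchanging the two triangle identities and whiskering by $\rF$ and $\rU$ on the opposite sides; its counit and comultiplication laws for $\bS$ follow from the comonad-morphism property of $\Lambda$ exactly as above, while those for $\bT$ use only the adjunction. Since $\theta$ and $\chi$ are two expressions of the single mate pair $(\Lambda,\Omega)$, the two verifications share all of their nontrivial content, and it suffices to organise the bookkeeping once.
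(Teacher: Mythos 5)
Your proposal is correct and follows essentially the same route as the paper's own proof (Theorem~\ref{arise}, of which the stated theorem is a summary): the $\bB$/$\bT$-side axioms are extracted from the mate formula for $\Lambda$, naturality and the triangle identities, while the $\bC$/$\bS$-side axioms come from the lax conditions on $\Omega$ via doctrinal adjunction (the mate of a lax morphism is colax, the fact the paper cites from Leinster). Even your observation that multiplicativity of $\theta$ collapses to the $\bT$-counit law $\rS\varepsilon\circ\chi=\varepsilon\rS$ is implicitly the paper's key step: in its diagram chase this is exactly the triangle that commutes because ``both morphisms are mapped to $\Omega$ by the adjunction.''
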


See Theorem~\ref{arise} on p.~\pageref{arise} for a more
detailed statement. For Eilenberg-Moore adjunctions
($\cB=\cA^\bB$),
such lifts $\bS$ of a given comonad $\bC$ correspond bijectively
to mixed distributive laws between $\bB$ and $\bC$
(a dual statement holds for coKleisli adjunctions
$\cA=\cB_\bT$),
{\em cf.}~Section~\ref{exceptional}.

Sections~\ref{distlawssec}--\ref{duplobjsec}
contain various technical results that we
would like to add to the theory developed in
\cite{MR2415479}, while the final two
Sections~\ref{brugsec} and~\ref{wisbimonad} discuss
examples.

First, we further develop the $2$-categorical viewpoint
of \cite{MR2956318}, interpreting
the comparison functor from $\cB$ to the
Eilenberg-Moore category $\cA^\bB$ of $\bB$ as a $1$-cell in the
$2$-category of mixed distributive laws, and
the passage from mixed distributive laws between
$\bB,\bC$ to distributive laws between $\bT,\bS$ in the
case of an Eilenberg-Moore adjunction as the
application of a $2$-functor (Sections~\ref{twocatone}
and \ref{twocattwo}).

Secondly, Section~\ref{galoismapsct} describes how different
lifts $\rS,\rV$ of a given functor $\rC$ are related by
a generalised Galois map $\Gamma^{\rS,\rV}$ that will be used
in subsequent sections.

\subsection{Coefficients}
In Section~\ref{coeffsec}, we
discuss left and right $ \chi $-coalgebras $\rN$
respectively $\rM$ that serve as coefficients of cyclic
homology.

The structure of right $ \chi $-coalgebras
is easily described in terms of
$\bC$-coalgebra structures on $\rU\rM$
(Proposition~\ref{chicoalgprop}).
In the example from \cite{MR2803876} associated to a
Hopf algebroid $H$, these are simply
right $H$-modules and left $H$-comodules,
see Section~\ref{coeffsforhopf} below.

The structure of left $ \chi $-coalgebras is more
intricate. In the Hopf algebroid example, we present
a construction from Yetter-Drinfel'd modules, but we do
not have an analogue of Proposition~\ref{chicoalgprop}
which characterises left $ \chi $-coalgebras in
general. The Yetter-Drinfel'd condition is
necessary for the well-definedness of the left $ \chi
$-coalgebra structure, but not for that of the
resulting duplicial object, see again
Section~\ref{coeffsforhopf}.

The remainder of Section~\ref{coeffsec} explains the structure of
entwined $ \chi $-coalgebras, which in the Hopf algebroid case
are given by Hopf modules; these are homologically
trivial (Proposition~\ref{trivcontract}) and can be
also interpreted as $1$-cells to respectively from the
trivial distributive law (Propositions~\ref{triv1cell}
and~\ref{triv}). One reason for discussing them is to
point out that general $ \chi $-coalgebras can not be
reinterpreted as $1$-cells.

\subsection{Duplicial objects}
Section~\ref{duplobjsec} recalls the construction of
duplicial objects.  We emphasize the self-duality of the
situation by defining in fact two duplicial objects $
\rCC_\bT(\rN,\rM) $ and $ \rCC_\bS^\op(\rN,\rM) $,
arising from bar resolutions using
$\bT$ respectively $\bS$. There is a canonical pair 
of morphisms of duplicial
objects between these which are mutual inverses if and
only if the two objects are cyclic
(Proposition~\ref{cyc}).

Furthermore, we describe in Section~\ref{twistsec} the
process of twisting a pair of coefficients $\rM,\rN$
by what we called a factorisation in \cite{2}. This is
motivated by the example of the twisted cyclic homology of an
associative algebra \cite{MR1943179} and constitutes
our main application of the $2$-categorical language.

\subsection{Hopf monads}
One of our motivations in this project is to
understand how various notions of bimonads studied
in the literature lead to examples of the above theory
that generalise known ones arising from bialgebras and
bialgebroids.

All give rise to
distributive laws, but it seems to us that opmodule
adjunctions over opmonoidal adjunctions as studied
recently by Aguiar and Chase \cite{MR3020336} are the
underpinning of the cyclic homology theories from noncommutative
geometry: such adjunctions are associated to opmonoidal
adjunctions
$$
	\xymatrix{\cE \ar@/^{0.5pc}/[rr]^-\rH
\ar@{}[rr]|{\perp}&& \ar@/^0.5pc/[ll]^-\rE \cH },
$$
so here $\cH$ and $\cE$ are monoidal
categories,
$\rE$ is a strong monoidal functor and
$\rH$ is an opmonoidal functor, see Section~\ref{opmoduleadj}.
In the key example,
$\cH$ is the category ${H\cMod}$ of modules over a
bialgebroid $H$ and $\cE$ is the category of
bimodules over the base algebra $A$ of $H$. In the
special case of the cyclic homology of an associative
algebra $A$, we have $\cH=\cE$ and
$\rH=\rE=\idty$, so this adjunction is irrelevant.
Now the actual opmodule adjunctions defining cyclic
homology are
formed by an $\cH$-module category $\cB$ and an
$\cE$-module category $\cA$. In the example, one can
pick any $H$-module coalgebra $C$ and any $H$-comodule
algebra $B$, take $\cB$ to be the category ${B\cMod}$ of
$B$-modules, $\cA$ be the category $A\cMod$ of $A$-modules, and
the pair of comonads $\bS,\bC$ is given by $C \otimes_A
-$. To obtain the cyclic homology of an associative
algebra one takes $\cB$ to be the category of
$A$-bimodules (or rather right $\Ae$-modules).
Another very natural
example is given by a quantum homogeneous space
\cite{MR1710737}, where $A=k$ is commutative, $H$ is a
Hopf algebra, $B$ is a left coideal subalgebra
%for which $H$ is faithfully flat as left
%$B$-module,
and $C:=A/AB^+$ where $B^+$ is the kernel
of the counit of $H$ restricted to $B$. So here the
distributive law arises from the fact that $B$ admits a
$C$-Galois extension to a Hopf algebra $H$; following,
\emph{e.g.,}~\cite{MR1877862} we call $(B,C)$ a Doi-Koppinen
datum.

Bimonads in the sense of Mesablishvili and
Wisbauer also provide examples of the theory
considered. There is no monoidal structure required on the
categories involved, but instead we have
$\rB=\rC$, see Section~\ref{wisbimonad}.
At the end of the paper we give an example of
such a bimonad which is not related to bialgebroids and
noncommutative geometry, but indicates potential
applications of cyclic homology
in computer science.

\bigskip

\noindent {\bf Acknowledgements.}
N.~K.~acknowledges support by UniNA and Compagnia di San Paolo in the framework
of the program STAR 2013,
U.~K.~by the EPSRC grant EP/J012718/1 and
the Polish Government Grant 2012/06/M/ST1/00169,
and P.~S.~by an EPSRC Doctoral
Training Award. We would like to thank
Gabriella B\"ohm, Steve Lack, Tom Leinster, and Danny
Stevenson for
helpful suggestions and discussions.

\section{Distributive laws}\label{distlawssec}

\subsection{Distributive laws}\label{distributive} We
assume the reader is familiar with
(co)monads and their (co)al\-ge\-bras (see,
\emph{e.g.,}~\cite{MR1712872}),
but we briefly recall the notions of
(co)lax morphisms and distributive laws, see,
\emph{e.g.,}~\cite{MR2094071} for more background.

\begin{defn} Let $\bB = (\rB, \mu^\rB, \eta^\rB)$
and $\bA = (\rA,\mu^\rA,\eta^\rA)$ be monads on
categories $\cC$ respectively $\cD$, and let $\Sigma \colon
\cC \rightarrow \cD$ be a functor. A natural transformation $
\sigma \colon \rA \Sigma \rightarrow \Sigma \rB$ is called a
\emph{lax morphism of monads} if the two diagrams
$$
\xymatrix{ \rA\rA \Sigma  \ar[d]_-{\mu^\rA \Sigma}
\ar[r]^-{\rA \sigma} & \rA \Sigma \rB \ar[r]^-{\sigma
\rB} & \Sigma \rB\rB \ar[d]^-{\Sigma \mu^\rB }  \\ \rA
\Sigma \ar[rr]_-{\sigma } && \Sigma \rB }
\quad\quad\quad \xymatrix{ \Sigma \ar[r]^-{\eta^\rA
\Sigma} \ar[dr]_-{\Sigma \eta^\rB} & \rA \Sigma
\ar[d]^-{\sigma} \\ & \Sigma \rB } $$ commute. We
denote this by $ \sigma \colon \bA \Sigma \rightarrow \Sigma
\bB$.  \end{defn}

Analogously, one defines \emph{colax morphisms} $
\sigma \colon \Sigma \bA \rightarrow \bB \Sigma $,
where $ \Sigma \colon \cD \rightarrow \cC$ and $\bA,\bB$
are as before, and (co)lax morphism of comonads.

\begin{defn} A \emph{distributive law}
$ \chi \colon \bA \bB \rightarrow \bB \bA$ between monads
$\bA,\bB$ is a natural transformation $ \chi
\colon \rA\rB \rightarrow \rB\rA$ which is both a lax and a colax
morphism of monads.
\end{defn}

Analogously, one defines distributive laws between
comonads and \emph{mixed distributive law}
\cite{MR0323864} between
monads and comonads.

\subsection{The $2$-categories $\cDist$ and $\cMix$}
Since this will simplify the presentation of some
results, we turn comonad and mixed distributive laws into
the $0$-cells of $2$-categories
$\cDist$ respectively $\cMix$.  This
closely follows Street \cite{MR0299653}, see also
\cite{2}:

\begin{defn}
We denote by $\cDist$ the $2$-category whose
\begin{enumerate}
\item $0$-cells are quadruples $(\cB, \chi, \bT, \bS)$
where $\chi \colon \bT  \bS\rightarrow \bS\bT$ is a
comonad distributive law on a category $\cB$,
\item $1$-cells $(\cB, \chi, \bT, \bS) \rightarrow (\cD,
\tau, \bG, \bC)$ are triples $(\Sigma,
\sigma, \gamma)$, where $\Sigma \colon \cB \rightarrow \cD$ is
a functor, $\sigma \colon \bG\Sigma \rightarrow \Sigma
\bT$ is a lax morphism of comonads and $\gamma
\colon \Sigma \bS \rightarrow \bC \Sigma$ is a
colax morphism of comonads satisfying the Yang-Baxter
equation, {\em i.e.},
$$
	\xymatrix@R=0.5em{ & \Sigma \rT\rS \ar[r]^-{\Sigma
\chi} & \Sigma \rS\rT \ar[dr]^-{\gamma  \rT} & \\
\rG\Sigma \rS \ar[dr]_-{\rG \gamma} \ar[ur]^-{\sigma
\rS} & & & \rC \Sigma \rT \\ & \rG\rC\Sigma
\ar[r]_-{\tau \Sigma} & \rC\rG \Sigma \ar[ur]_-{\rC
\sigma} & }
$$
commutes, and
\item $2$-cells $(\Sigma,
\sigma, \gamma) \Rightarrow (\Sigma', \sigma', \gamma')$
are natural transformations $\alpha \colon \!\Sigma \rightarrow
\Sigma'$ for which the diagrams
$$
	\xymatrix{
	\rG\Sigma \ar[d]_-\sigma \ar[r]^-{\rG \alpha}
	& \rG \Sigma '
	\ar[d]^-{\sigma '} \\ \Sigma \rT \ar[r]_-{\alpha \rT } &
	\Sigma' \rT }
	\quad \quad \quad
	\xymatrix{ \Sigma \rS
	\ar[r]^-{\alpha \rS} \ar[d]_-\gamma & \Sigma' \rS
	\ar[d]^-{\gamma '} \\ \rC \Sigma \ar[r]_-{\rC \alpha}& \rC
	\Sigma' }
$$
commute.
\end{enumerate}
\end{defn}

In the sequel, we will denote $1$-cells
diagrammatically as:
$$
\xymatrix@C=1.5em{ & \cB
\ar@{}[r]^{\bT}="a" \ar@{}[l]_{\bS}="b"
\ar[d]^-{(\Sigma, \sigma, \gamma)} & \\ & \cD
\ar@{}[r]_{\bG}="c" \ar@{}[l]^{\bC}="d" &
\ar @{.}@/_/ "a";"b" |{\chi} \ar @{.}@/^/ "c";"d"
|{\tau} }
$$

In a similar way, we define the $2$-category
$\cMix$
of mixed distributive laws.

\subsection{Distributive laws arising from
adjunctions}\label{adj} The topic of this paper is
distributive laws that are compatible in a
specific way with an adjunction for one of the involved
comonads: let
$\bB=(\rB,\mu,\eta)$ be a monad on a category $\cA$.  Suppose
$$
\xymatrix{\cA \ar@/^{0.5pc}/[rr]^-\rF
\ar@{}[rr]|{\perp}&& \ar@/^0.5pc/[ll]^-\rU \cB }
$$
is an adjunction for $\bB$, that is, $\rB=\rU\rF$,
and let $\bT := (\rT,\Delta, \varepsilon )$
with $\rT:=\rF\rU$ be the induced comonad on $\cB$.

\begin{defn}\label{lift}
If $\rS \colon \cB \rightarrow \cB$ and
$\rC \colon
\cA \rightarrow \cA$ are endofunctors for which the diagram
$$
	\xymatrix{\cB \ar[d]_\rU \ar[r]^\rS & \cB \ar[d]^\rU\\
	\cA \ar[r]_\rC & \cA}
$$
commutes up to a natural
isomorphism $\Omega \colon \rC\rU \rightarrow \rU\rS$, then we
call $\rC$ an \emph{extension of} $\rS$ and $\rS$ a \emph{lift
of $\rC$ through the adjunction}.
\end{defn}

In general, any natural transformation $ \Omega \colon \rC\rU
\rightarrow \rU\rS$ uniquely determines a \emph{mate} $
\Lambda \colon \rF\rC \rightarrow \rS\rF$ that corresponds to
$$ \xymatrix{\rC \ar[r]^-{\rC \eta } & \rC\rU\rF
\ar[r]^{\Omega \rF} & \rU\rS\rF} $$ under the
adjunction \cite{MR2094071}.  The following theorem
constructs a canonical pair of distributive laws from
this mate of $ \Omega$:

\begin{thm}\label{arise}
Suppose that $\rS,\rC,$ and $\Omega$ are
as in Definition~\ref{lift}.  Then: \begin{enumerate}
\item The natural transformation
$$
	\xymatrix{\theta \colon
	\rB\rC=\rU\rF\rC \ar[r]^-{\rU \adj} & \rU\rS\rF
\ar[r]^-{\Omega^{-1}\rF} &\rC\rU\rF=\rC\rB}
$$
is a lax
endomorphism of the monad $\bB$.
\item The
natural transformation
$$ \xymatrix{\chi \colon
\rT\rS=\rF\rU\rS \ar[r]^-{\rF \Omega^{-1}} & \rF\rC\rU
\ar[r]^-{\adj \rU} & \rS\rF\rU=\rS\rT} $$ is a lax
endomorphism of the comonad $\bT$.  \item The lax
morphism $\theta$ is unique such that the following
diagram commutes:
$$
	\xymatrix{ \rU\rF\rC\rU
	\ar[rr]^-{\theta \rU} \ar[d]_-{\rU\rF \Omega}&
	&
	\rC\rU\rF\rU \ar[d]^-{\rC\rU \epsilon} \\
	\rU\rF\rU\rS
	\ar[r]_-{~\rU \epsilon \rS}& \rU\rS \ar[r]_-{\Omega^{-1}}
&	\rC\rU }
$$

\item The lax morphism $\chi$ is unique such that the
following diagram commutes:
$$
	\xymatrix{
	\rU\rS\ar[r]^-{\Omega^{-1}} \ar[d]_-{\eta \rU\rS} &
\rC\rU \ar[r]^-{C \eta \rU} &\rC\rU\rF\rU \ar[d]^-{\Omega
\rF\rU} \\ \rU\rF\rU\rS \ar[rr]_-{\rU \chi} &&
\rU\rS\rF\rU } $$ \item If $\rC$ is part of a comonad
$\bC=(\rC,\Delta^\rC,\varepsilon^\rC)$ and $\rS$ is
part of a comonad $\bS=(\rS,\Delta^\rS,\varepsilon^\rS)$
and $ \Omega $ is a
lax morphism of comonads, then $\theta$ is a mixed
distributive law and $ \chi $ is a comonad distributive
law.
\end{enumerate}
\end{thm}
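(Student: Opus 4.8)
The plan is to treat the two distributive laws in parallel, exploiting the evident $\rF\leftrightarrow\rU$, $\eta\leftrightarrow\epsilon$, monad$\leftrightarrow$comonad symmetry between the constructions of $\theta$ and $\chi$: once parts~(1) and~(3) are established for $\theta$, parts~(2) and~(4) for $\chi$ follow by the dual argument. Throughout I would use the standard descriptions of the adjunction-induced structures, $\mu=\rU\epsilon\rF$ and $\Delta=\rF\eta\rU$, the two triangle identities $\rU\epsilon\circ\eta\rU=\idty_\rU$ and $\epsilon\rF\circ\rF\eta=\idty_\rF$, and the single load-bearing consequence of $\adj$ being the mate of $\Omega$, namely $\rU\adj\circ\eta\rC=\Omega\rF\circ\rC\eta$ (equivalently $\adj=\epsilon\rS\rF\circ\rF\Omega\rF\circ\rF\rC\eta$).

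For the unit axiom in~(1) I would simply substitute the mate relation: $\theta\circ\eta\rC=\Omega^{-1}\rF\circ\rU\adj\circ\eta\rC=\Omega^{-1}\rF\circ\Omega\rF\circ\rC\eta=\rC\eta$, so no real work is needed there. The multiplication axiom is the computational heart: after substituting $\mu=\rU\epsilon\rF$ into both $\theta\circ\mu\rC$ and $\rC\mu\circ\theta\rB\circ\rB\theta$, one has to push the two interior copies of $\Omega^{-1}$ and $\adj$ past the counit using naturality of $\Omega^{-1}$ against $\epsilon$ and naturality of $\adj$, and then collapse a unit--counit composite via a triangle identity. I expect this diagram --- where $\theta$ occurs twice and the whiskerings must be tracked precisely --- to be the main obstacle; everything else is comparatively formal.

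For~(3) I would first check that $\theta$ satisfies the stated square, which reduces (after cancelling $\Omega^{-1}$ and applying naturality of $\Omega^{-1}$ against $\epsilon$) to the identity $\rS\epsilon\circ\adj\rU=\epsilon\rS\circ\rF\Omega$; this in turn follows from naturality of $\Omega$ and of $\epsilon$ together with the triangle identity $\rU\epsilon\circ\eta\rU=\idty_\rU$. For uniqueness, I would exploit that the square fixes $\rC\rU\epsilon\circ\vartheta\rU$ for any candidate $\vartheta$; naturality of $\vartheta$ at $\eta_X\colon X\to\rU\rF X$ together with the triangle identity $\epsilon\rF\circ\rF\eta=\idty_\rF$ (which makes $\rC\rU\rF\eta_X$ split monic with retraction $\rC\rU\epsilon_{\rF X}$) yields $\vartheta_X=(\rC\rU\epsilon\circ\vartheta\rU)_{\rF X}\circ\rU\rF\rC\eta_X$, so $\vartheta$ is forced. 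Parts~(2) and~(4) are the formal $\rF\leftrightarrow\rU$ duals, now with $\mu$ replaced by $\Delta=\rF\eta\rU$ and the roles of the two triangle identities interchanged.

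Finally, for~(5) the extra input is that $\Omega$ is a lax morphism of comonads, hence compatible with $\epsilon^\rC,\epsilon^\rS$ and with $\Delta^\rC,\Delta^\rS$; either by appealing to the mate calculus (which converts $\Omega$'s comonad-morphism structure into one for $\adj$) or by substituting these compatibilities directly, I would proceed as follows. Since parts~(1) and~(2) already supply the monad- respectively comonad-$\bT$ halves of the (mixed) distributive law axioms, it remains to verify that $\theta$ is a colax morphism of $\bC$ and that $\chi$ is a colax morphism of $\bS$. I would obtain these from the definitions $\theta=\Omega^{-1}\rF\circ\rU\adj$ and $\chi=\adj\rU\circ\rF\Omega^{-1}$, simplifying with naturality and the triangle identities exactly as before; the counit compatibilities are immediate, while the comultiplication compatibilities again involve two interior copies of the relevant transformation and are the only place in~(5) demanding care.
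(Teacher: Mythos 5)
Your proposal is correct and takes essentially the same route as the paper's proof: both verify the lax-endomorphism axioms from the two equivalent forms of the mate relation ($\rU\adj\circ\eta\rC=\Omega\rF\circ\rC\eta$ and $\rS\epsilon\circ\adj\rU=\epsilon\rS\circ\rF\Omega$) together with naturality and the triangle identities, prove uniqueness in (3) and (4) by composing with the (co)unit and using naturality of the candidate morphism, treat (2) and (4) as the mirror-image computation (the paper likewise only says ``similar''), and reduce (5) to the fact that the mate of a lax morphism of comonads is colax. The only cosmetic difference is that the paper deduces the commutativity of the square in (3) from the counit axiom of $\chi$ already established in (2), whereas you verify the underlying identity $\rS\epsilon\circ\adj\rU=\epsilon\rS\circ\rF\Omega$ directly from naturality and a triangle identity.
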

\begin{proof}
%% Cite Plotkin
To prove (1), observe that the unit compatibility
condition for $\theta$ is commutativity of the diagram
$$ \xymatrix{ \rU\rF\rC \ar[r]^-{\rU \Lambda} &
\rU\rS\rF \ar[d]^-{\Omega^{-1} \rF} \\ \rC \ar[u]^-{\eta
\rC} \ar[r]_-{\rC \eta} &\rC\rU\rF }
$$
This diagram commutes if and only if the same diagram post-composed
with $\Omega \rF$ commutes, which is exactly the fact
that $\Omega \rF \circ \rC\eta$ corresponds to ${\adj}$
under the adjunction. The multiplication compatibility
condition is given by commutativity of
$$
	\xymatrix{
	\rB\rB\rC\ar[d]_-{\rB \theta}
	\ar[r]^-{\mu\rC} & \rB\rC
	\ar[r]^-{\theta} &\rC\rB \\ \rB\rC\rB
	\ar[rr]_-{\theta{\rB}} &&
	\rC\rB\rB \ar[u]_-{\rC \mu} }
$$
which can be written as the outside of the diagram
$$
\xymatrix{ \rU\rF\rU\rF\rC \ar[r]^-{\rU \epsilon
\rF\rC}  \ar[d]_-{\rU\rF\rU \Lambda} & \rU\rF\rC
\ar[r]^-{\rU \Lambda} & \rU\rS\rF \ar[r]^-{\Omega^{-1}
\rF} &\rC\rU\rF \\ \rU\rF\rU\rS\rF\ar[d]_-{\rU\rF
\Omega^{-1} \rF} & & &  \\ \rU\rF\rC\rU\rF
\ar[rr]_-{\rU \Lambda \rU\rF} && \rU\rS\rF\rU\rF
\ar[uu]^-{\rU\rS \epsilon \rF} \ar[r]_-{\Omega^{-1} \rF
\rU \rF } &\rC\rU\rF\rU\rF \ar[uu]_-{\rC\rU \epsilon \rF}
}
$$
which will commute if both inner squares commute.
The right-hand square commutes by naturality of
$\Omega$. The left-hand square is obtained by applying
$\rU$ to the outside of the diagram $$ \xymatrix{
\rF\rU\rF\rC\ar[d]_-{\rF\rU \Lambda} \ar[r]^-{\epsilon
\rF\rC} & \rF\rC \ar[r]^-{\Lambda} & S\rF \\
\rF\rU\rS\rF\ar@{=}[r] \ar[d]_-{\rF \Omega^{-1} \rF}  &
\rF\rU\rS\rF \ar[ur]_-{\epsilon \rS\rF} &  \\
\rF\rC\rU\rF \ar[ur]_-{\rF \Omega \rF}
\ar[rr]_-{\Lambda \rU\rF} & & \rS\rF\rU\rF \ar[uu]_-{S
\epsilon \rF} } $$ which commutes: the upper shape
commutes by naturality of $\epsilon$, the left-hand
triangle clearly commutes, and the right-hand triangle
commutes since both morphisms are mapped to $\Omega$ by
the adjunction.

The proof for part (2) is similar to that of part (1).
For part (3), observe that the counit condition for
$\chi$ amounts to the commutativity of the diagram:
$$
	\xymatrix{ \rF\rU\rS \ar[r]^-{\rF\Omega^{-1}}
	\ar[d]_-{\epsilon \rS} &
	\rF\rC\rU \ar[d]^-{\Lambda \rU}
	\\
	\rS & \rS\rF\rU\ar[l]^-{\rS \epsilon} }
$$
If we
precompose this with $\rF \Omega^{-1}$ and then apply
$\rU$, we get the left-hand square of the diagram $$
\xymatrix@C=3em{ \rU\rF\rC\rU \ar[d]_-{\rU\rF \Omega}
\ar[r]^-{\rU \Lambda \rU} & \rU\rS\rF\rU
\ar[d]^-{\rU\rS \epsilon} \ar[r]^-{\Omega^{-1} \rF\rU}
&\rC\rU\rF\rU \ar[d]^-{\rC\rU \epsilon} \\ \rU\rF\rU\rS
\ar[r]_-{\rU \epsilon \rS} & \rU\rS \ar[r]_-{\Omega^{-1}}
&\rC\rU } $$ The right-hand square commutes by
naturality of $\Omega^{-1}$, so the outer square
commutes too, which is exactly the condition in part
(3).  Suppose that $\theta'$ is another lax morphism
which makes the diagram commute. Consider the diagram:
$$
\xymatrix{ \rU\rF\rC \ar[rr]^-{\theta'}
\ar[d]_-{\rU\rF\rC \eta} &&\rC\rU\rF \ar[d]^-{\rC\rU\rF
\eta}  \ar@{=}@/^4pc/[dd]\\ \rU\rF\rC\rU\rF
\ar[d]_-{\rU\rF \Omega \rF} \ar[rr]^-{\theta' \rU\rF}
&&\rC\rU\rF\rU\rF \ar[d]^-{\rC\rU \epsilon \rF} \\
\rU\rF\rU\rS\rF \ar[r]_-{\rU \epsilon \rS\rF} & \rU\rS\rF
\ar[r]_-{\Omega^{-1} \rF} & \rC\rU\rF }
$$
The
rightmost shape commutes by one of the triangle
identities for the adjunction, the bottom square
commutes by hypothesis, and the upper square commutes
by naturality of $\theta'$. Therefore, the outer
diagram commutes which says exactly that $$ \theta' =
\Omega^{-1} \rF \circ \rU(\epsilon \rS\rF \circ \rF
\Omega \rF \circ \rF\rC \eta) = \Omega^{-1} \rF \circ
\rU \Lambda = \theta.  $$

For part (4), the displayed diagram commutes for
similar reasons to the diagram in part (3). Let $\chi'$
be another lax morphism such that the diagram commutes.
Going round the diagram clockwise shows that $\chi$ and
$\chi'$ are mapped to the same morphism under the
adjunction, so $\chi = \chi'$.

For part (5), we will show that $\theta$ is a mixed
distributive law, and remark that the proof that $\chi$
is a comonad distributive law is similar. Consider the
following diagram:
$$
\xymatrix@R=4em{ & \rU\rF &\\
\rU\rF\rC \ar[ur]^-{\rU\rF \epsilon^\rC}
\ar[r]_-{\theta} &\rC\rU\rF \ar[u]_-(.35){\epsilon^\rC
\rU\rF} \ar[r]_-{\Omega \rF} & \rU\rS\rF \ar[ul]_-{\rU
\epsilon^\rS \rF} }
$$
The left hand triangle, which is the
counit compatibility condition for $\theta$, will
commute if the right-hand and outer triangle commute.
The right-hand triangle commutes because $\Omega$ is
lax by hypothesis. The outer triangle is just $\rU$
applied to the diagram
$$
	\xymatrix{ \rF\rC
	\ar[d]_-{\Lambda} \ar[r]^-{\rF \epsilon^\rC} & \rF \\
	\rS\rF \ar[ur]_-{\epsilon ^\rS \rF} }
$$
This commutes since the mate of a lax morphism is always colax
\cite[p180]{MR2094071}. By a similar argument, $\theta$ is
compatible with the comultiplication.  \end{proof}

\begin{defn}
A comonad distributive law $ \chi $ as in
Theorem~\ref{arise} is said to \emph{arise from the
adjunction $\rF \dashv \rU$}.
\end{defn}

\begin{exa}\label{banal}
A trivial example
which will nevertheless play a r\^ ole below is
the case where $\rC=\rB$, $\rS=\rT$, and $ \Omega =
{\idty} $. In this case,
$ \chi $ and $ \theta$ are given by
\begin{align*}
& \xymatrix{\rT\rT=\rF\rU\rF\rU \ar[r]^-{\varepsilon
	\rF\rU} & \rF\rU \ar[r]^-{\rF \eta \rU} &
\rF\rU\rF\rU=\rT\rT,}\\
&
	\xymatrix{\rB\rB=\rU\rF\rU\rF \ar[r]^-{\rU \varepsilon
\rF} & \rU\rF \ar[r]^-{\rU\rF \eta } &
\rU\rF\rU\rF=\rB\rB.}
\end{align*}
\end{exa}

\subsection{The Eilenberg-Moore and the coKleisli
cases}\label{exceptional}
Functors do not
necessarily lift respectively extend through an
adjunction (for example, the functor on $\cSet$
which assigns the empty set to each set does not lift
to ${k\cMod}$), and if they do, they may not do so
uniquely.  Theorem~\ref{arise} says only that once a
lift respectively extension is chosen, there
is a unique compatible pair of lax endomorphisms $
\theta $ and $\chi$.

One extremal situation in which specifying a lax
endomorphism $\theta \colon \rC \bB \rightarrow
\bB \rC$ uniquely determines a lift $\rS$ of $\rC$ is
when $\cB$ is the Eilenberg-Moore category $\cA^\bB$.
In this case, $\rS$ is defined on objects
$(X,\alpha)$ by $\rS(X,\alpha)=(\rC X,\rC \alpha \circ \theta
X)$. Using Theorem~\ref{arise} (with $\Omega =
{\idty} $), one recovers $ \theta $, see,
\emph{e.g.},~\cite{MR2614998,MR0390018}.

Dually, one can take
$\cA$ to be the coKleisli category $\cB_\bT$ in
which case a lax endomorphism $ \chi $ yields an extension
$\rC$ of a functor $\rS$. This means that every comonad
distributive law and every mixed distributive law
arises from an adjunction.

\subsection{The comparison functor is a
$1$-cell}\label{twocatone}

Let $\rF \dashv \rU$ be an adjunction and let $\bS$
be the lift of a comonad $\bC$ through the
adjunction via $\Omega$ as in Section~\ref{adj}.
Suppose we have a $1$-cell
$$
\xymatrix@C=1.5em{ & \cA
\ar@{}[r]^{\bB}="a" \ar@{}[l]_{\bC}="b"
\ar[d]^-{(\Sigma, \sigma, \gamma)}& \\ & \cD
\ar@{}[r]_{\bA}="c" \ar@{}[l]^{\bD}="d" &
\ar @{.}@/_/ "a";"b" |{\theta} \ar @{.}@/^/ "c";"d"
|{\psi} } $$ in the $2$-category
$\cMix$. Let
us denote with tildes the lifts of $\bA,\bD$, and $\psi$
to the Eilenberg-Moore category $\cD^\bA$
outlined in Section~\ref{exceptional}. This gives rise
to a $1$-cell
$$
	\xymatrix@C=1.5em{ & \cB
	\ar@{}[r]^{\bT}="a" \ar@{}[l]_{\bS}="b"
	\ar[d]^-{(\tilde\Sigma, \tilde\sigma, \tilde\gamma)}&
\\ & \cD^\bA \ar@{}[r]_{\tilde{\bA}}="c"
\ar@{}[l]^{\tilde{\bD}}="d" & \ar @{.}@/_/
"a";"b" |{\chi} \ar @{.}@/^/ "c";"d" |{\tilde\psi} }
$$
in $\cDist$,
where $\tilde\Sigma$ is defined on objects by
$$
\tilde\Sigma X = \left( \Sigma \rU X,\xymatrix{ \rA
\Sigma \rU X \ar[r]^-{\sigma \rU X} & \Sigma \rB \rU X
= \Sigma \rU \rF \rU X \ar[r]^-{\Sigma \rU \epsilon X}
& \Sigma \rU X} \right) $$ and on morphisms by
$\tilde\Sigma f = \Sigma \rU f$. The lax morphism
$\tilde\sigma$ is defined by $$ \xymatrix{ \rA \Sigma \rU
X \ar[r]^-{\sigma \rU X} & \Sigma \rB \rU X = \Sigma
\rU \rT X}
$$
and the colax morphism $\tilde\gamma$ is
defined by
$$
	\xymatrix{ \Sigma \rU \rS X \ar[r]^-{\Sigma
\Omega^{-1} X} & \Sigma \rC \rU X \ar[r]^-{\gamma \rU X}
& \rD \Sigma \rU X}
$$

In the case that $\cA= \cD$, $\bB = \bA$,
$\bC = \bD$, $\psi = \theta$ and $(\Sigma,
\sigma, \gamma) = ({\idty}, {\idty},
{\idty})$ is the trivial $1$-cell, we get that
$\tilde\Sigma$ is the \emph{comparison functor}
$\cB
\rightarrow \cA^\bB = \cD^\bA$.

\subsection{Interpretation as a
$2$-functor}\label{twocattwo}
Consider the
case that $\cB = \cA^\bB$, $\bT =
\tilde{\bB}$, $\bS = \tilde{\bC}$,
and $\chi = \tilde\theta$. Since any $2$-cell $\alpha
\colon \Sigma \rightarrow \Sigma'$ lifts to a natural
transformation $\tilde\alpha \colon \tilde\Sigma \rightarrow
\tilde \Sigma'$, we can encode the above construction
as the action of a $2$-functor:
\begin{prop}
The assignment
$$ \xymatrix@C=1.5em{ & \cA
\ar@{}[r]^{\bB}="a" \ar@{}[l]_{\bC}="b"
\ar[d]^-{(\Sigma, \sigma, \gamma)}& &
\ar@{}[d]|{\lmapsto} &  \cA^\bB
\ar@{}[r]^{\tilde{\bB}}="e"
\ar@{}[l]_{\tilde{\bC}}="f"
\ar[d]^-{(\tilde\Sigma, \tilde\sigma, \tilde\gamma)}&
\\ & \cD \ar@{}[r]_{\bA}="c" \ar@{}[l]^{\bD}
="d" & & &   \cD^\bA \ar@{}[r]_{\tilde{\bA}}="g" \ar@{}[l]^{\tilde{\bD}}="h" & \ar
@{.}@/_/ "a";"b" |{\theta} \ar @{.}@/^/ "c";"d" |{\psi}
\ar @{.}@/_/ "e";"f" |{\tilde\theta} \ar @{.}@/^/
"g";"h" |{\tilde\psi} } \quad\quad\quad\quad
\xymatrix@C=0.5em{ (\Sigma, \sigma, \gamma)
\ar@{=>}[d]^-{\alpha} & \ar@{}[d]|\lmapsto &
(\tilde\Sigma, \tilde\sigma, \tilde\gamma)
\ar@{=>}[d]^-{\tilde\alpha}\\ (\Sigma', \sigma',
\gamma') & & (\tilde\Sigma',
\tilde\sigma',\tilde\gamma') } $$ is a $2$-functor $i
\colon \cMix
\rightarrow \cDist$.
\end{prop}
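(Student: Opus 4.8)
The plan is to check the defining clauses of a (strict) $2$-functor one at a time, leaning on the constructions already assembled in Sections~\ref{exceptional} and~\ref{twocatone}. On $0$-cells, a mixed distributive law $\theta\colon\bB\bC\to\bC\bB$ on $\cA$ is sent to $(\cA^\bB,\tilde\theta,\tilde\bB,\tilde\bC)$, and the only thing to justify is that $\tilde\theta=\chi$ really is a comonad distributive law: this is exactly Theorem~\ref{arise}(5) applied to the Eilenberg-Moore adjunction $\rF\dashv\rU$ with $\Omega=\idty$, as recalled in Section~\ref{exceptional}. On $1$-cells, the assignment $(\Sigma,\sigma,\gamma)\mapsto(\tilde\Sigma,\tilde\sigma,\tilde\gamma)$ is precisely the construction of Section~\ref{twocatone} specialised to $\cB=\cA^\bB$, $\bT=\tilde\bB$, $\bS=\tilde\bC$, $\chi=\tilde\theta$; there $\tilde\sigma$ is shown to be lax, $\tilde\gamma$ colax, and the Yang--Baxter equation is verified, so $(\tilde\Sigma,\tilde\sigma,\tilde\gamma)$ is a genuine $1$-cell of $\cDist$, the specialisation being immediate.

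Next I would treat $2$-cells. The key observation is that the lift $\tilde\alpha$ of a $2$-cell $\alpha\colon\Sigma\to\Sigma'$ is nothing but the whiskering $\alpha\rU$, with underlying components $\tilde\alpha_X=\alpha_{\rU X}$. First one checks that each such component is a morphism in $\cD^\bA$, i.e. intertwines the lifted actions defining $\tilde\Sigma X$ and $\tilde\Sigma' X$; this reduces to naturality of $\alpha$ against $\rU\epsilon X$ together with the $\cMix$-square relating $\alpha$ to $\sigma,\sigma'$. The two square conditions making $\tilde\alpha$ a $2$-cell of $\cDist$ --- compatibility with $\tilde\sigma,\tilde\sigma'$ and with $\tilde\gamma,\tilde\gamma'$ --- then follow by whiskering the corresponding axioms for $\alpha$ in $\cMix$ with $\rU$ and inserting the defining formulas for $\tilde\sigma$ and $\tilde\gamma$ (so that the extra factors $\Sigma\Omega^{-1}$ and $\rU\epsilon$ are absorbed using naturality of $\Omega$ and of $\alpha$).

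With well-definedness established, functoriality remains. Preservation of identity $2$-cells and of vertical composition is immediate once $\tilde\alpha=\alpha\rU$, since whiskering with a fixed functor is strictly functorial; and a short computation gives $\widetilde{\idty_\cA}=\idty_{\cA^\bB}$, using that for the identity $1$-cell the structure map $\rU\epsilon X\circ\sigma\,\rU X$ collapses to the $\bB$-action of $X$ itself, so $\tilde\Sigma X=X$ on the nose. The substantial point, and the step I expect to be the main obstacle, is preservation of horizontal composition of $1$-cells: for composable $(\Sigma,\sigma,\gamma)$ and $(\Sigma'',\sigma'',\gamma'')$ I must prove $\widetilde{\Sigma''\Sigma}=\widetilde{\Sigma''}\,\tilde\Sigma$ as $1$-cells of $\cDist$. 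The underlying functors agree because the forgetful functors satisfy $\rU'\tilde\Sigma=\Sigma\rU$ and hence $\rU''\,\widetilde{\Sigma''}\,\tilde\Sigma=\Sigma''\Sigma\rU=\rU''\,\widetilde{\Sigma''\Sigma}$; but matching the lifted actions on underlying objects, and then verifying that the composite lax and colax structure maps $\tilde\sigma$ and $\tilde\gamma$ coincide, requires a diagram chase interleaving the counit $\epsilon$, the isomorphism $\Omega$, and the naturality of $\sigma'',\gamma''$. This is where the interaction between the Eilenberg-Moore lifting and composition genuinely has to be unwound; once it is, preservation of horizontal composition of $2$-cells and the middle-four interchange follow formally from the whiskering description $\tilde\alpha=\alpha\rU$.
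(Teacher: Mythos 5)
Your proposal is correct and is essentially the proof the paper leaves implicit: the paper states this proposition without proof, taking it to follow from the constructions of Sections~\ref{exceptional} and~\ref{twocatone}, and your plan fills in exactly those verifications, the key observation being that morphisms in Eilenberg--Moore categories are determined by their underlying morphisms, so that $\tilde\alpha=\alpha\rU$ renders all $2$-cell and functoriality checks formal. One remark on emphasis: the step you single out as the main obstacle, preservation of horizontal composition of $1$-cells, is in fact immediate --- unwinding the definitions shows that both $\widetilde{\Sigma''\Sigma}$ and $\widetilde{\Sigma''}\,\tilde\Sigma$ send $X$ to the object with underlying $\Sigma''\Sigma\rU X$ and with literally the same structure map $\Sigma''\Sigma\rU\epsilon X\circ\Sigma''\sigma\rU X\circ\sigma''\Sigma\rU X$ (and likewise the lax and colax structures coincide on underlying morphisms), with no use of naturality of $\sigma'',\gamma''$ and no appearance of $\Omega$, since in this specialisation $\Omega=\idty$.
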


Analogously, we obtain a $2$-functor $ j \colon
\cDist \rightarrow
\cMix$ by taking extensions to coKleisli
categories.  It is
those distributive laws in the image of the $2$-functor
$i$ that are the main object of study in this paper.

\subsection{The Galois map}\label{galoismapsct}
Theorem~\ref{arise} yields comonad distributive laws
from lifts through an adjunction, and different lifts
produce different distributive laws. Here we describe
how these are related in terms of suitable
generalisations of the Galois map from the theory of
Hopf algebras.

\begin{defn}
If $\rS,\rV \colon \cB \rightarrow \cB$
are lifts of $\rC \colon \cA
\rightarrow \cA$
through $\rF \dashv \rU$ with
isomorphisms $\Omega \colon\rC\rU \rightarrow \rU\rS$ and
$\Phi \colon\rC\rU \rightarrow \rU\rV$, we define a natural
isomorphism
$$
	\Gamma^{\rS,\rV} \colon \cB(\rF-, \rS-) \rightarrow
\cB(\rF-, \rV-)
$$
of functors $\cA^\op \times \cB \rightarrow \cSet$
on components by the composition
$$
	\xymatrix{ \cB (\rF X, \rS Y) \ar[r]%^-{\psi_{X,\rS Y}}
	&
	\cA(X, \rU\rS Y) \ar[r] &
	\cA(X, \rU\rV Y) \ar[r]%^-{\psi_{X,\rV Y}^{-1}}
	& \cB
	(\rF X, \rV Y), }
$$
where the middle map is induced by
$ \Phi_Y \circ \Omega^{-1}_Y \colon \rU\rS Y \rightarrow
\rU\rV Y$ and the outer ones are induced by the adjunction
$\rF \dashv \rU$.
We call $ \Gamma^{\rS,\rV}$ the \emph{Galois
map} of the pair $(\rS,\rV)$.  \end{defn}

The following properties
are easy consequences of the definition:

\begin{prop}\label{sunshines}
Let $\rS$ and $\rV$ be two lifts
of an endofunctor $\rC$ through an adjunction $\rF \dashv
\rU$. Then:
\begin{enumerate}
\item The inverse of
$\Gamma^{\rS,\rV}$ is given by $\Gamma^{\rV,\rS}$.
\item The Galois map $\Gamma^{\rS,\rV}$ maps a morphism $f
\colon \rF X \rightarrow \rS Y$ to
$$
	\xymatrix{ \rF X
\ar[r]^-{\rF \eta X} & \rF\rU\rF X \ar[r]^-{\rF\rU f} &
\rF\rU\rS Y \ar[rr]^{\rF(\Phi_Y \circ \Omega^{-1}_Y)} &
& \rF\rU\rV Y \ar[r]^-{\epsilon \rV Y} & \rV Y. }
$$
\item If $\chi^\rS$ and $ \chi^\rV$ denote the lax morphisms
determined by the two lifts, then
$$
	\Gamma^{\rS,\rV}
	(\chi^\rS ) = \chi^\rV.
$$
\end{enumerate}

\end{prop}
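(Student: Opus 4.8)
The plan is to transport everything to the $\cA$-side of the adjunction, where by construction the Galois map is postcomposition with $\Phi_Y\circ\Omega^{-1}_Y$, and then to feed in the explicit descriptions of $\chi^\rS,\chi^\rV$ furnished by Theorem~\ref{arise}. Throughout I write $\overline{g}=\rU g\circ\eta X$ for the adjunct of $g\colon\rF X\to Z$, recalling that $g\mapsto\overline{g}$ is a bijection $\cB(\rF X,Z)\cong\cA(X,\rU Z)$ with inverse $h\mapsto\epsilon Z\circ\rF h$. For part~(1) I would observe that $\Gamma^{\rV,\rS}$ is assembled from the same outer adjunction isomorphisms as $\Gamma^{\rS,\rV}$, but with middle map $\Omega_Y\circ\Phi^{-1}_Y=(\Phi_Y\circ\Omega^{-1}_Y)^{-1}$. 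Composing the two in either order, the outer isomorphisms cancel in pairs and the two middle maps compose to $\idty$, so $\Gamma^{\rV,\rS}\circ\Gamma^{\rS,\rV}=\idty$ and symmetrically.

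For part~(2) I would merely unwind the three constituent maps. The first sends $f\colon\rF X\to\rS Y$ to $\overline{f}=\rU f\circ\eta X$; the second postcomposes with $\Phi_Y\circ\Omega^{-1}_Y$; the third applies the inverse adjunct, postcomposing $\rF(-)$ with $\epsilon\rV Y$. Expanding $\rF\bigl((\Phi_Y\circ\Omega^{-1}_Y)\circ\rU f\circ\eta X\bigr)$ by functoriality of $\rF$ yields exactly the asserted composite
\[ \epsilon\rV Y\circ\rF(\Phi_Y\circ\Omega^{-1}_Y)\circ\rF\rU f\circ\rF\eta X. \]

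Part~(3) is the substantive one, and I would prove it on adjuncts. Transposing $\chi^\rS\colon\rF\rU\rS\to\rS\rF\rU$ across the adjunction gives $\overline{\chi^\rS}=\Omega\rF\rU\circ\rC\eta\rU\circ\Omega^{-1}$ --- this is precisely the square characterising $\chi^\rS$ in Theorem~\ref{arise}~(4) --- and likewise $\overline{\chi^\rV}=\Phi\rF\rU\circ\rC\eta\rU\circ\Phi^{-1}$. By part~(2), on adjuncts $\Gamma^{\rS,\rV}$ is postcomposition with $\Phi\rF\rU\circ\Omega^{-1}\rF\rU$, whence
\[ \overline{\Gamma^{\rS,\rV}(\chi^\rS)}=(\Phi\rF\rU\circ\Omega^{-1}\rF\rU)\circ\Omega\rF\rU\circ\rC\eta\rU\circ\Omega^{-1}=\Phi\rF\rU\circ\rC\eta\rU\circ\Omega^{-1}, \]
the factor $\Omega^{-1}\rF\rU\circ\Omega\rF\rU$ collapsing by $\Omega^{-1}\circ\Omega=\idty$. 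This coincides with $\overline{\chi^\rV}$ after the canonical identification $\rU\rS\cong\rC\rU\cong\rU\rV$ of the source functors induced by $\Omega$ and $\Phi$; transposing back then gives the claim.

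The step I expect to be delicate is exactly this last identification. The lift $\rS$ occurs in \emph{both} the source $\rT\rS$ and the target $\rS\rT$ of $\chi^\rS$, whereas $\Gamma^{\rS,\rV}$ only converts the target occurrence; the residual $\Omega^{-1}$ in the source must be matched against the $\Phi^{-1}$ of $\chi^\rV$ using naturality of $\Omega$ and $\Phi$. Concretely one finds $\chi^\rV=\Gamma^{\rS,\rV}(\chi^\rS)\circ\rF(\Omega\circ\Phi^{-1})$, so the equality $\Gamma^{\rS,\rV}(\chi^\rS)=\chi^\rV$ is to be read under the identification of $\rF\rU\rS$ with $\rF\rU\rV$ built from $\Omega$ and $\Phi$. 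I would make this explicit, or equivalently check that $\Gamma^{\rS,\rV}(\chi^\rS)$, once so reindexed, satisfies the defining diagram of Theorem~\ref{arise}~(4) for the lift $\rV$ and conclude by the uniqueness stated there.
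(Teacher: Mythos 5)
Your proposal is correct, and there is little in the paper to compare it against: the authors state Proposition~\ref{sunshines} with no proof at all, only the remark that these are ``easy consequences of the definition''. Your treatment of (1) and (2) is the intended unwinding, and proving (3) on adjuncts via the characterisation of $\chi$ in Theorem~\ref{arise}(4) is the natural way to fill in what the authors left implicit. Where you add genuine value is your final paragraph: you are right that (3) as literally printed involves a type mismatch --- $\Gamma^{\rS,\rV}(\chi^\rS)$ is a natural transformation $\rF\rU\rS \rightarrow \rV\rF\rU$, whereas $\chi^\rV$ has domain $\rF\rU\rV$, because the Galois map converts only the codomain occurrence of $\rS$ and leaves the one inside the domain untouched. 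Your identity $\chi^\rV = \Gamma^{\rS,\rV}(\chi^\rS)\circ\rF(\Omega\circ\Phi^{-1})$, obtained by comparing the adjuncts $\Phi\rF\rU\circ\rC\eta\rU\circ\Omega^{-1}$ and $\Phi\rF\rU\circ\rC\eta\rU\circ\Phi^{-1}$, is the precise form of the assertion, and the paper's statement must indeed be read modulo the identification of $\rF\rU\rS$ with $\rF\rU\rV$ induced by $\Omega$ and $\Phi$. One caution on your suggested alternative ending: invoking the uniqueness clause of Theorem~\ref{arise}(4) presupposes that the reindexed morphism is a lax morphism of comonads, which would require a separate check; your primary argument --- equal adjuncts give equal morphisms, by injectivity of the adjunction bijection --- avoids this and should be kept as the main line.
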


So, in the applications of Theorem~\ref{arise}, all
distributive laws obtained from different lifts of a
given comonad through an adjunction are obtained from
each other by application of the appropriate Galois
map.

The Galois map also relates different lifts of $\rB$
itself: recall the trivial Example~\ref{banal} of
Theorem~\ref{arise}, where $\rC=\rB$ and $\rS=\rT$, and let
$\rV$ be any other lift of $\rB$ through the
adjunction. By taking $X$ to be $\rU Y$ for an object
$Y$ of $\cB$, one obtains a Galois map $ \Gamma^{\rT,\rV}
\colon \cB (\rT-,\rT-) \rightarrow \cB(\rT-,\rV-) $ that we can
evaluate on $ {\idty} \colon \rT Y \rightarrow \rT Y$, which
produces a natural transformation $\rT \rightarrow \rV$
that we denote by slight abuse of notation by $
\Gamma^{\rT,\rV}$ as well.

Adapting \cite[Definition 1.3]{MR2651345}, we define:
\begin{defn} We say that $\rF$ is \emph{$\rV$-Galois}
if $$ \xymatrix{ \Gamma^{\rT,\rV} \colon \rT=\rF\rU
\ar[r]^-{\rF \eta \rU} & \rF\rU\rF\rU = \rF\rU\rT
\ar[r]^-{\rF \Phi} & \rF\rU\rV  \ar[r]^-{\epsilon \rV}
& \rV } $$ is an isomorphism.  \end{defn}

The following proposition provides the connection to
Hopf algebra theory:

\begin{prop}\label{wisga}
If $\rF$ is $\rV$-Galois and
$ \theta \colon \rB\rB \rightarrow \rB\rB$ is the lax
morphism arising from the lift $\rV$ of $\rB$, then the
natural transformation
$$
	\xymatrix{ \beta \colon \rB\rB
	\ar[r]^-{\rB \eta \rB} & \rB\rB\rB \ar[r]^-{\theta \rB}
	& \rB\rB\rB \ar[r]^-{\rB \mu} & \rB\rB  }
$$
is an
isomorphism.
\end{prop}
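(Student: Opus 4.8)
The plan is to recognise $\beta$ as the image of the Galois transformation $\Gamma^{\rT,\rV}\colon\rT\to\rV$ under the functor $\rU(-)\rF$, conjugated by the lift isomorphism $\Phi\colon\rB\rU\to\rU\rV$. Precisely, I would prove the formula
\[
	\beta\;=\;\Phi^{-1}\rF\circ\rU\Gamma^{\rT,\rV}\rF\colon\rB\rB\longrightarrow\rB\rB .
\]
Granting this, the proposition is immediate: by hypothesis $\Gamma^{\rT,\rV}$ is an isomorphism, hence so is its whiskering $\rU\Gamma^{\rT,\rV}\rF$ (functors preserve isomorphisms), while $\Phi^{-1}\rF$ is invertible because $\Phi$ is; thus $\beta$ is a composite of isomorphisms. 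This is the exact analogue of the classical statement that the canonical (Galois) map of a bialgebra is bijective precisely when an antipode exists, which is the advertised connection to Hopf algebra theory.

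To establish the formula, I would first expand, using only $\rU\rF=\rB$,
\[
	\rU\Gamma^{\rT,\rV}\rF\;=\;\rU\epsilon\rV\rF\circ\rB\Phi\rF\circ\rB\eta\rB ,
\]
where $\rB\eta\rB$ denotes the middle insertion $\rB\star\eta\star\rB$. Since $\beta=\rB\mu\circ\theta\rB\circ\rB\eta\rB$ is likewise obtained by precomposing with this very same $\rB\eta\rB$, it suffices to prove the identity of natural transformations $\rB\rB\rB\to\rB\rB$
\begin{equation*}
	\rB\mu\circ\theta\rB\;=\;\Phi^{-1}\rF\circ\rU\epsilon\rV\rF\circ\rB\Phi\rF . \tag{$\star$}
\end{equation*}
For this I would substitute the explicit form of $\theta$ from Theorem~\ref{arise}(1), namely $\theta=\Phi^{-1}\rF\circ\rU\adj$ with $\adj=\epsilon\rV\rF\circ\rF\Phi\rF\circ\rF\rB\eta$ the mate of $\Phi\rF\circ\rB\eta$, together with the standard description $\mu=\rU\epsilon\rF$ of the monad multiplication.

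The heart of the argument, and the step I expect to be the main obstacle, is the ensuing diagram chase. After whiskering $\theta$ by $\rB$ and composing with $\rB\mu$, the left-hand side of $(\star)$ becomes a long string carrying one unit $\eta$ (inherited from the mate $\adj$) and two counits $\epsilon$ (one from $\mu$, one from $\theta$), whereas the right-hand side carries a single counit and no unit. I would first use naturality of $\epsilon$ to commute $\rB\mu$ past the block on which $\Phi^{-1}\rF\rB$ acts, and then exploit naturality of $\Phi$ together with a triangle identity of the adjunction $\rF\dashv\rU$ to annihilate the inserted $\eta$ against one of the counits; the net effect is that the copy of $\Phi$ which in $\theta$ is applied to the $\eta$-created occurrence of $\rB\rU$ is transported onto the genuine $\rB\rU$ appearing in $\rB\Phi\rF$, so the string collapses exactly onto the right-hand side of $(\star)$. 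As a consistency check, in the trivial Example~\ref{banal} (where $\rV=\rT$ and $\Phi=\idty$) one has $\theta=\rB\eta\circ\mu$ and $\Gamma^{\rT,\rV}=\idty_\rT$, and both sides of $(\star)$ reduce through the unit axioms to $\idty_{\rB\rB}$, giving $\beta=\idty_{\rB\rB}$ in agreement with the claimed formula. The careful bookkeeping of the several whiskerings is the only genuinely delicate point; no ingredient beyond naturality and the triangle identities is required.
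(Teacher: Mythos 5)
Your proposal is correct, and its backbone is identical to the paper's proof: both hinge on recognising $\beta=\Phi^{-1}\rF\circ\rU\Gamma^{\rT,\rV}\rF$, after which invertibility is immediate from the $\rV$-Galois hypothesis. Where you differ is in how this formula is verified. The paper routes the computation through the companion comonad distributive law $\chi\colon\rT\rV\to\rV\rT$ of Theorem~\ref{arise}, inserting its counit law $\epsilon\rV=\rV\epsilon\circ\chi$ and the relation $\rU\chi\circ\rU\rF\Phi=\Phi\rF\rU\circ\theta\rU$ (both sides equal $\rU\adj\rU$), then finishing with naturality of $\Phi$; you bypass $\chi$ entirely by substituting the mate formula $\adj=\epsilon\rV\rF\circ\rF\Phi\rF\circ\rF\rB\eta$ into $\theta$ and chasing directly. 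I checked that your chase closes exactly as described: commuting $\rB\mu$ past $\Phi^{-1}\rF\rB$ is a single naturality square (of $\Phi^{-1}$ at a component of $\epsilon$ --- calling it ``naturality of $\epsilon$'' is a harmless mislabel), after which interchanging the two counits, sliding $\Phi$ along $\epsilon\rF$ by naturality, and the unit law $\mu\circ\eta\rB=\idty$ collapse the string onto the right-hand side of $(\star)$; in fact $(\star)$ holds as an identity on all of $\rB\rB\rB$, slightly more than is needed after precomposition with $\rB\eta\rB$, and the paper's identities yield the same strengthening. Your route is marginally more self-contained, while the paper's reuses machinery already established in Theorem~\ref{arise}. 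One peripheral slip: in your consistency check for Example~\ref{banal}, both sides of $(\star)$ reduce to $\mu\rB$, not to $\idty_{\rB\rB}$ (they are transformations $\rB\rB\rB\to\rB\rB$); only after precomposing with $\rB\eta\rB$ does one recover $\beta=\idty_{\rB\rB}$.
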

\begin{proof}
If $\rF$ is
$\rV$-Galois, then $ \rU\Gamma^{\rT,\rV}\rF$ is an
isomorphism
$$
	\xymatrix{ \rU\rT\rF=\rU\rF\rU\rF
\ar[rr]^-{\rU\rF \eta \rU\rF} & & \rU\rF\rU\rF\rU\rF =
\rU\rF\rU\rT \rF \ar[r]^-{\rU\rF \Phi \rF} &
\rU\rF\rU\rV \rF \ar[r]^-{\rU\epsilon \rV\rF} &
\rU\rV\rF. }
$$
Let now $ \chi \colon \rT\rV \rightarrow
\rV\rT$ be the lax morphism corresponding to $ \theta $
as in Theorem~\ref{arise}.  Inserting $\varepsilon
\rV=(\rV \varepsilon) \circ \chi$ and $\rU \chi \circ
\rU\rF \Phi=\Phi \rF\rU \circ \theta \rU$ and
$\rB=\rU\rF$, the isomorphism becomes $$ \xymatrix{
\rU\rT\rF=\rB\rB \ar[r]^-{\rB \eta \rB} & \rB\rB\rB
\ar[r]^-{\theta \rB} & \rB\rB\rB=\rB\rU\rF\rU\rF
\ar[r]^-{\Phi \rF\rU\rF} & \rU\rV\rF\rU\rF
\ar[r]^-{\rU\rV\epsilon \rF} & \rU\rV\rF } $$ Finally,
we have by construction $\rU \varepsilon \rF=\mu $, and
using the naturality of $\Phi$ this gives
$\rU\rV\varepsilon   \rF \circ \Phi \rF\rU\rF= \Phi \rF
\circ \rB\rU\varepsilon \rF$.  Hence composing the
above isomorphism with $\Phi^{-1} \rF$ gives $ \beta $.
\end{proof}

It is this associated map $ \beta $ that is used to
distinguish Hopf algebras amongst bialgebras, see
Section~\ref{wisbimonad} below.

\section{Coefficients}\label{coeffsec}
\subsection{Coalgebras over distributive laws}\label{chicoalgs} Let
$\bT = \left(\rT, \Delta^\rT, \epsilon^\rT\right)$
and $\bS = \left(\rS, \Delta^\rS,
\epsilon^\rS\right)$ be comonads on a category $\cB$, and let
$\chi \colon \bT \bS\rightarrow \bS \bT$
be a distributive law.  We now discuss $ \chi
$-coalgebras, which serve as
coefficients in the homological constructions in the
next section.

\begin{defn}\label{coalgdef}
A \emph{right
$\chi$-coalgebra} is a triple $(\rM,\cX, \rho)$,
where
%$\cX$ is a category,
$\rM \colon \cX
\rightarrow \cB$ is a functor and $\rho \colon \rT\rM \rightarrow \rS\rM$
is a natural transformation such that the diagrams
$$
\xymatrix{ \rT\rM \ar[r]^-{\Delta^\rT\rM} \ar[d]_-{\rho} &
\rT\rT\rM \ar[r]^{\rT\rho} &\rT\rS\rM \ar[d]^-{\chi \rM} \\
\rS\rM \ar[r]_-{\Delta^\rS \rM} & 	\rS\rS\rM & \rS\rT\rM
\ar[l]^-{\rS\rho} } \quad\quad\quad
\xymatrix{ & \rT\rM
\ar[dl]_-{\epsilon^\rT\rM} \ar[d]^-{\rho}\\ \rM & \rS\rM
\ar[l]^-{\epsilon^\rS \rM} } $$ commute.  Dually, we
define \emph{left $\chi$-coalgebras} $(\rN, \cY,
\lambda)$.
\end{defn}

The following characterises right $ \chi $-coalgebras
in the setting of Theorem~\ref{arise}.

\begin{prop}\label{chicoalgprop}
In the situation of
Theorem~\ref{arise}, let $\rM \colon \cX \rightarrow \cB$
be a functor.
\begin{enumerate}
\item Right
$\chi$-coalgebra structures $\rho$ on $\rM$ correspond to
$\bC$-coalgebra structures $\nabla$ on the
functor $\rU\rM \colon \cX \rightarrow \cA$.
\item Let
$\rS$ and $\rV$ be two lifts of the functor $\rC$ through the
adjunction, and let $\chi^\rS$ and $\chi^\rV$ denote the
comonad distributive laws determined by the lifts
$\rS$ and $\rV$ respectively. Then the Galois map
$\Gamma^{\rS,\rV}$ maps right $\chi^\rS$-coalgebra
structures $\rho^\rS$ on $\rM$ bijectively to right
$\chi^\rV$-coalgebra structures $\rho^\rV$ on $\rM$.
\end{enumerate}
\end{prop}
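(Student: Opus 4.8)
The plan is to realise the correspondence in part~(1) as the composite of two bijections that are available for formal reasons, and then to check that under this composite the two right $\chi$-coalgebra axioms go over into the two $\bC$-coalgebra axioms. To a right $\chi$-coalgebra structure $\rho\colon\rT\rM=\rF\rU\rM\to\rS\rM$ I associate its adjunct $\bar\rho=\rU\rho\circ\eta\rU\rM\colon\rU\rM\to\rU\rS\rM$ under $\rF\dashv\rU$ and set $\nabla:=\Omega^{-1}\rM\circ\bar\rho\colon\rU\rM\to\rC\rU\rM$; the inverse assignment sends $\nabla$ to $\rho=\varepsilon\rS\rM\circ\rF(\Omega\rM\circ\nabla)$. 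Since the adjunction transpose and the isomorphism $\Omega$ are both bijective, $\rho\mapsto\nabla$ is a bijection between natural transformations $\rT\rM\to\rS\rM$ and natural transformations $\rU\rM\to\rC\rU\rM$, and the whole content of part~(1) is that it carries coalgebra structures to coalgebra structures in both directions.

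The counit axiom is the easy half. Transposing the identity $\epsilon^\rS\rM\circ\rho=\epsilon^\rT\rM$ across the adjunction (apply $\rU$ and precompose with $\eta\rU\rM$), the right-hand side becomes $\rU\varepsilon\rM\circ\eta\rU\rM=\idty$ by a triangle identity, since $\epsilon^\rT=\varepsilon$ is the counit of the induced comonad $\bT$; the left-hand side becomes $\rU\epsilon^\rS\rM\circ\Omega\rM\circ\nabla$. As $\Omega$ is a lax morphism of comonads, its counit compatibility reads $\rU\epsilon^\rS\circ\Omega=\epsilon^\rC\rU$, so this equals $\epsilon^\rC\rU\rM\circ\nabla$. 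Hence the counit axiom for $\rho$ is equivalent to $\epsilon^\rC\rU\rM\circ\nabla=\idty$, the counit axiom for $\nabla$.

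The coassociativity axiom $\rS\rho\circ\chi\rM\circ\rT\rho\circ\Delta^\rT\rM=\Delta^\rS\rM\circ\rho$ is the real work and the main obstacle, because the distributive law $\chi$ must be unfolded in terms of the data $\Omega,\eta$ from which it is built. I would transpose this equation across the adjunction and then post-compose with the isomorphism $\rC\Omega^{-1}\rM\circ\Omega^{-1}\rS\rM\colon\rU\rS\rS\rM\to\rC\rC\rU\rM$, so as to compare it with the $\bC$-coassociativity $\rC\nabla\circ\nabla=\Delta^\rC\rU\rM\circ\nabla$. Three ingredients convert one equation into the other: the defining property of $\chi$ from Theorem~\ref{arise}(4), namely $\rU\chi\circ\eta\rU\rS=\Omega\rF\rU\circ\rC\eta\rU\circ\Omega^{-1}$, which removes $\chi$ in favour of $\Omega$ and $\eta$; the comultiplication compatibility $\rU\Delta^\rS\circ\Omega=\Omega\rS\circ\rC\Omega\circ\Delta^\rC\rU$ of the lax morphism $\Omega$, which produces the $\Delta^\rC$ on the right-hand side; and repeated use of the naturality of $\eta$ and $\Omega$ together with the triangle identities to absorb the units coming from $\Delta^\rT=\rF\eta\rU$ and from the transposition. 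The delicate point is to manoeuvre a copy of $\eta\rU\rS\rM$ into position immediately before $\rU\chi\rM$ (using naturality of $\eta$ and the coherence $\rU\rF\eta\rU\rM\circ\eta\rU\rM=\eta\rU\rF\rU\rM\circ\eta\rU\rM$) so that Theorem~\ref{arise}(4) becomes applicable; once the two sides are brought to the forms $\Omega\rS\rM\circ\rC\Omega\rM\circ\rC\nabla\circ\nabla$ and $\Omega\rS\rM\circ\rC\Omega\rM\circ\Delta^\rC\rU\rM\circ\nabla$, cancelling the prefix $\rC\Omega^{-1}\rM\circ\Omega^{-1}\rS\rM$ yields the claim.

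For part~(2) I would reduce everything to part~(1). Let $\rho^\rS$ be the right $\chi^\rS$-coalgebra structure on $\rM$ corresponding via the lift $\rS$ and $\Omega$ to a $\bC$-coalgebra structure $\nabla$ on $\rU\rM$, and let $\rho^\rV$ be the one corresponding to the same $\nabla$ via the lift $\rV$ and $\Phi$. By part~(1), $\rho^\rV=\varepsilon\rV\rM\circ\rF(\Phi\rM\circ\nabla)$ with $\nabla=\Omega^{-1}\rM\circ\rU\rho^\rS\circ\eta\rU\rM$, which expands to $\varepsilon\rV\rM\circ\rF(\Phi\rM\circ\Omega^{-1}\rM)\circ\rF\rU\rho^\rS\circ\rF\eta\rU\rM$; reading off the formula of Proposition~\ref{sunshines}(2) at $X=\rU\rM$, $Y=\rM$ and $f=\rho^\rS$ (componentwise over $\cX$) shows that this is precisely $\Gamma^{\rS,\rV}(\rho^\rS)$, naturality of the Galois map guaranteeing that the result is again a natural transformation. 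Thus $\Gamma^{\rS,\rV}$ carries the right $\chi^\rS$-coalgebra structure $\rho^\rS$ to the right $\chi^\rV$-coalgebra structure $\rho^\rV$. Since part~(1) exhibits both assignments $\rho^\rS\mapsto\nabla$ and $\rho^\rV\mapsto\nabla$ as bijections onto the $\bC$-coalgebra structures on $\rU\rM$, and $\Gamma^{\rS,\rV}$ realises the composite of one with the inverse of the other, it restricts to a bijection from right $\chi^\rS$-coalgebra structures to right $\chi^\rV$-coalgebra structures; that its inverse is induced by $\Gamma^{\rV,\rS}$ is Proposition~\ref{sunshines}(1).
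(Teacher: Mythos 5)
Your proposal is correct and follows essentially the same route as the paper: the paper's proof simply notes that right $\chi$-coalgebra structures $\rho\colon\rF\rU\rM\to\rS\rM$ transpose under the adjunction to $\nabla\colon\rU\rM\to\rU\rS\rM\cong\rC\rU\rM$, and that part~(2) is immediate because $\Gamma^{\rS,\rV}$ is by definition the composite of the adjunction bijections with $\Phi\circ\Omega^{-1}$. You merely execute in detail the axiom-by-axiom verification (counit via the lax counit compatibility of $\Omega$, coassociativity via Theorem~\ref{arise}(4) and the lax comultiplication compatibility) that the paper leaves implicit, and your sketch of that computation is accurate.
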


\begin{proof} For part (1),
right $\chi$-coalgebra structures $\rho \colon \rF\rU\rM
\rightarrow \rS\rM$ are mapped under the adjunction to $\nabla
\colon \rU\rM \rightarrow \rU\rS\rM \cong\rC\rU\rM$. Part (2)
follows immediately since the Galois map is
the composition of the adjunction isomorphisms and
$\Phi \circ \Omega^{-1}$.  \end{proof}

\subsection{{Entwined} $ \chi $-coalgebras}
In the remainder of this section, we discuss a class of
coefficients that
%arise in a trivial way and will
lead
to contractible simplicial objects, see
Proposition~\ref{trivcontract} below.  In the Hopf
algebroid setting, these are the Hopf (or entwined)
modules as studied in \cite{MR3020336,MR1604340}.
First, we recall:

\begin{defn}
A \emph{$\bT$-coalgebra} is a triple
$(\rM, \cX, \nabla)$, where $\rM \colon \cX
\rightarrow \cB$ is a functor and $\nabla \colon \rM \rightarrow \rT\rM$ is
a natural transformation such that the diagrams
$$
	\xymatrix{\rM \ar[r]^-{\nabla}
	\ar[d]_-\nabla & \rT\rM
	\ar[d]^-{\Delta^\rT\rM} \\
	\rT\rM \ar[r]_-{\rT \nabla} &\rT\rT\rM
}
\quad\quad\quad
\xymatrix{ \rM \ar@{=}[dr]
\ar[r]^-\nabla & \rT\rM \ar[d]^-{\epsilon^\rT\rM} \\ &
\rM }
$$
commute.
\end{defn}

Dually, one defines $\bT$-opcoalgebras
$(\rN, \cY, \nabla)$ where $\nabla \colon \rN
\rightarrow \rN\rT$, as well as
algebras and opalgebras involving monads.  Note that
$\bT$-coalgebras can be equivalently viewed as
$1$-cells from respectively to the trivial distributive
law:

\begin{prop}\label{triv1cell}
Given an $\bS$-coalgebra
$(\rM, \cX, \nabla^\rS)$ and a
$\bT$-opcoalgebra $(\rN, \cY, \nabla^\rT)$,
there is a pair of $1$-cells
$$
	\xymatrix@C=1.5em{ &
	\cX \ar@{}[r]^{{\idty}}="a"
	\ar@{}[l]_{{\idty}}="b"
	\ar[d]^-{\left(\rM,~\epsilon^\rT\rM,~\nabla^\rS\right)}&
\\ & \cB \ar@{}[r]_{\bT}="c" \ar@{}[l]^{\bS}
="d" & \ar @{.}@/_/ "a";"b" |{{\idty}} \ar
@{.}@/^/ "c";"d" |{\chi} } \quad\quad\quad
\xymatrix@C=1.5em{ & \cB \ar@{}[r]^{\bT}="a"
\ar@{}[l]_{\bS}="b"
\ar[d]^-{\left(\rN,~\nabla^\rT,~\rN \epsilon^\rS\right)}&
\\ & \cY \ar@{}[r]_{{\idty}}="c"
\ar@{}[l]^{{\idty}}="d" & \ar @{.}@/_/ "a";"b"
|{\chi} \ar @{.}@/^/ "c";"d" |{{\idty}} } $$ and
all $1$-cells ${\idty} \rightarrow \chi $ respectively $\chi
\rightarrow {\idty}$ are of this form.  \end{prop}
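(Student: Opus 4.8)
The plan is to unwind the definition of a $1$-cell in $\cDist$ in each of the two cases and to check the three defining conditions (lax morphism of comonads, colax morphism of comonads, and the Yang--Baxter hexagon), observing that each collapses to either a comonad axiom, a coalgebra axiom, or one of the two counit axioms of the distributive law $\chi$. I will treat the first $1$-cell $(\rM,\epsilon^\rT\rM,\nabla^\rS)\colon (\cX,\idty,\idty,\idty)\to(\cB,\chi,\bT,\bS)$ in detail, since the second is formally dual. Here the lax comonad morphism is $\sigma=\epsilon^\rT\rM\colon\rT\rM\to\rM$, relating the comonad $\bT$ to the identity comonad: its counit condition is vacuous, and its comultiplication condition reads $\epsilon^\rT\rM=\epsilon^\rT\rM\circ\rT\epsilon^\rT\rM\circ\Delta^\rT\rM$, which reduces at once to the counit law $\rT\epsilon^\rT\circ\Delta^\rT=\idty$ of $\bT$. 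The colax comonad morphism is $\gamma=\nabla^\rS\colon\rM\to\rS\rM$; because the source comonad is the identity, its two defining conditions are exactly coassociativity $\Delta^\rS\rM\circ\nabla^\rS=\rS\nabla^\rS\circ\nabla^\rS$ and counitality $\epsilon^\rS\rM\circ\nabla^\rS=\idty$, i.e.\ precisely the statement that $(\rM,\cX,\nabla^\rS)$ is an $\bS$-coalgebra.

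The only genuine computation is the Yang--Baxter hexagon, which with the trivial data on $\cX$ degenerates to the identity $\nabla^\rS\circ\epsilon^\rT\rM=\rS\epsilon^\rT\rM\circ\chi\rM\circ\rT\nabla^\rS$ of natural transformations $\rT\rM\to\rS\rM$. To verify it, I would first apply the counit axiom $\rS\epsilon^\rT\circ\chi=\epsilon^\rT\rS$ of the comonad distributive law (whiskered by $\rM$) to rewrite the right-hand side as $\epsilon^\rT\rS\rM\circ\rT\nabla^\rS$, and then invoke naturality of $\epsilon^\rT\colon\rT\to\idty$ with respect to $\nabla^\rS$ to turn this into $\nabla^\rS\circ\epsilon^\rT\rM$, which is the left-hand side. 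The second $1$-cell $(\rN,\nabla^\rT,\rN\epsilon^\rS)\colon(\cB,\chi,\bT,\bS)\to(\cY,\idty,\idty,\idty)$ is handled identically after dualizing: the lax condition on $\sigma=\nabla^\rT$ becomes the $\bT$-opcoalgebra axioms, the colax condition forces $\gamma=\rN\epsilon^\rS$, and the Yang--Baxter hexagon collapses to $\rN\epsilon^\rS\rT\circ\rN\chi\circ\nabla^\rT\rS=\nabla^\rT\circ\rN\epsilon^\rS$, which follows from the other counit axiom $\epsilon^\rS\rT\circ\chi=\rT\epsilon^\rS$ together with naturality of $\nabla^\rT$.

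For the classification statements I would read the same computations backwards. Given an arbitrary $1$-cell $(\Sigma,\sigma,\gamma)\colon\idty\to\chi$, the counit condition of the lax morphism $\sigma\colon\rT\Sigma\to\Sigma$ forces $\sigma=\epsilon^\rT\Sigma$, while the colax conditions on $\gamma\colon\Sigma\to\rS\Sigma$ are nothing but the $\bS$-coalgebra axioms; hence the $1$-cell equals $(\Sigma,\epsilon^\rT\Sigma,\gamma)$ with $(\Sigma,\cX,\gamma)$ an $\bS$-coalgebra, which is the first form. Dually, every $1$-cell $\chi\to\idty$ has $\gamma$ forced to be $\Sigma\epsilon^\rS$ by the counit condition of the colax morphism, and $\sigma$ an arbitrary $\bT$-opcoalgebra structure, giving the second form. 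Since the comultiplication condition on $\sigma$ and the Yang--Baxter hexagon were shown above to hold automatically once $\sigma=\epsilon^\rT\Sigma$ and $\gamma$ is a coalgebra structure, these data impose no further constraint, so the two assignments are genuine bijections between $1$-cells and (op)coalgebras.

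I do not expect a hard analytic obstacle: the proof is a bookkeeping exercise. The main care required is keeping straight which comonad plays the role of source and which of target inside each lax/colax morphism, and tracking on which side the various natural transformations are whiskered, so that the Yang--Baxter hexagon degenerates in the intended direction. Once the two counit axioms of $\chi$ are identified as the operative ingredient, together with the comonad counit laws and naturality of the counits, no further computation is needed.
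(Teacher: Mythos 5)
Your proof is correct and is exactly the routine verification that the paper leaves implicit (it states this proposition without proof): you reduce the lax and colax conditions to the comonad counit law and the $\bS$-coalgebra/$\bT$-opcoalgebra axioms, the degenerate Yang--Baxter hexagon to the two counit axioms of $\chi$ combined with naturality of $\epsilon^\rT$ resp.\ $\nabla^\rT$, and you correctly observe that the counit conditions are what force $\sigma = \epsilon^\rT\Sigma$ resp.\ $\gamma = \Sigma\epsilon^\rS$ in the classification. One minor wording point: in the existence step the counit condition on $\sigma$ is not \emph{vacuous} but tautologically satisfied by the choice $\sigma = \epsilon^\rT\rM$ --- which is consistent with, and indeed the same fact as, its r\^ole in forcing uniqueness.
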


Furthermore, these $1$-cells can also be viewed as $ \chi
$-coalgebras:

\begin{prop}\label{triv}
Let $\chi \colon \bT\bS \rightarrow \bS\bT$ be a comonad
distributive law. Then:
\begin{enumerate}
\item Any $\bS$-coalgebra $(\rM,
\cX, \nabla^\rS)$ defines a right
$\chi$-coalgebra $(\rM, \cX, \epsilon^\rT
\nabla^\rS)$.  \item Any $\bT$-opcoalgebra $(\rN,
\cY, \nabla^\rT)$ defines a left
$\chi$-coalgebra $(\rN, \cY, \nabla^\rT
\epsilon^\rS)$.
\end{enumerate}
\end{prop}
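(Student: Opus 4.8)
The plan is to verify directly that the proposed natural transformations satisfy the two defining diagrams of a right (respectively left) $\chi$-coalgebra from Definition~\ref{coalgdef}. For part~(1), I would set $\rho := \epsilon^\rT\rS \circ \rT\nabla^\rS$ — that is, the composite $\rT\rM \xrightarrow{\rT\nabla^\rS} \rT\rS\rM \xrightarrow{\epsilon^\rT\rS\rM} \rS\rM$ — which is the natural reading of ``$\epsilon^\rT\nabla^\rS$'' in this setting. The two conditions to check are the counit condition (the triangle with $\epsilon^\rT\rM$ and $\epsilon^\rS\rM$) and the main coassociativity-type hexagon involving $\chi$.

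First I would dispatch the counit condition, which should be almost immediate: composing $\rho$ with $\epsilon^\rS\rM$ and using the counit axiom for $\bS$ applied to $\nabla^\rS$ recovers $\epsilon^\rT\rM$ by naturality of $\epsilon^\rT$. Concretely, $\epsilon^\rS\rM \circ \epsilon^\rT\rS\rM \circ \rT\nabla^\rS = \epsilon^\rT\rM \circ \rT\epsilon^\rS\rM \circ \rT\nabla^\rS = \epsilon^\rT\rM$, where the first equality is naturality of $\epsilon^\rT$ and the second is the $\bS$-coalgebra counit axiom. Then I would attack the main pentagon/hexagon: here the work is to rewrite the composite $\chi\rM \circ \rT\rho \circ \Delta^\rT\rM$ and the composite $\rS\rho \circ \Delta^\rS\rM$ and show they agree after landing in $\rS\rS\rM$. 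The key inputs are the comultiplication compatibility of $\Delta^\rT$ with $\epsilon^\rT$, the coassociativity of $\nabla^\rS$ as an $\bS$-coalgebra, and — crucially — the counit axiom relating $\chi$ to $\epsilon^\rT$, namely that $\chi$ is a morphism of comonads so that $\epsilon^\rT\rS = \rS\epsilon^\rT \circ \chi$ (the counit leg of the distributive-law conditions). This last identity is what lets the $\epsilon^\rT$ appearing in $\rho$ slide across $\chi$ so that the $\chi$ on the $\bT$-side cancels against the trivial $\bT$-structure coming from $\epsilon^\rT\rM$.

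Part~(2) is formally dual, using instead the $\bT$-opcoalgebra structure $\nabla^\rT \colon \rN \rightarrow \rN\rT$ and the composite $\lambda := \rN\epsilon^\rS \circ \nabla^\rT\rS$ as the candidate left $\chi$-coalgebra structure; I would simply invoke the dual of the argument for~(1), exchanging the roles of $\bT$ and $\bS$ and of lax/colax, rather than repeat the diagram chase.

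\emph{The hard part will be} the main hexagon in part~(1): keeping track of which comonad's comultiplication and counit act on which tensor factor, and correctly applying the counit condition for the distributive law $\chi$ (rather than its comultiplication condition) to eliminate $\chi$. I expect that once the identity $\epsilon^\rT\rS = \rS\epsilon^\rT \circ \chi$ is brought in at the right point, the diagram collapses by naturality and the two $\bS$-coalgebra axioms, but sequencing these rewrites so that the $\chi$ genuinely cancels — rather than producing a stray $\chi$ or $\epsilon^\rT$ in the wrong slot — is the one step that demands care.
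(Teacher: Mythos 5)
The paper states Proposition~\ref{triv} without any proof, treating it as a routine check, and your verification is correct and is exactly that intended argument: reading $\epsilon^\rT\nabla^\rS$ as the horizontal composite $\rho=\epsilon^\rT\rS\rM\circ\rT\nabla^\rS=\nabla^\rS\circ\epsilon^\rT\rM$, the counit triangle follows as you compute, and the pentagon does collapse precisely from the ingredients you name — the comonad counit axiom $\rT\epsilon^\rT\circ\Delta^\rT=\idty$ (which reduces $\rT\rho\circ\Delta^\rT\rM$ to $\rT\nabla^\rS$), the counit leg $\rS\epsilon^\rT\circ\chi=\epsilon^\rT\rS$ of the distributive law (which absorbs $\chi$ into the $\epsilon^\rT$ coming from $\rS\rho$), naturality of $\epsilon^\rT$, and coassociativity of $\nabla^\rS$. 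One cosmetic correction to part (2): the structure map should be $\lambda=\rN\rT\epsilon^\rS\circ\nabla^\rT\rS$ (equivalently $\nabla^\rT\circ\rN\epsilon^\rS$), since your expression $\rN\epsilon^\rS\circ\nabla^\rT\rS$ does not typecheck — $\nabla^\rT\rS$ lands in $\rN\rT\rS$, not in $\rN\rS$.
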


\begin{defn}
If a $\chi$-coalgebra arises from an
(op)coalgebra as in Proposition~\ref{triv}, then we
call the $\chi$-coalgebra \emph{{entwined}}.
\end{defn}

Note, however, that there is no obvious way to associate
a $1$-cell in $\cDist$ to an arbitrary right or left $\chi$-coalgebra.

\subsection{Entwined algebras}
Finally, we describe how
entwined $ \chi $-coalgebras are in some sense lifts of
entwined algebras; throughout,
$\theta \colon \bB\bC \rightarrow \bC\bB$ is a mixed
distributive law between a monad $\bB$ and a
comonad $\bC$ on a category $\cA$.

\begin{defn}
Let $\rM \colon
\cX \rightarrow \cA$ be a functor which has a
$\bB$-algebra structure $\beta \colon \rB \rM \rightarrow \rM$ and
a $\bC$-coalgebra structure $\nabla \colon \rM \rightarrow
\rC\rM$. We say that the quadruple $(\rM, \cX, \beta,
\nabla)$ is an \emph{entwined algebra with respect to
$\theta$} if the diagram
\begin{equation}\label{entwinedcondition}		
\begin{array}{c}
\xymatrix{ \rB \rM
\ar[d]_-{\rB \nabla} \ar[r]^-{\beta} & \rM
\ar[r]^-{\nabla} & \rC\rM\\ \rB\rC \rM \ar[rr]_{\theta
\rM} & &
\rC\rB \rM \ar[u]_-{\rC \beta} }
\end{array}
\end{equation}
commutes.
\end{defn}

Dually we define an entwined opalgebra structure
%$\omega \colon \rN \rB\rightarrow \rN$ 
on a
functor $\rN \colon \cA \rightarrow \cY$
for a distributive law $\mathbb{CB} \rightarrow \mathbb{BC}$.

The following proposition explains the relation
between entwined algebras and
entwined right $ \chi $-coalgebras for distributive
laws $ \chi $ arising from an adjunction:

\begin{prop}
In the situation of
Theorem~\ref{arise}, let $\rM \colon \cX \rightarrow \cB$
be a functor and let $\nabla \colon \rM \rightarrow
\rS\rM$ be a natural transformation.
\begin{enumerate}
\item If $\nabla$ is an
$\bS$-coalgebra structure,
then the structure morphisms
$$
\xymatrix{ \rB\rU\rM = \rU\rF\rU\rM
\ar[r]^-{\rU \epsilon \rM} & \rU\rM}, \qquad \xymatrix{
\rU\rM \ar[r]^-{\rU \nabla} & \rU\rS\rM
\ar[r]^-{\Omega^{-1}} &\rC\rU\rM}
$$
turn $\rU\rM$ into an entwined
algebra with respect to $\theta$.
\item If $\cB =
\cA^\bB$, then the converse of (1) holds.
\end{enumerate} \end{prop}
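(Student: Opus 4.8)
The plan is to translate the two defining diagrams—the entwined-algebra condition \eqref{entwinedcondition} for the pair $(\rU\rM, \text{action}, \nabla')$ on $\cA$, where $\nabla' := \Omega^{-1} \circ \rU\nabla$, and the right $\chi$-coalgebra condition for $(\rM, \nabla^\chi)$ on $\cB$—into one another by repeatedly invoking the adjunction $\rF \dashv \rU$ together with the uniqueness characterisation of $\theta$ and $\chi$ from Theorem~\ref{arise}, parts (3) and (4). The key observation is that $\nabla' = \Omega^{-1} \circ \rU\nabla$ is exactly the $\bC$-coalgebra structure on $\rU\rM$ that Proposition~\ref{chicoalgprop}(1) associates to the right $\chi$-coalgebra structure $\rho := \chi\rM \circ \rT\nabla \colon \rT\rM \to \rS\rM$ built from the $\bS$-coalgebra $\nabla$. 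So much of the bookkeeping is already packaged in that earlier result.

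For part (1), I would first verify that the proposed action $\rU\epsilon\rM \colon \rB\rU\rM \to \rU\rM$ is a genuine $\bB$-algebra structure: this is immediate from the triangle identities for the adjunction, since $\rB = \rU\rF$ and $\mu = \rU\epsilon\rF$, so associativity and unitality reduce to the counit-comultiplication compatibilities of the comonad $\bT$. Next I would check that $\nabla' = \Omega^{-1} \circ \rU\nabla$ is a $\bC$-coalgebra structure; since $\nabla$ is an $\bS$-coalgebra structure and $\Omega$ is a lax morphism of comonads, this follows by pasting the two coassociativity squares with the naturality square for $\Omega$ and the compatibility of $\Omega$ with the comultiplications $\Delta^\rS, \Delta^\rC$. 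The substantive step is then the entwining condition \eqref{entwinedcondition}: I would expand both the monad action and $\nabla'$ in terms of $\epsilon$, $\Omega^{-1}$, and $\nabla$, and rewrite $\theta\rU\rM$ using the defining commutative square of $\theta$ from Theorem~\ref{arise}(3), namely $\Omega^{-1} \circ \rU\epsilon\rS \circ \rU\rF\Omega = \rC\rU\epsilon \circ \theta\rU$. The entwining square should then collapse to the naturality of $\epsilon$ against the morphism $\nabla$, using that $\nabla$ intertwines the $\bS$-coalgebra structure as required.

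For part (2), assuming $\cB = \cA^\bB$, the converse amounts to reconstructing the $\bS$-coalgebra structure $\nabla$ on $\rM$ from an entwined-algebra structure $(\beta, \nabla')$ on $\rU\rM$ and checking that it is forced. Here I would use the explicit description of the lift from Section~\ref{exceptional}: an object of $\cA^\bB$ is a pair $(X, \alpha)$ and $\rS(X,\alpha) = (\rC X, \rC\alpha \circ \theta X)$, so a natural transformation $\nabla \colon \rM \to \rS\rM$ in $\cA^\bB$ is precisely a morphism $\nabla' \colon \rU\rM \to \rC\rU\rM$ in $\cA$ that is $\bB$-linear, i.e.\ commutes with the actions. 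The entwining condition \eqref{entwinedcondition} is exactly this $\bB$-linearity of $\nabla'$ against the $\bB$-action $\beta$ and the $\bC$-twisted action on $\rC\rU\rM$ coming from $\theta$; hence $\nabla := \Omega^{-1} \circ (\text{adjunct of }\nabla')$ is a well-defined morphism in $\cA^\bB$, and its $\bS$-coalgebra axioms are equivalent under the lift to the $\bC$-coalgebra axioms for $\nabla'$, which hold by hypothesis.

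The main obstacle I anticipate is the entwining square in part (1): one must be careful that the mate $\adj$ and the inverse $\Omega^{-1}$ appear on the correct sides, since $\theta$ is defined with $\Omega^{-1}\rF$ on the right and the lax-morphism diagrams involve $\Omega$ (not its inverse) in the comultiplication compatibility. I would handle this by systematically converting every occurrence back through the defining diagram of $\theta$ rather than manipulating $\adj$ directly, so that the only facts used are naturality of $\epsilon$ and $\Omega$, the triangle identities, and Theorem~\ref{arise}(3); the invertibility of $\Omega$ guarantees that checking the square after post-composing with $\Omega\rF$ (where it becomes a statement about $\adj$) is equivalent to the original, exactly as in the proof of Theorem~\ref{arise}(1).
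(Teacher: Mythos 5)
Your proposal is correct, but at the crucial step of part (1) it takes a genuinely different route from the paper. The paper does not compute the entwining square directly: it notes that the two structure morphisms are the comparison-functor algebra structure and the $\bC$-coalgebra structure of Proposition~\ref{chicoalgprop}, and then obtains (\ref{entwinedcondition}) by applying $\rU$ to the Yang-Baxter condition of the $1$-cell $\left(\rM,\epsilon^\rT\rM,\nabla\right)$ of Proposition~\ref{triv1cell}, i.e.\ to the identity $\nabla \circ \epsilon^\rT\rM = \rS\epsilon^\rT\rM \circ \chi\rM \circ \rT\nabla$, afterwards rewriting $\rU\chi$ in terms of $\theta$. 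You instead bypass $\chi$ and the $2$-categorical packaging entirely: expanding the right-hand side of (\ref{entwinedcondition}) as $\rC\rU\epsilon\rM \circ \theta\rU\rM \circ \rU\rF\Omega^{-1}\rM \circ \rU\rF\rU\nabla$, substituting the characterising square of Theorem~\ref{arise}(3), cancelling $\rU\rF\Omega\rM \circ \rU\rF\Omega^{-1}\rM$, and using naturality of $\epsilon$ against $\nabla$ does collapse it to $\Omega^{-1}\rM \circ \rU\nabla \circ \rU\epsilon\rM$, exactly as you predict. Your argument is thus more self-contained and elementary, and it makes visible that the compatibility square holds for an arbitrary natural transformation $\nabla \colon \rM \rightarrow \rS\rM$, the coalgebra axioms entering only to make $\Omega^{-1}\circ\rU\nabla$ a $\bC$-coalgebra structure; the paper's argument buys brevity by reusing machinery it has already built. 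Your part (2) coincides with the paper's: the entwining condition is precisely $\bB$-linearity of $\nabla'$, i.e.\ that it is a morphism into $\rS\rM$ in $\cA^\bB$, and the coalgebra axioms transfer along the faithful forgetful functor. Two slips, neither damaging: the entwined right $\chi$-coalgebra structure attached to $\nabla$ is $\nabla \circ \epsilon^\rT\rM$, whereas your $\chi\rM \circ \rT\nabla$ has codomain $\rS\rT\rM$ rather than $\rS\rM$ (one must still post-compose with $\rS\epsilon^\rT\rM$, which yields the same map by the counit axiom for $\chi$); and in part (2) no adjunct or $\Omega^{-1}$ is needed to define the lift, since in the Eilenberg-Moore situation $\Omega = {\idty}$ and $\nabla'$ itself is the underlying map of the lifted $\bS$-coalgebra structure.
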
 \begin{proof} For part (1),
the morphism $\rB\rU\rM\rightarrow \rU\rM$ is the
$\bB$-algebra structure on $\rM$ given by the
comparison functor, and the morphism
$\rU\rM \rightarrow\rC\rU\rM$ is the $\bC$-coalgebra
structure given by Proposition~\ref{chicoalgprop}.
The commutativity of (\ref{entwinedcondition}) follows
by applying the functor
$\rU$ to the Yang-Baxter condition for the $1$-cell
$\left(\rM,~\epsilon^\rT\rM,~\nabla^\rS\right)$ of
Proposition~\ref{triv1cell}. For part (2),
condition (\ref{entwinedcondition}) means exactly that
the $\bC$-coalgebra
structure defines a morphism in $\cA^\bB$, and
hence lifts to an $\bS$-coalgebra structure.
\end{proof}

Dually, entwined opalgebra structures on a $\mathbb B$-opalgebra $(\rN, \mathcal Z, \omega)$ are related to
left $\chi$-coalgebras if
the codomain $\cY$ of $\rN$ is a
category with coequalisers. First, we define a functor
$\rN_\bB \colon \cA^\bB \rightarrow \cY$
that takes a $\bB$-algebra morphism $f
\colon (X, \alpha) \rightarrow (Y, \beta)$ to
$\rN_\bB(f)$ defined using coequalisers:
$$
\xymatrix@C=5em{ \rN\rB X \ar[d]_-{\rN \rB f}
\ar@<+.5ex>[r]^-{\omega_X} \ar@<-.5ex>[r]_-{\rN \alpha} &
\ar[d]_-{\rN f} \rN X \ar@{->>}[r]^-{q_{(X,\alpha)}} &
\rN_\bB (X, \alpha) \ar@{.>}[d]^-{\rN_\bB(f)} \\ \rN\rB
Y  \ar@<+.5ex>[r]^-{\omega_Y}
\ar@<-.5ex>[r]_-{\rN \beta} & \rN Y \ar@{->>}[r]_-{q_{(Y,
\beta)}} & \rN_\bB(Y, \beta) }
$$
Thus $ \rN_\bB$ generalises the functor
$- \otimes_B N$ defined by a left module $N$
over a ring $B$ on the category of right $B$-modules.

Suppose that
$\theta$ is invertible, and that $\rN$ admits the
structure of an entwined $\theta^{-1}$-opalgebra, with
coalgebra structure $\nabla \colon \rN \rightarrow
\rC\rN$. There
are two commutative diagrams:
$$
	\xymatrix{ \rN\rB
	X\ar[d]_-{\nabla_{\rB X}} \ar[rr]^-{\omega_X} &&
	\rN X
	\ar[dd]^-{\nabla_X} \\ \rN\rC\rB X
	\ar[d]_-{\rN \theta^{-1}_X} & & \\ \rN\rB\rC X
	\ar[rr]_-{\omega_{\rC X}} & & \rN\rC X
	}\quad\quad\quad
	\xymatrix{ \rN\rB X
	\ar[d]_-{\nabla_{\rB X}}
	\ar[rr]^-{\rN\alpha} & &
	\rN X \ar[dd]^-{\nabla_X} \\
	\rN\rC\rB X
	\ar[d]_-{\rN \theta^{-1}_X} & & \\
	\rN\rB\rC X
	\ar[r]_-{\rN \theta_X} &
	\rN\rC\rB X
	\ar[r]_-{\rN\rC \alpha} &
	\rN\rC X }
$$
Hence, using coequalisers,
$\nabla$ extends to a natural transformation $\tilde
\nabla \colon \rN_\bB \rightarrow \rN_\bB \tilde
\rC$,  and in fact it gives $\rN_\bB$ the structure
of a $\tilde{\bC}$-opcoalgebra. Since
$\tilde{\theta}^{-1} \colon \tilde{\bC}
\tilde{\bB} \rightarrow \tilde{\bB}\tilde{\bC}$
is a comonad distributive law on
$\cA^\bB$, Proposition~\ref{triv} gives us the
following:
\begin{prop}
The triple $(\rN_\bB,
\cY, \tilde{\nabla} \epsilon)$ is an {entwined} left
$\tilde\theta^{-1}$-coalgebra.  \end{prop}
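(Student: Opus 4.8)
The plan is to prove the statement in two stages: first to establish that the natural transformation $\tilde\nabla$ constructed in the preceding discussion really is a $\tilde\bC$-opcoalgebra structure on $\rN_\bB$, and then to obtain the left $\tilde\theta^{-1}$-coalgebra from it by a direct appeal to part (2) of Proposition~\ref{triv}. Since an entwined $\chi$-coalgebra is by definition one that arises from an (op)coalgebra as in Proposition~\ref{triv}, the qualifier \emph{entwined} will then come for free, and the only genuine work lies in the first stage, namely the descent of $\nabla$ through the coequalisers defining $\rN_\bB$.

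For the construction of $\tilde\nabla_{(X,\alpha)} \colon \rN_\bB(X,\alpha) \rightarrow \rN_\bB\tilde\rC(X,\alpha)$ I would invoke the universal property of the coequaliser $q_{(X,\alpha)} \colon \rN X \twoheadrightarrow \rN_\bB(X,\alpha)$ of the pair $\omega_X, \rN\alpha \colon \rN\rB X \rightrightarrows \rN X$. Recalling that $\tilde\rC(X,\alpha)=(\rC X, \rC\alpha\circ\theta_X)$, the target coequaliser $q_{\tilde\rC(X,\alpha)}$ is taken of the pair $\omega_{\rC X}, \rN\rC\alpha\circ\rN\theta_X$. The point to check is that $q_{\tilde\rC(X,\alpha)}\circ\nabla_X$ coequalises $\omega_X$ and $\rN\alpha$. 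Precomposing with $\omega_X$ and using the first displayed diagram (the $\theta^{-1}$-entwining condition) rewrites $\nabla_X\circ\omega_X$ as $\omega_{\rC X}\circ\rN\theta^{-1}_X\circ\nabla_{\rB X}$; the defining coequaliser property of $q_{\tilde\rC(X,\alpha)}$ then replaces $\omega_{\rC X}$ by $\rN\rC\alpha\circ\rN\theta_X$, and $\rN\theta_X\circ\rN\theta^{-1}_X=\idty$ collapses the result to $q_{\tilde\rC(X,\alpha)}\circ\rN\rC\alpha\circ\nabla_{\rB X}$. Precomposing instead with $\rN\alpha$ and using the second displayed diagram (which is just naturality of $\nabla$, the round trip $\rN\theta_X\circ\rN\theta^{-1}_X$ again being the identity) yields the very same morphism. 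Hence the two composites agree and $\tilde\nabla_{(X,\alpha)}$ is the unique map with $\tilde\nabla_{(X,\alpha)}\circ q_{(X,\alpha)}=q_{\tilde\rC(X,\alpha)}\circ\nabla_X$. Naturality of $\tilde\nabla$, and the coassociativity and counit axioms making $(\rN_\bB,\cY,\tilde\nabla)$ a $\tilde\bC$-opcoalgebra, then follow formally: each such identity, once precomposed with the epimorphism $q_{(X,\alpha)}$, reduces to the corresponding naturality and $\bC$-opcoalgebra identity for $\nabla$, which hold because $\nabla$ is by hypothesis the opcoalgebra part of the entwined $\theta^{-1}$-opalgebra structure on $\rN$.

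With $(\rN_\bB,\cY,\tilde\nabla)$ recognised as a $\tilde\bC$-opcoalgebra, the proof concludes by applying part (2) of Proposition~\ref{triv} to the comonad distributive law $\tilde\theta^{-1}\colon\tilde\bC\tilde\bB\rightarrow\tilde\bB\tilde\bC$, in which the r\^ole of the comonad called $\bT$ there is played by $\tilde\bC$ and that of $\bS$ by $\tilde\bB$. This produces the left $\tilde\theta^{-1}$-coalgebra structure $\tilde\nabla\circ\rN_\bB\epsilon^{\tilde\bB}=\tilde\nabla\epsilon$ on $\rN_\bB$, and, arising as it does from an opcoalgebra, it is entwined by definition. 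The main obstacle is thus entirely contained in the previous paragraph: verifying that $q_{\tilde\rC(X,\alpha)}\circ\nabla_X$ descends along $q_{(X,\alpha)}$, where the two displayed diagrams and the defining property of the target coequaliser have to be combined exactly as above; everything downstream of that descent is formal bookkeeping.
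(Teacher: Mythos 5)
Your proposal is correct and follows essentially the same route as the paper: the paper's (implicit) proof is exactly the preceding discussion, in which the two displayed diagrams are combined with the defining coequalisers to descend $\nabla$ to a $\tilde{\bC}$-opcoalgebra structure $\tilde\nabla$ on $\rN_\bB$, after which Proposition~\ref{triv}(2) applied to the comonad distributive law $\tilde\theta^{-1}\colon\tilde\bC\tilde\bB\rightarrow\tilde\bB\tilde\bC$ yields the entwined left $\tilde\theta^{-1}$-coalgebra $(\rN_\bB,\cY,\tilde\nabla\epsilon)$. You merely spell out the descent argument (which the paper leaves as ``Hence, using coequalisers...'') in full, and your identification of the r\^oles of $\bT$ and $\bS$ as $\tilde\bC$ and $\tilde\bB$ is exactly right.
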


\section{Duplicial objects}\label{duplobjsec}
\subsection{The bar and opbar resolutions} Let $\bT
= (\rT, \Delta,\epsilon)$ be a comonad on a category
$\cB$, and let $\rM \colon \cX \rightarrow \cB$ be a
functor.
\begin{defn}
The \emph{bar resolution of }$\rM$
%with respect to $\bT$}
is the simplicial functor
$ \rBB (\bT, \rM) \colon \cX \rightarrow
\cB$ defined by
$$
	\rBB (\bT, \rM)_n =
	\rT^{n+1}\rM, \qquad
	d_i =\rT^i \epsilon
	\rT^{n-i} \rM, \qquad
	s_j =\rT^j \Delta\rT^{n-j}
	\rM,
$$
where the face and degeneracy maps
above are given in degree $n$. The \emph{opbar
resolution of }$\rM$,
%with respect to $\bT$},
denoted
$\rBB^\op(\bT, \rM)$, is the simplicial
functor obtained by taking the op\-sim\-pli\-cial
sim\-pli\-cial
functor of $\rBB (\bT, \rM)$. Explicitly:
$$
 	\rBB^\op(\bT, \rM)_n =
\rT^{n+1}\rM,\qquad d_i =\rT^{n-i} \epsilon\rT^{i} \rM,\qquad
	s_j =\rT^{n-j} \Delta\rT^{j} \rM.
$$
\end{defn}
Given any functor $\rN \colon \cB \rightarrow
\cY$, we compose it with the above simplicial
functors to obtain new simplicial functors that we
denote by
$$
	\rCC_\bT(\rN,\rM):=\rN\rBB (\bT,M), \qquad
\rCC^\op_\bT(\rN,\rM):=\rN\rBB ^\op(\bT, \rM).
$$

\subsection{Duplicial objects} Duplicial objects were
defined by Dwyer and Kan \cite{MR826872} as a mild
generalisation of Connes' cyclic objects
\cite{Con:CCEFE}:

\begin{defn}
A \emph{duplicial object} is a simplicial
object $(C,d_i,s_j)$ together with additional morphisms
$t \colon C_n \rightarrow C_n$ satisfying
\[
	d_i t =
	\begin{cases}
	t d_{i-1}, & 1 \leq i \leq n, \\
	d_n,
	&
	i=0, \end{cases} \qquad
	s_{j}t = \begin{cases} t
	s_{j-1}, & 1 \leq j \leq n, \\
	t^2 s_n, & j = 0.
	\end{cases}
\]
A duplicial object is \emph{cyclic} if $
T:=t^{n+1} = {\idty}.  $
\end{defn}

Equivalently, a duplicial object is a simplicial object
which has in each degree an \emph{extra degeneracy}
$ s_{-1} \colon C_n
\rightarrow C_{n+1}$. This corresponds
to $t$ via
$$
	s_{-1} := t s_n,\quad t=d_{n+1} s_{-1}.
$$
This turns a duplicial object also into a
cosimplicial object, and hence a duplicial object $C$
in an additive category carries a boundary
and a coboundary map
$$
	b:=\sum_{i=0}^n
	(-1)^i d_i,\quad
	s:=\sum_{j=-1}^n (-1)^j s_j.
$$
Dwyer
and Kan called such chain and cochain complexes
\emph{duchain complexes} and showed that the normalised
chain complex functor yields an equivalence between
duplicial objects and duchain complexes in an abelian
category, thus extending the classical Dold-Kan
correspondence between simplicial objects and chain
complexes.

If $f_n \in \mathbb{Z} [x]$ is given by $1-x f_n(x) =
(1-x)^{n+1}$ and $B:= s f_n(bs)$, then one has $$
B^2=0,\quad bB+Bb={\idty}-T, $$ and in this way
cyclic objects give rise to mixed complexes $(C,b,B)$
in the sense of \cite{MR883882} that can be used to
define \emph{cyclic homology}.

\subsection{The B\"ohm-\c Stefan construction}
\label{evidenziatore1}
Let
$(\cB, \chi, \bT, \bS)$ be a
$0$-cell in $\cDist$, and let $(\rM, \cX, \rho)$ and $(\rN, \cY, \lambda)$
be right and left $\chi$-coalgebras
respectively.  By abuse of notation, we let $\chi^n$
denote both natural transformations
$\rT^n \rS \rightarrow \rS\rT^n$ and
$\rT \rS^n \rightarrow \rS^n \rT$ obtained by repeated application of
$\chi$ (up to horizontal composition of identities),
where $\chi^0 = {\idty}$.  We furthermore define
natural transformations
$$
	t^\bT_n \colon
	\rCC_\bT(\rN,\rM)_n \rightarrow \rCC_\bT(\rN,\rM)_n,
	\quad t^\bS_n \colon
	\rCC_\bS^\op(\rN,\rM)_n \rightarrow \rCC_\bS^\op(\rN,\rM)_n
$$
by the diagrams
$$
	\xymatrix@C=3.5em{ \rN \rT^{n} \rS\rM
	\ar[r]^-{\rN  \chi^n \rM} &
	\rN \rS\rT^n \rM
	\ar[d]^-{\lambda\rT^n \rM}
	\\
	\rN \rT^{n+1}\rM
	\ar[u]^-{\rN \rT^n \rho}
	\ar@{.>}[r]_-{t^\bT_n}  & \rN \rT^{n+1} \rM }
	\quad\quad\quad
	\xymatrix@C=3.5em{ \rN\rT\rS^n\rM
	\ar[r]^-{\rN \chi^n \rM} & \rN\rS^n \rT \rM
	\ar[d]^-{\rN \rS^n \rho}
	\\ \rN\rS^{n+1} \rM \ar[u]^-{\lambda \rS^n \rM}
	\ar@{.>}[r]_-{t_n^\bS} &\rN\rS^{n+1} \rM }
$$

\begin{thm}\label{dup}
The simplicial functors
$
	\rCC_\bT(\rN,\rM)
$
and
$
	\rCC^\op_\bS(\rN, \rM)
$
become duplicial functors with duplicial
operators given by $t^\bT$ respectively
$t^\bS$.
\end{thm}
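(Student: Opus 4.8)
The plan is to check the defining duplicial identities directly for $t^\bT$ and then to obtain the statement for $t^\bS$ by duality. Throughout I would use three sets of relations: the naturality of $\chi$, $\rho$, $\lambda$ and of the (co)units and comultiplications of $\bT$ and $\bS$; the four axioms of the comonad distributive law $\chi$, which govern how $\chi$ commutes with $\epsilon^\rT,\epsilon^\rS,\Delta^\rT,\Delta^\rS$; and the axioms of the right $\chi$-coalgebra $(\rM,\rho)$ and the left $\chi$-coalgebra $(\rN,\lambda)$ of Definition~\ref{coalgdef}. The first preparatory step is to establish, by induction on $n$, the ``crossing lemmas'' for the iterated natural transformation $\chi^n$: commuting $\epsilon^\rT$ past $\chi^n$ lowers the exponent to $\chi^{n-1}$, while commuting $\Delta^\rT$ past $\chi^n$ raises it to $\chi^{n+1}$, and symmetrically for $\epsilon^\rS,\Delta^\rS$. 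These are exactly the compatibilities that let one slide a face or a degeneracy map of the bar resolution through the rotation $t^\bT_n=\lambda\rT^n\rM\circ\rN\chi^n\rM\circ\rN\rT^n\rho$.

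For the inner face relations $d_i t = t\,d_{i-1}$, $1\le i\le n$, the counit $\epsilon^\rT$ occurring in $d_i$ lies to the left of the factor that $t$ rotates, so I would push it through $\chi^n$ by the crossing lemma and the distributive-law counit axiom, turning $\chi^n$ into $\chi^{n-1}$, and then close the resulting square using naturality of $\rho$ and $\lambda$; the inner degeneracy relations $s_j t = t\,s_{j-1}$, $1\le j\le n$, are handled in the same manner, now with $\Delta^\rT$ (which raises rather than lowers the exponent of $\chi$) in place of $\epsilon^\rT$. The boundary face relation is clean: since $t$ ends with $\lambda\rT^n\rM$, we have $d_0 t = (\rN\epsilon^\rT\circ\lambda)\rT^n\rM\circ\rN\chi^n\rM\circ\rN\rT^n\rho$, and the counit axiom for the left $\chi$-coalgebra, the iterated counit axiom of $\chi$, and the counit axiom $\epsilon^\rS\rM\circ\rho=\epsilon^\rT\rM$ for the right $\chi$-coalgebra collapse this successively to $\rN\rT^n\epsilon^\rT\rM=d_n$.

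The genuine obstacle is the remaining boundary relation $s_0 t = t^2 s_n$, since its right-hand side is a double rotation. Here I would feed the comultiplication $\Delta^\rT$ of $s_0$ into $t$, use the comultiplication axiom of the right $\chi$-coalgebra to split $\rho$ across the duplicated factor, and then repeatedly apply the $\Delta^\rT$- and $\Delta^\rS$-crossing lemmas together with the coassociativity of $\Delta^\rT,\Delta^\rS$ in order to reorganise the single composite $s_0 t_n$, which carries one $\rho$, one $\lambda$ and $\chi^{n+1}$, into $t_{n+1}\circ t_{n+1}\circ s_n$, which carries two $\rho$'s, two $\lambda$'s and two crossings. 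Keeping the bookkeeping of these iterated crossings consistent with the shifting boundary indices is the delicate point, and the string-diagram calculus of B\"ohm--\c Stefan \cite{MR2415479} makes the rearrangement transparent.

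Finally, the statement for $\rCC^\op_\bS(\rN,\rM)$ with operator $t^\bS$ follows by dualising the entire argument: interchanging the roles of $\bT$ and $\bS$, of $\rho$ and $\lambda$, and passing to the opsimplicial structure carries the defining diagram of $t^\bS_n$ to that of $t^\bT_n$, while a comonad distributive law $\chi\colon\bT\bS\to\bS\bT$ is symmetric enough to supply the same crossing lemmas on the $\bS$-side. This is precisely the self-duality highlighted in the introduction, so no separate computation is needed.
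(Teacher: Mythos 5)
Your proposal is correct and is in substance the very argument the paper relies on: the paper's proof of Theorem~\ref{dup} simply cites B\"ohm--\c Stefan \cite{MR2415479} for the dupliciality of $t^\bT$ and remarks that $t^\bS$ follows by a ``slight modification'' of their proof, and your direct verification (naturality, the four distributive-law axioms iterated into crossing lemmas for $\chi^n$, the two $\chi$-coalgebra axioms, with $s_0t=t^2s_n$ as the one genuinely delicate identity) together with your formal duality for the $\bS$-side is exactly that cited argument made explicit. The only point to make precise when executing $s_0t_n=t^2_{n+1}s_n$: before the right coalgebra axiom can split $\rho$, you must first invoke the left coalgebra's comultiplication axiom $\rN\Delta^\rT\circ\lambda=\lambda\rT\circ\rN\chi\circ\lambda\rS\circ\rN\Delta^\rS$ to trade the $\Delta^\rT$ produced by $s_0$ (which lands on the output of $\lambda$, far from $\rho$) for a $\Delta^\rS$ that can then be pushed through the crossings towards $\rho$; this step is forced, since the right-hand side carries two copies of $\lambda$ and this is the only axiom that changes their number, so it deserves explicit mention alongside the right coalgebra axiom you name.
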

\begin{proof}
The first operator being duplicial is exactly the case considered
in \cite{MR2415479}, and the second follows from a slight
modification of their proof.  \end{proof}

\subsection{Cyclicity}
\label{evidenziatore2}
For each $n \ge 0$, we define a morphism
$R_n \colon \rN\rT^{n+1} \rM \rightarrow \rN\rS^{n+1} \rM$
in the following way. For
each $0 \le i \le n$, let $r_{i,n}$ denote the morphism
$$
	\xymatrix{ \rN \rS^i \rT^{n+1-i} \rM \ar[rr]^-{ \rN \rS^i
\rT^{n-i} \rho} && \rN \rS^i\rT^{n-i}\rS \rM \ar[rr]^-{N \rS^i
\chi^{n-i} \rM} & & \rN \rS^{i+1}\rT^{n-i} \rM.}
$$
Then set
$$
	R_n := r_{n,n} \circ \cdots \circ r_{0,n}.
$$
Similarly,
we can define a morphism $L_n \colon  \rN\rS^{n+1} \rM \rightarrow
\rN\rT^{n+1} \rM$ whose definition involves the left
$\chi$-coalgebra structure $\lambda$ on $\rN$.

\begin{prop}\label{cyc}
The above construction defines
two morphisms
\begin{align*}
	\xymatrix{\rCC_\bT(\rN,\rM) \ar[r]^-{R }  &
\rCC_\bS^\op(\rN,\rM)}, \qquad
	\xymatrix{\rCC_\bS^\op(\rN,\rM)
	\ar[r]^-{L } & \rCC_\bT(\rN,\rM)}
\end{align*}
of duplicial functors. Furthermore, $L
\circ R  = {\idty}$ if and only if $\rCC_\bT
(\rN,\rM)$ is cyclic, and $R  \circ L  = {\idty}$
if and only if $ \rCC_\bS^\op(\rN,\rM)$ is cyclic.
\end{prop}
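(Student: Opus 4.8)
The plan is to deduce both ``iff'' statements from a single pair of identities valid in each degree $n$, namely
$$
L_n\circ R_n=(t^\bT_n)^{n+1},\qquad R_n\circ L_n=(t^\bS_n)^{n+1}.
$$
Granting these, the conclusion is immediate from the definition of cyclicity: $L\circ R=\idty$ in every degree is exactly the condition $(t^\bT_n)^{n+1}=\idty$ for all $n$, i.e.\ that $T=t^{n+1}=\idty$ on $\rCC_\bT(\rN,\rM)$, and symmetrically $R\circ L=\idty$ corresponds to cyclicity of $\rCC_\bS^\op(\rN,\rM)$. So the real content is (a) that $R$ and $L$ are morphisms of duplicial functors, and (b) the two power identities.

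For (a) I would write $R_n=r_{n,n}\circ\cdots\circ r_{0,n}$ with $r_{i,n}=(\rN\rS^i\chi^{n-i}\rM)\circ(\rN\rS^i\rT^{n-i}\rho)$, take the mirror-image formula $l_{i,n}=(\lambda\rT^i\rS^{n-i}\rM)\circ(\rN\chi^i\rS^{n-i}\rM)$ as the explicit form of $L_n=l_{n,n}\circ\cdots\circ l_{0,n}$, and check compatibility with the faces $\rN\rT^i\epsilon^\rT\rT^{n-i}\rM$, the degeneracies $\rN\rT^j\Delta^\rT\rT^{n-j}\rM$, and the duplicial operators $t^\bT,t^\bS$. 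Each check is a diagram chase fed by naturality, the right and left $\chi$-coalgebra axioms for $\rho$ and $\lambda$, the (co)unit and comultiplication compatibilities packaged in the comonad distributive law $\chi$, and the recursions $\chi^{m+1}=(\chi^m\rT)\circ(\rT^m\chi)$ and $\chi^{m+1}=(\rS\chi^m)\circ(\chi\rS^m)$ for the two families of iterated laws. Compatibility with $t$ is precisely the equivariances $R_n\circ t^\bT_n=t^\bS_n\circ R_n$ and $L_n\circ t^\bS_n=t^\bT_n\circ L_n$; these are the most structural of the checks and are where the Yang--Baxter hexagon for $\chi$ genuinely enters.

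The heart of the proof is (b). The starting point is that a single rotation factors as $t^\bT_n=(\lambda\rT^n\rM)\circ r_{0,n}$ --- one $\rho$-slide of the innermost $\rT$ to the $\rN$-end, then one $\lambda$-conversion --- with dual factorisation $t^\bS_n=(\rN\rS^n\rho)\circ(\rN\chi^n\rM)\circ l_{0,n}$. I would then argue by induction on $n$, peeling off one tensor factor at a time: converting all $n+1$ copies of $\rT$ into $\rS$ (the map $R_n$) and converting them all back (the map $L_n$) sweeps out, through the slides $\chi^m$, exactly the braid realised by applying $t^\bT_n$ a total of $n+1$ times, each coalgebra axiom being used to split one $\rho$ or $\lambda$ into the comultiplication that feeds the next slide; the base case $n=0$ already displays the mechanism, since $L_0\circ R_0=(\lambda\rM)\circ(\rN\rho)=t^\bT_0$ and $R_0\circ L_0=(\rN\rho)\circ(\lambda\rM)=t^\bS_0$. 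I expect the main obstacle to be the bookkeeping inside this telescoping: one must track how the partial $\rS$-stacks produced by the successive $r_{i,n}$ are dismantled by the successive $l_{i,n}$, match each adjacent pair ``$\rho$-slide then $\lambda$-conversion'' to one factor of $t^\bT_n$, and reconcile the two readings of $\chi^m$ as $\rT^m\rS\to\rS\rT^m$ and as $\rT\rS^m\to\rS^m\rT$ via Yang--Baxter. Framing the induction so that it simultaneously records a mixed object $\rN\rS^a\rT^b\rM$ and the number of rotations already accumulated should keep this combinatorics under control.
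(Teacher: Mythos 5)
Your proposal is correct and follows essentially the same route as the paper: both reduce the two ``iff'' statements to the degreewise identities $(L\circ R)_n=(t_n^\bT)^{n+1}$ and $(R\circ L)_n=(t_n^\bS)^{n+1}$, and both verify that $R$ and $L$ are duplicial morphisms by direct computation from the $\chi$-coalgebra axioms and the naturality of $\lambda$, $\rho$, $\chi$ (which the paper organises via string diagrams rather than diagram chases; note that only naturality, not a Yang--Baxter condition, is needed there, since a single distributive law is involved). Your explicit factorisation of $L_n$ and the telescoping/naturality argument for the power identities fill in exactly the ``straightforward computation'' the paper leaves to the reader.
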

\begin{proof}
This is verified by
straightforward computation.  However, it is convenient
to use a diagrammatic calculus as, {\em e.g.},~in
\cite{MR2415479}, in which natural transformations
$\rN\rV \rM \rightarrow \rN\rW\rM$ are visualised as string
diagrams, where $\rV$ and $\rW$ are words in $\rS,\rT$.  For
example $t^\bT$ will be represented by the
diagram
$$
	\STRINGDIAGRAM{ {\rN \atop } \FERMION[ddddd] &
	\rT \atop \FERMION[d] & \rT \atop \FERMION[d] &  \rT \atop
\FERMION[d] & \rT \atop \FERMION[d] & \rM \atop
\FERMION[dddddd] \\ & \FERMIONddddr & \FERMIONddddr
\ar@{}[r]|{\cdots} & \FERMIONddddr &  \BOSON[r]
\FERMIONddddlll & \\ &  &  &  & & \\ & & & & & & & \\
\FERMION[dd] & & &  & & &\\ & \BOSON[l] \FERMION[d] &
\FERMION[d] & \FERMION[d] \ar@{}[r]|\cdots  &
\FERMION[d] & & & \\ \atop \rN & \atop \rT & \atop \rT &
\atop \rT &  \atop \rT & \atop \rM } $$

Crossing of strings represents the distributive law $
\chi $ and the bosonic propagators represent the $ \chi
$-coalgebra structures $ \lambda \colon\rN\rS \rightarrow\rN\rT$
respectively $ \rho \colon \rT\rM \rightarrow \rS\rM$.

As a demonstration, the relation $R t^\bT=
t^\bS R$ for $n=2$ becomes
$$ \STRINGDIAGRAM{
	{\rN \atop } \FERMION[ddddd] & \rT \atop \FERMION[d] &
	\rT \atop \FERMION[d] & \rT \atop \FERMION[d] & \rM \atop
	\FERMION[dddddd] \\
% END O\rF LINE 1	
	& \FERMIONddddr & \FERMIONddddr & \BOSON[r]
\FERMIONddddll& \\
% END O\rF LINE 2
	&  &  &  & & \\
% EOL
	& & & & & & & \\
% END O\rF LINE
	\FERMION[dd] & & &  & & &\\
% END O\rF LINE
	& \BOSON[l] \FERMION[d] & \FERMION[d] & \FERMION[d]
& & & \\
%EOL
\FERMION[ddddd] & \FERMION[d] & \FERMION[d] &
\FERMION[d] & \FERMION[d] \FERMION[dddddd] \\
% END O\rF LINE 1	
	& \FERMIONddddr & \FERMIONddddr & \BOSON[r]
\FERMIONddddll& \\
% END O\rF LINE 2
	&  &  &  & & & = & &\\
% EOL
	& & & & & & & \\
% END O\rF LINE
	\FERMION[dd] & & &  & & &\\
% END O\rF LINE
	\FERMION[ddddd] & \FERMION[ddddd] & \FERMIONddddr &
\FERMIONddddl \BOSON[r] & \FERMION[ddddd] & & \\
% EOL
	& & & & & & & \\
% EOL
	& & & & & & & \\
% EOL
	& & & & & & & \\
% EOL
	& & \FERMION[d] & \FERMION[d] \BOSON[r] & & & \\
% EOL
	\atop \rN & \atop\rS & \atop\rS & \atop\rS & \atop \rM }
%%%% %%%% 2ND PICT\rURE %%%
\STRINGDIAGRAM{
%EOL
{\rN \atop } \FERMION[ddddd] & {\rT \atop }\FERMION[d]
& {\rT
\atop }\FERMION[d] &  {\rT \atop }\FERMION[d] & {\rM \atop
}\FERMION[d] \FERMION[dddddd] \\
% END O\rF LINE 1	
	& \FERMIONddddr & \FERMIONddddr & \BOSON[r]
\FERMIONddddll& \\
% END O\rF LINE 2
	&  &  &  & & \\
% EOL
	& & & & & & & \\
% END O\rF LINE
	\FERMION[dd] & & &  & & &\\
% END O\rF LINE
	\FERMION[ddddd] & \FERMION[ddddd] & \FERMIONddddr &
\FERMIONddddl \BOSON[r] & \FERMION[ddddd] & & \\
% EOL
	& & & & & & & \\
% EOL
	& & & & & & & \\
% EOL
	& & & & & & & \\
% EOL
	& & \FERMION[d] & \FERMION[d] \BOSON[r] & & & \\
% EOL
	\FERMION[ddddd] & \FERMION[d] & \FERMION[d] &
\FERMION[d] & \FERMION[dddddd] \\
% END O\rF LINE 1	
	& \BOSON[l] \FERMIONddddrr & \FERMIONddddl &
\FERMIONddddl& \\
% END O\rF LINE 2
	&  &  &  & & \\
% EOL
	& & & & & & & \\
% END O\rF LINE
	\FERMION[dd] & & &  & & &\\
% END O\rF LINE
	& \FERMION[d] & \FERMION[d] & \FERMION[d] \BOSON[r]
& & \\ \atop \rN & \atop\rS & \atop\rS & \atop\rS & \atop \rM}
$$ which reflects the naturality of $ \lambda, \rho$,
and
$\chi$.  Analogously, the identities $Rd_i=d_iR$ and
$Rs_j=s_jR$ follow from the commutative diagrams in
Definition~\ref{coalgdef}, which are represented
diagrammatically by
$$
	\STRINGDIAGRAM{ \rT \atop
	\FERMION[dd] & \rM \atop \FERMION[dd] & & & & \rT \atop
\FERMION[dd] & \rM \atop \FERMION[dd]\\ & \BOSON[l] & & =
& & \\ \bullet & \atop \rM & & & & \bullet & \atop \rM}
$$
respectively
$$
	\STRINGDIAGRAM{ & \rT \atop
\FERMION[dddd] & \rM \atop \FERMION[dddd]\\ & \BOSON[r] &
\\ & \bkomu[ddl]& \\ & & \\ \FERMION[ddd] &
\FERMION[ddd] & \FERMION[ddd] & & = & & \\  	& & \\ & &
\\ \atop\rS & \atop\rS & \atop \rM}
	\STRINGDIAGRAM{ & \rT
\atop \FERMION[ddd] & \rM \atop \FERMION[ddddddd]\\ &
\bkomu[ddl] & \\ & \BOSON[r] & \\ \FERMIONddddr &
\FERMIONddddl & \\ & & \\ & & \\ & \BOSON[r] & \\ \atop
\rS & \atop\rS & \atop \rM}
$$
Similarly, $L$ is a morphism
of duplicial objects, and one has $(L \circ R)_n =
(t_n^\bT)^{n+1}$ and $(R \circ L)_n =
(t_n^\bC)^{n+1}$.  \end{proof}

\subsection{The case of entwined coalgebras}
As we had announced above, entwined coalgebras
lead to trivial simplicial objects:

\begin{prop}
\label{trivcontract}
Let $\chi \colon
\bT\bS \rightarrow \bS\bT$ be a comonad distributive
law on a category $\cB$, and let $(\rM, \cX,
\rho)$ and $(\rN, \cY, \lambda)$ be left and right
$\chi$-coalgebras respectively. Suppose also that
$\cY$ is an abelian category.  If either of
$(\rN,\cY, \lambda),(\rM, \cX, \rho)$ is {entwined},
then the chain complexes associated to both
$\rCC_\bT (\rN, \rM)$ and $\rCC_\bS^\op (\rN, \rM)$
are contractible.
\end{prop}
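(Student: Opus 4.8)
The plan is to exhibit an explicit contracting homotopy for the relevant simplicial objects by exploiting the extra structure coming from the entwined (op)coalgebra. Recall that by Proposition~\ref{triv}, an entwined $\chi$-coalgebra is one that arises from a genuine $\bS$-coalgebra structure $\nabla^\rS \colon \rM \rightarrow \rS\rM$ (respectively a $\bT$-opcoalgebra structure $\nabla^\rT \colon \rN \rightarrow \rN\rT$). The key observation is that such a structure provides exactly the data needed for an \emph{extra degeneracy} on the bar resolution. Indeed, for a comonad $\bT$ the bar resolution $\rBB(\bT,\rM)$ is built from the comultiplication and counit of $\bT$, and a $\bT$-coalgebra structure $\nabla \colon \rM \rightarrow \rT\rM$ supplies a map $\rT^{n+1}\rM \rightarrow \rT^{n+2}\rM$ in each degree which, together with the comonad axioms, satisfies the simplicial identities of an extra degeneracy $s_{-1}$. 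This is the standard fact that an augmented, coaugmented cosimplicial-type resolution coming from a comonad is contractible once the object carries a coalgebra structure splitting the counit.

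First I would treat the case where $(\rM,\cX,\rho)$ is entwined, so that $\rho = \epsilon^\rT \nabla^\rS$ for an $\bS$-coalgebra structure $\nabla^\rS$. I would write down the candidate extra degeneracy on $\rCC_\bT(\rN,\rM) = \rN\rBB(\bT,\rM)$ and check the extra-degeneracy identities using the $\bS$-coalgebra axioms and naturality; composing with $\rN$ on the left preserves these identities since they are natural transformations. The existence of an extra degeneracy forces the associated chain complex (under the Dold--Kan correspondence recalled in Section~\ref{duplobjsec}, which needs $\cY$ abelian) to be contractible, because the extra degeneracy is precisely a contracting homotopy for the boundary $b=\sum_{i=0}^n(-1)^i d_i$. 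The dual argument, using the $\bT$-opcoalgebra structure $\nabla^\rT$ to build an extra degeneracy on $\rBB^\op(\bS,\rM)$, handles $\rCC_\bS^\op(\rN,\rM)$, and symmetrically one treats the case where $(\rN,\cY,\lambda)$ is the entwined coefficient.

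The main obstacle will be verifying that the \emph{same} coalgebra structure simultaneously contracts \emph{both} $\rCC_\bT(\rN,\rM)$ and $\rCC_\bS^\op(\rN,\rM)$, since an $\bS$-coalgebra structure on $\rM$ directly furnishes an extra degeneracy only for the $\bS$-resolution, not obviously for the $\bT$-resolution. Here I expect to use the distributive law $\chi$ to transport the coalgebra structure: via Proposition~\ref{triv} the entwined $\bS$-coalgebra also defines a right $\chi$-coalgebra with $\rho = \epsilon^\rT\nabla^\rS$, and the morphisms $R$ and $L$ of duplicial objects from Proposition~\ref{cyc} intertwine the two resolutions. The cleanest route is therefore to establish contractibility of one resolution directly from the extra degeneracy, and then to argue that the duplicial morphisms $R,L$ induce quasi-isomorphisms (in fact chain-homotopy equivalences) between the two associated complexes, so that contractibility of one forces contractibility of the other. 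Checking that the extra-degeneracy identities survive the insertion of $\chi$ in the definitions of $t^\bT$ and $t^\bS$ is the delicate combinatorial point, and is most transparently done in the string-diagram calculus of Proposition~\ref{cyc}.
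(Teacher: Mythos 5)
Your first step is sound and coincides with the paper's: an entwined coefficient carries a genuine (op)coalgebra structure (Proposition~\ref{triv}), and inserting that structure map gives an extra degeneracy, hence a contracting homotopy of the associated chain complex once $\cY$ is abelian. But watch the direction: if $(\rM,\cX,\rho)$ is entwined, then $\nabla^\rS\colon\rM\to\rS\rM$ inserts an $\rS$ at the $\rM$-end and therefore directly contracts $\rCC_\bS^\op(\rN,\rM)$, \emph{not} $\rCC_\bT(\rN,\rM)$ as you first assert; likewise a $\bT$-opcoalgebra structure $\nabla\colon\rN\to\rN\rT$ directly contracts $\rCC_\bT(\rN,\rM)$ via $\nabla\rT^{n+1}\rM$. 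You notice this mismatch yourself, but it matters, because it is exactly the second complex where your argument then breaks down.

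The genuine gap is your transfer mechanism for that other complex. You propose that $R$ and $L$ from Proposition~\ref{cyc} are quasi-isomorphisms, ``in fact chain-homotopy equivalences,'' so that contractibility passes across. Nothing supports this: the proof of Proposition~\ref{cyc} gives $(L\circ R)_n=(t_n^\bT)^{n+1}$ and $(R\circ L)_n=(t_n^\bS)^{n+1}$, and these equal the identity precisely when the duplicial objects are cyclic; in general no homotopy between $L\circ R$ (or $R\circ L$) and the identity is available, so acyclicity of one complex says nothing about the other. (Arguing a posteriori that both are acyclic, so that any map between them is a quasi-isomorphism, would be circular.) The paper instead writes down the second contracting homotopy explicitly, using the distributive law \emph{and the structure map of the other, non-entwined coefficient}: if $\rN$ is entwined with $\nabla\colon\rN\to\rN\rT$, then
$$
\rN\rS^{n+1}\rho\;\circ\;\rN\chi^{n+1}\rM\;\circ\;\nabla\rS^{n+1}\rM\;\colon\;\rN\rS^{n+1}\rM\longrightarrow\rN\rS^{n+2}\rM
$$
contracts $\rCC_\bS^\op(\rN,\rM)$: here $\nabla$ inserts a $\rT$, $\chi^{n+1}$ shuffles it past the $\rS$'s, and $\rho$ converts it into an $\rS$. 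Dually, if $\rM$ is entwined with $\nabla^\rS\colon\rM\to\rS\rM$, then $\lambda\rT^{n+1}\rM\circ\rN\chi^{n+1}\rM\circ\rN\rT^{n+1}\nabla^\rS$ contracts $\rCC_\bT(\rN,\rM)$, using $\lambda$. Verifying the homotopy identities needs only naturality, the (op)coalgebra axioms, and Definition~\ref{coalgdef}; the comparison maps $R$, $L$ and the duplicial operators play no r\^ole.
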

\begin{proof}
If $(\rN, \cY, \lambda)$
is {entwined}, there is a $\bT$-opcoalgebra
structure $\nabla \colon \rN \rightarrow\rN\rT$ on $\rN$.
The morphisms $\nabla\rT^n \rM \colon\rN\rT^{n+1}\rM \rightarrow
\rN\rT^{n+2}\rM$ provide a contracting homotopy for the
complex associated to $\rCC_\bT (\rN, \rM)$,
and the morphisms
$$
	\xymatrix@=4em{ \rN\rS^{n+1} \rM
	\ar[r]^-{\nabla\rS^{n+1} \rM}
	&\rN\rT\rS^{n+1}\rM
	\ar[r]^-{\rN\chi^{n+1}\rM} &\rN\rS^{n+1} \rT \rM \ar[r]^-{\rN\rS^{n+1}
\rho} &\rN\rS^{n+2} \rM }
$$
provide a contracting homotopy
for the complex associated to $\rCC_\bS^\op
(\rN,\rM)$. The other case is similar.
\end{proof}

\subsection{Twisting by $1$-cells}
\label{twistsec}
In this
section, we show how factorisations of distributive
laws as considered in \cite{2} give rise to morphisms
between duplicial functors of the form considered
above.  To this end, fix a $1$-cell in the $2$-category
$\cDist$:
$$
\xymatrix@C=1.5em{ & \cB \ar@{}[r]^{\bT}="a"
\ar@{}[l]_{\bS}="b"  \ar[d]^-{(\Sigma, \sigma,
\gamma)}& \\ & \cD \ar@{}[r]_{\bG}="c"
\ar@{}[l]^{\bC}="d" & \ar @{.}@/_/ "a";"b"
|{\chi} \ar @{.}@/^/ "c";"d" |{\tau} } $$

\begin{lem}
\label{twist}
Let $(\rM, \cX, \rho)$ be
a right $\chi$-coalgebra. Then $(\Sigma \rM, \cX,
\gamma \rM \circ \Sigma \rho \circ \sigma \rM )$ is a right
$\tau$-coalgebra.
\end{lem}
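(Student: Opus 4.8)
The plan is to verify directly that the proposed structure map $\rho' := \gamma\rM \circ \Sigma\rho \circ \sigma\rM \colon \rG\Sigma\rM \to \rC\Sigma\rM$ satisfies the two axioms of a right $\tau$-coalgebra from Definition~\ref{coalgdef} (with $\bG,\bC,\tau$ now playing the r\^oles of $\bT,\bS,\chi$), namely the counit law $\epsilon^\rC\Sigma\rM \circ \rho' = \epsilon^\rG\Sigma\rM$ and the coassociativity law $\Delta^\rC\Sigma\rM \circ \rho' = \rC\rho' \circ \tau\Sigma\rM \circ \rG\rho' \circ \Delta^\rG\Sigma\rM$. The tools at my disposal are the comultiplication and counit compatibilities of the lax morphism of comonads $\sigma$ and of the colax morphism of comonads $\gamma$, the two axioms for the right $\chi$-coalgebra $\rho$, the Yang-Baxter equation for the $1$-cell $(\Sigma,\sigma,\gamma)$, and the interchange law for horizontal composites.

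The counit law is a short computation that I would run from right to left. Whiskering the counit compatibility $\epsilon^\rC\Sigma \circ \gamma = \Sigma\epsilon^\rS$ of $\gamma$ by $\rM$ collapses the outer factor, turning $\epsilon^\rC\Sigma\rM \circ \rho'$ into $\Sigma(\epsilon^\rS\rM \circ \rho) \circ \sigma\rM$; the counit axiom $\epsilon^\rS\rM \circ \rho = \epsilon^\rT\rM$ of $\rho$ replaces the bracketed morphism by $\epsilon^\rT\rM$, and the counit compatibility $\Sigma\epsilon^\rT \circ \sigma = \epsilon^\rG\Sigma$ of $\sigma$ then produces $\epsilon^\rG\Sigma\rM$, as required.

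The substance lies in the coassociativity law, which I would establish by rewriting both sides into a common normal form. On the left, whiskering the comultiplicativity $\Delta^\rC\Sigma \circ \gamma = \rC\gamma \circ \gamma\rS \circ \Sigma\Delta^\rS$ of $\gamma$ and then inserting the coassociativity $\Delta^\rS\rM \circ \rho = \rS\rho \circ \chi\rM \circ \rT\rho \circ \Delta^\rT\rM$ of $\rho$ rewrites $\Delta^\rC\Sigma\rM \circ \rho'$ as $\rC\gamma\rM \circ \gamma\rS\rM \circ \Sigma\rS\rho \circ \Sigma\chi\rM \circ \Sigma\rT\rho \circ \Sigma\Delta^\rT\rM \circ \sigma\rM$. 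On the right, I would expand $\rG\rho'$ and $\rC\rho'$ and recognise the middle composite $\rC\sigma\rM \circ \tau\Sigma\rM \circ \rG\gamma\rM$ as the right-hand side of the Yang-Baxter equation whiskered by $\rM$; replacing it by $\gamma\rT\rM \circ \Sigma\chi\rM \circ \sigma\rS\rM$ is the decisive move. One application of the interchange law for $\gamma$ against $\rho$, in the form $\rC\Sigma\rho \circ \gamma\rT\rM = \gamma\rS\rM \circ \Sigma\rS\rho$, then matches the outer factors with the left-hand normal form, and the whole identity reduces to $\Sigma\rT\rho \circ \Sigma\Delta^\rT\rM \circ \sigma\rM = \sigma\rS\rM \circ \rG\Sigma\rho \circ \rG\sigma\rM \circ \Delta^\rG\Sigma\rM$. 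This residual equation follows from a second interchange, $\sigma\rS\rM \circ \rG\Sigma\rho = \Sigma\rT\rho \circ \sigma\rT\rM$, together with the comultiplicativity $\sigma\rT\rM \circ \rG\sigma\rM \circ \Delta^\rG\Sigma\rM = \Sigma\Delta^\rT\rM \circ \sigma\rM$ of $\sigma$.

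The only non-formal ingredient is the Yang-Baxter equation: it is precisely what is needed to transport the crossing $\Sigma\chi\rM$ produced by the coassociativity of $\rho$ past $\sigma$ and $\gamma$, and I expect this step to be the heart of the argument. Everything else is bookkeeping with the (co)unit and comultiplication laws and the interchange law, so the main practical obstacle is keeping track of the whiskerings, i.e.\ which functor each natural transformation is applied to and on which side. Should the index juggling become unwieldy, I would instead render the computation in the string-diagram calculus used in the proof of Proposition~\ref{cyc}, drawing $\sigma$ and $\gamma$ as trivalent vertices and $\chi,\tau$ as crossings, which makes the Yang-Baxter move visually evident and absorbs the naturality steps automatically.
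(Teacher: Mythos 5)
Your proof is correct: the counit verification and the coassociativity computation (normalising both sides via the colax/lax compatibilities of $\gamma$ and $\sigma$, the $\chi$-coalgebra axioms for $\rho$, two naturality interchanges, and one application of the Yang-Baxter equation to replace $\rC\sigma\rM \circ \tau\Sigma\rM \circ \rG\gamma\rM$ by $\gamma\rT\rM \circ \Sigma\chi\rM \circ \sigma\rS\rM$) all check out, and the Yang-Baxter step is indeed the only non-formal ingredient. The paper itself does not write out any argument---it merely cites \cite{2}, where the case $\chi = \tau$ is proved, and asserts that ``the same proof applies'' to the general $1$-cell setting---so your computation is precisely the direct verification that the paper delegates to that reference, carried out in full rather than by citation.
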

\begin{proof}
This is
proved for the case that $\chi = \tau$ in \cite{2}, but the
same proof applies to this slightly more general
situation.
\end{proof}

Dually, left $\tau$-coalgebras $(\rN, \cY, \rho)$
define left $\chi$-coalgebras $(\rN\Sigma, \cY, \rN
\sigma \circ \lambda \Sigma \circ \rN \gamma)$.  The
following diagram illustrates the situation:
$$
\xymatrix@C=1.5em{ &&& \cB \ar@{.>}[llld]
\ar@{}[r]^{\bT}="a" \ar@{}[l]_{\bS}="b"
\ar[d]^-{(\Sigma, \sigma, \gamma)}& & &  \cX
\ar[lll]_-{(\rM, \rho)} \ar@{.>}[llld]\\ \cY &&&\ar[lll]^-{(\rN, \lambda)} \cD \ar@{}[r]_{\bG}
="c" \ar@{}[l]^{\bC}="d" & & & & \ar @{.}@/_/
"a";"b" |{\chi} \ar @{.}@/^/ "c";"d" |{\tau} } $$ The
dotted arrows represent the induced $\chi$-coalgebras
from Lemma~\ref{twist}.

Hence Theorem~\ref{dup} and Lemma~\ref{twist} yield
duplicial structures on the simplicial functors $$
\rCC_\bT(\rN\Sigma,\rM),\quad \rCC_\bS^\op(\rN\Sigma,\rM),
\quad \rCC_\bG(\rN,\Sigma \rM),
\quad \rCC_\bC^\op(\rN,\Sigma \rM),
$$
and from Proposition~\ref{cyc} we obtain morphisms
\begin{align*}
	&\xymatrix{ \rCC_\bT(\rN\Sigma,\rM)
	\ar[r]^{R^\chi}  &
	\rCC_\bS^\op(\rN\Sigma,\rM), } &
	\xymatrix{ \rCC_\bS^\op(\rN\Sigma,\rM)
	\ar[r]^{L^\chi} & \rCC_\bT(\rN\Sigma,\rM),} \\
	&\xymatrix{ \rCC_\bG(\rN,\Sigma \rM) \ar[r]^-{R^\tau}
& \rCC_\bC^\op(\rN,\Sigma \rM),} & \xymatrix{
\rCC_\bC^\op(\rN,\Sigma \rM) \ar[r]^-{L^\tau} &
\rCC_\bG(\rN,\Sigma \rM)}
\end{align*}
of duplicial
objects which determine the cyclicity of each functor.

Additionally, repeated application of $\sigma \colon \rG\Sigma
\rightarrow \Sigma \rT$ and $\gamma \colon \Sigma \rS
\rightarrow \rC \Sigma$ yields two duplicial morphisms
$$
\xymatrix{ \rCC_\bG(\rN,\Sigma \rM) \ar[r] &
\rCC_\bT(\rN\Sigma, \rM),} \quad \quad \quad
\xymatrix{ \rCC_\bS^\op(\rN\Sigma, \rM) \ar[r] &
\rCC_\bC^\op(\rN,\Sigma \rM).}
$$
Note that for
arbitrary functors $\rM$ and $\rN$ these are simplicial morphisms
which become duplicial morphisms if $\rM$ and $\rN$ have
coalgebra structures.

\section{Hopf monads and Hopf
algebroids}\label{brugsec}

\subsection{Opmodule adjunctions}\label{opmoduleadj}
One example of Theorem~\ref{arise} is provided by an
opmonoidal adjunction between monoidal categories:
\begin{defn}
An adjunction
$$
	\xymatrix{ (\cE,\otimes _\cE,\mathbf 1_\cE)
\ar@/^{0.5pc}/[rr]^-\rH \ar@{}[rr]|{\perp}&&
\ar@/^0.5pc/[ll]^-\rE (\cH,\otimes_\cH,\mathbf 1_\cH)
}
$$
between monoidal categories is \emph{opmonoidal}
if both $\rH$ and $\rE$ are opmonoidal functors.
\end{defn}

Some authors call these \emph{comonoidal adjunctions} or
\emph{bimonads}. Thus by definition, there are natural
transformations
$$
	\Xi \colon \rH(X \otimes_\cE Y)
	\rightarrow \rH X \otimes_\cH \rH Y,\quad
	\Psi \colon \rE(K \otimes_\cH L) \rightarrow
	\rE K \otimes_\cE \rE L,
$$
and $ \Psi$ is in fact an isomorphism, see
\cite{MR3020336,MR2793022,MR1942328,MR3175323,MR1887157}
for more information. It follows that
$$
	\rH(\mathbf 1_\cE) \otimes_\cH {-}
	\qquad \rE\rH(\mathbf 1_\cE)
	\otimes_\cE {-}
$$
form a compatible pair of comonads
as in Theorem~\ref{arise}
whose comonad structures are induced by the natural
coalgebra (comonoid) structures on $\mathbf 1_\cE$.

However, the examples we are more interested in arise
from \emph{opmodule adjunctions}
$$
	\xymatrix{ (\cA,\otimes _\cA)
\ar@/^{0.5pc}/[rr]^-{\rF} \ar@{}[rr]|{\perp}&&
\ar@/^0.5pc/[ll]^-{\rU} (\cB,\otimes_\cB)}
$$
over
$\xymatrix{\cE \ar@/^{0.3pc}/[r] & \ar@/^0.3pc/[l]
\cH,}$ {\em cf.}~\cite[Definition~4.1.1]{MR3020336}.  Here
$\cB$ is an $\cH$-module category with action
$\otimes_\cB \colon \cH \times \cB \rightarrow \cB$,
whereas $\cA$ is
an $\cE$-module category with action
$ \otimes_\cA \colon \cE
\times \cA \rightarrow \cA$, and there are natural
transformations
$$
	\Theta \colon \rF(Y \otimes_\cA Z) \rightarrow \rH Y
	\otimes_\cB \rF Z, \quad
	\Omega \colon \rU(L \otimes_\cB M) \rightarrow
	\rE L \otimes_\cA \rU M
$$
with $\Omega$ being an isomorphism
(see \cite[Proposition~4.1.2]{MR3020336}).

Now any coalgebra $C$ in $\cH$
defines a compatible pair of comonads
$$
	\rS=C \otimes_\cB -,\quad
	\rC=\rE C \otimes_\cA -
$$
on $\cB$
respectively $\cA$. It is such an
instance of Theorem~\ref{arise} that provides the
monadic generalisation of the setting from
\cite{MR2803876}, see Section~\ref{coeffsforhopf}.

\subsection{Bialgebroids and Hopf algebroids}
Opmonoidal
adjunctions can be seen as categorical
generalisations of bialgebras and more generally
(left) bialgebroids. We briefly recall the definitions
but refer to \cite{MR2553659,MR2803876} for further
details and references.

\begin{defn}
If $E$ is a $k$-algebra,
then an \emph{$E$-ring} is a $k$-algebra map $ \eta : E
\rightarrow H$.
\end{defn}

In particular, when $E=\Ae:=A \otimes_k \Aop$ is the
\emph{enveloping algebra} of a $k$-algebra $A$, then
$H$ carries two $A$-bimodule structures given by
$$
	a \lact h \ract
	b:=\eta(a \otimes_k b)h,\quad
	a \blact h \bract b:=
	h \eta(b \otimes_k a).
$$

\begin{defn}
A \emph{bialgebroid} is an $\Ae$-ring
$ \eta : \Ae \rightarrow H$
for which ${}_\lact H_\ract$
is a coalgebra in $(\Ae\cMod,\otimes_A,A)$
whose coproduct
$
	\Delta \colon H \rightarrow H_\ract \otimes_A
	{}_\lact H
$
satisfies
$$
	a \blact \Delta(h) =\Delta (h) \bract a,\quad
	\Delta
	(gh)=\Delta(g)\Delta(h),
$$
and whose counit
$	
\varepsilon \colon H \rightarrow A
$
defines a unital $H$-action on $A$ given by
%for all $a,b \in A,g,h \in H$
$h(a):=\varepsilon (a \blact h)$.
\end{defn}

Finally, by a Hopf algebroid we mean \emph{left}
rather than \emph{full} Hopf algebroid, so there is
in general no antipode \cite{3}:

\begin{defn}[\cite{MR1800718}]
A \emph{Hopf algebroid} is a bialgebroid with bijective
\emph{Galois map}
$$
	\beta \colon {}_\blact H \otimes_\Aop
H_\ract  \rightarrow H_\ract \otimes_A {}_\lact H,\quad
g \otimes_\Aop h \mapsto \Delta (g)h.  $$ \end{defn}

As usual, we abbreviate
\begin{equation}
	\label{hunger}
	\Delta (h) =: h_{(1)} \otimes_A
	h_{(2)},\qquad
	\beta^{-1}(h \otimes_A 1) =: h_+
	\otimes_\Aop h_-.
\end{equation}

\subsection{The opmonoidal adjunction}\label{bimfromhopf}
Every $E$-ring $H$ defines a forgetful functor
$$
	\rE \colon H\cMod \rightarrow E\cMod
$$
with left
adjoint $\rH=H \otimes_E -$.
In the sequel, we abbreviate
$\cH:=H\cMod$ and $\cE:=E\cMod$.
If $H$ is a bialgebroid,
then $\cH$ is monoidal with tensor product
$K \otimes_\cH L$ of
two left $H$-modules $K$ and $L$ given by the tensor
product $K \otimes_A L$ of the underlying
$A$-bimodules whose $H$-module structure is given by
$$
	h(k \otimes_\cH l):=
	h_{(1)}(k) \otimes_A h_{(2)}(l).
$$
So by definition, we have $\rE(K \otimes _\cH L) =
\rE K \otimes_A \rE L$. The opmonoidal structure $\Xi$
on $\rH$ is defined by the map \cite{MR2793022,MR3020336}
\begin{align*}
	\rH(X \otimes_A Y)=H \otimes_\Ae (X \otimes_A Y)
	&\rightarrow \rH X \otimes_\cH \rH Y= (H \otimes_\Ae
	X) \otimes_A (H \otimes_\Ae Y), \\
	h \otimes_\Ae (x \otimes_A y) &\mapsto
	(h_{(1)} \otimes_\Ae x) \otimes_A
	(h_{(2)} \otimes_\Ae y).
\end{align*}

Schauenburg proved that this establishes a bijective
correspondence between bialgebroid structures on $H$
and monoidal structures on $H\cMod$
\cite[Theorem~5.1]{MR1629385}:
\begin{thm}
The following data are equivalent for an $\Ae$-ring
$ \eta \colon \Ae \rightarrow H$:
\begin{enumerate}
\item A bialgebroid structure on $H$.
\item A monoidal structure $(\otimes,\mathbf 1)$ on ${H\cMod}$
such that the adjunction
$$
	\xymatrix{({\Ae\cMod},\otimes_A,A) \hspace{-10mm}&
	\ar@/^2mm/[r] & \ar@/^2mm/[l] & \hspace{-10mm}
	({H\cMod},\otimes,\mathbf 1)}
$$
induced by $ \eta $ is
opmonoidal.
\end{enumerate}
\end{thm}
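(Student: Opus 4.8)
The plan is to use the free--forgetful adjunction $\rH \dashv \rE$ together with the Yoneda lemma to pass between structure maps on the $\Ae$-ring $H$ and natural structure on the category $\cH = H\cMod$. Because condition (2) asks $\rE$ to be strict monoidal, $\rE(K \otimes_\cH L) = \rE K \otimes_A \rE L$, a monoidal structure of the required type consists of nothing more than (i) a natural $H$-action on each $A$-bimodule $\rE K \otimes_A \rE L$, and (ii) an $H$-action on the bimodule $A$ playing the role of unit, with associativity and unit constraints that — since $\rE$ is faithful — are forced to be the standard constraints of $A$-bimodules read as $H$-linear maps. I would organise the whole argument around matching datum (i) with a coproduct $\Delta$, datum (ii) with a counit $\varepsilon$, and the coherence constraints with the bialgebroid axioms.

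For $(1)\Rightarrow(2)$ I would, given a bialgebroid, set $K \otimes_\cH L$ equal to $\rE K \otimes_A \rE L$ with action $h(k \otimes_A l) := h_{(1)}(k) \otimes_A h_{(2)}(l)$, and take $\mathbf 1 = A$ with $h(a) := \varepsilon(a \blact h)$. Here the condition $a \blact \Delta(h) = \Delta(h) \bract a$ is exactly what makes this formula descend to the balanced tensor product over $A$; the multiplicativity $\Delta(gh) = \Delta(g)\Delta(h)$ makes it an $H$-action; coassociativity produces the associator and the counit axioms produce the unitors. The functor $\rE$ is then strict monoidal by construction, and the map $\Xi$ displayed above supplies the opmonoidal structure on $\rH$ (so that $\Psi$ is the identity), whence the adjunction is opmonoidal.

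For the converse I would recover the structure maps by evaluating on free modules: put $\Delta(h) := h \cdot (1_H \otimes_A 1_H)$, acting on $\rH(\Ae) = H$ in each tensor slot, and $\varepsilon(h) := h(1_A)$ from the unit action. Representability, $\operatorname{Hom}_H(H, K) \cong \rE K$, together with naturality of the action on ${-}\otimes_A{-}$ applied to the module maps classifying chosen elements $k, l$, then forces the formula $h(k \otimes_A l) = h_{(1)}(k)\otimes_A h_{(2)}(l)$; well-definedness over $\otimes_A$ returns the condition $a\blact\Delta(h)=\Delta(h)\bract a$, $H$-linearity of the associator returns coassociativity, and the unit constraints return the counit axioms. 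A final verification that the two assignments are mutually inverse then completes the equivalence.

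The step I expect to be the real obstacle is the bimodule bookkeeping around the Takeuchi product. The tensor product of $H$-modules is balanced over $A$ rather than over $\Ae$, so one must track all four $A$-actions $\lact, \ract, \blact, \bract$ on $H$ and check that the $H$-action on $K\otimes_A L$ is well-defined only once $\Delta(h)$ is constrained to the Takeuchi $\times_A$-subobject of $H \otimes_A H$ cut out by $a \blact \Delta(h) = \Delta(h) \bract a$. Confirming that this subobject is closed under multiplication (so that $\Delta(gh) = \Delta(g)\Delta(h)$ is even well-typed) and that the iterated coproduct lands in the correct threefold balanced tensor product is where the genuine labour lies; by contrast, coherence of the monoidal constraints is automatic from Mac Lane's theorem once $\rE$ is recognised as faithful and strict monoidal.
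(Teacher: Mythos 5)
The first thing to note is that the paper does not prove this theorem at all: it is quoted from Schauenburg \cite[Theorem~5.1]{MR1629385}, so your proposal can only be measured against the original argument. In outline you follow the same route as Schauenburg: recover $\Delta(h)$ as $h\cdot(1_H\otimes_A 1_H)$ in $\rE H\otimes_A \rE H$ and $\varepsilon(h)$ as $h(1_A)$, use $\operatorname{Hom}_H(H,K)\cong \rE K$ together with functoriality of $\otimes$ to force the diagonal action formula, and match the Takeuchi condition $a\blact\Delta(h)=\Delta(h)\bract a$ with well-definedness over $\otimes_A$. That skeleton is correct, and your closing remark that $\Delta(g)\Delta(h)$ is only well-typed on the Takeuchi product identifies a genuine subtlety.

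There are, however, two gaps, both stemming from the same misreading of condition (2). First, an opmonoidal adjunction does not make $\rE$ strict monoidal: it provides a natural isomorphism $\Psi\colon \rE(K\otimes L)\to \rE K\otimes_A \rE L$ satisfying coherence, not an equality. This is repairable — transport the $H$-action on $K\otimes L$ along $\Psi$, and on $\mathbf 1$ along $\rE\mathbf 1\cong A$, to reduce to the strict case — but it must be said, since otherwise your implication $(2)\Rightarrow(1)$ covers only a subclass of the structures the statement quantifies over. Second, and more seriously, your claim that faithfulness of $\rE$ forces the associator and unitors of $(\otimes,\mathbf 1)$ to be the standard bimodule constraints is false. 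Quasi-bialgebras are the counterexample: for a quasi-bialgebra $H$ over $A=k$ with nontrivial associator element $\Phi$, the forgetful functor $H\cMod\to k\cMod$ is faithful and preserves the tensor product of objects and morphisms on the nose, yet the associator of $H\cMod$ is the action of $\Phi$ rather than the standard one, and $H$ need not be a bialgebra. What actually forces the constraints to be standard is the coherence axioms of the (strong) monoidal functor $(\rE,\Psi)$ — precisely the part of the opmonoidal-adjunction hypothesis that your reading of (2) discards. Once you invoke that hypothesis instead of faithfulness, the derivation of coassociativity and the counit axioms from $H$-linearity of the standard constraints goes through as you describe, and your plan becomes a correct reconstruction of Schauenburg's proof. (Faithfulness is only needed for the cheap direction: the pentagon and triangle in $H\cMod$ follow from those in $\Ae\cMod$ because morphisms in $H\cMod$ are determined by their underlying maps; Mac~Lane's coherence theorem plays no r\^ole.)
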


Consequently,
we obtain an opmonoidal monad
$$
	\rE\rH={}_\blact H_\bract
	\otimes_\Ae -
$$
on $\cE=\Ae\cMod$. This
takes the unit object $A$ to the cocentre $H
\otimes_\Ae A$ of the $A$-bimodule ${}_\blact
H_\bract$, and the comonad $\rH(\mathbf 1_\cE)
\otimes_\cE -$ is given by
$$
	(H \otimes_\Ae A) \otimes_A -,
$$
where the $A$-bimodule structure on the
cocentre is given by the actions
$\lact,\ract$ on $H$.

The lift to $\cH={H\cMod}$ takes a left
$H$-module $L$ to
$
	(H \otimes_\Ae A)
	\otimes_A L
$
with action
$$
	g ((h \otimes_\Ae 1) \otimes_A l)=
	(g_{(1)}h \otimes_\Ae 1) \otimes_A g_{(2)}l,
$$
and the
distributive law resulting from Theorem~\ref{arise} is
given by
$$
	\chi \colon g \otimes_\Ae ((h \otimes_\Ae 1)
	\otimes_A l) \mapsto (g_{(1)}h \otimes_\Ae 1) \otimes_A
(g_{(2)} \otimes_\Ae l).
$$
That is, it is the map
induced by the \emph{Yetter-Drinfel'd braiding}
$$
	H_\bract \otimes_A {}_\lact H \to H_\ract \otimes_A
	{}_\lact H	,\quad g \otimes_A h \mapsto g_{(1)}h
	\otimes_A g_{(2)}.
$$

For $A=k$, that is, when $H$ is a Hopf algebra, and
also trivially when $H=\Ae$, the monad and the comonad
on $\Ae\cMod$ coincide and are also a bimonad in the sense of
Mesablishvili and Wisbauer,
{\em cf.}~Section~\ref{wisbimonad}.  An example where the two
are different is the Weyl algebra, or more generally,
the universal enveloping algebra of a
Lie-Rinehart algebra \cite{MR1625610}. In these
examples, $A$ is commutative but
not central in $H$ in general, so ${}_\blact
H_\bract \otimes_\Ae -$ is different from $H_\ract
\otimes_A -$.

\subsection{Doi-Koppinen data}
The instance of Theorem~\ref{arise}
that we are most interested in is an opmodule
adjunction associated to the following structure:
\begin{defn}
A \emph{Doi-Koppinen datum} is a triple
$(H,C,B)$ of an $H$-module coalgebra $C$ and an $H$-comodule
algebra $B$ over a bialgebroid $H$.
\end{defn}

This means that $C$ is a coalgebra in
the monoidal category ${H\cMod}$.
Dually, the category $H\cComod$ of left $H$-comodules is
also monoidal, and this defines the notion of a
comodule algebra. Explicitly, $B$ is an $A$-ring
$
	\eta_B \colon A \rightarrow B
$
together with a coassociative coaction
$$
	\delta \colon B \rightarrow H_\ract \otimes_A
B,\quad b
	\mapsto b_{(-1)} \otimes_A b_{(0)},
$$
which is counital and an algebra map,
$$
	\eta _B (\varepsilon
	(b_{(-1)}))b_{(0)}=b,\quad
	(b d)_{(-1)} \otimes (b d)_{(0)} =
	b_{(-1)} d_{(-1)}
	\otimes b_{(0)} d_{(0)}.
$$
Similarly, as in the
definition of a bialgebroid itself, for this condition
to be well-defined one must also require
$$
	b_{(-1)}
	\otimes_A b_{(0)} \eta_B(a)= a \blact b_{(-1)}
	\otimes_A b_{(0)}.
$$

The key example that reproduces
\cite{MR2803876} is the following:

\subsection{The opmodule
adjunction}\label{associatedsec}
For any Doi-Koppinen datum $(H,C,B)$, the
$H$-coaction $ \delta $ on $B$ turns the Eilenberg-Moore adjunction
\!\!
$\xymatrix{{A\cMod} \ar@/^{0.3pc}/[r] &
\ar@/^0.3pc/[l] {B\cMod}}$
\!\!
for the monad $ \rB:=B \otimes_A - $ into an opmodule
adjunction for the opmonoidal adjunction
$\xymatrix{\cE \ar@/^{0.3pc}/[r]
& \ar@/^0.3pc/[l] \cH}$ defined in
Section~\ref{bimfromhopf}.  The $\cH$-module
category structure of $B\cMod$ is given by the left
$B$-action
$$
	b(l \otimes_A m):=
	b_{(-1)}l \otimes_A
	b_{(0)}m,
$$
where $b \in B$,
$l \in L$ (an $H$-module), and $m \in
M$ (a $B$-module).

Hence, as explained in Section~\ref{opmoduleadj},
$C$ defines a compatible pair of
comonads $ C \otimes_A - $ on ${B\cMod}$ and ${A\cMod}$. The
distributive law resulting from Theorem~\ref{arise}
generalises the Yetter-Drinfel'd braiding, as it is
given for a $B$-module $M$ by
\begin{eqnarray*}
	\chi \colon B \otimes_A (C
	\otimes_A M) &\rightarrow& C \otimes_A (B \otimes_A M),
\\
	b \otimes_A (c \otimes_A m) &\mapsto& b_{(-1)}c
\otimes _A (b_{(0)} \otimes_A m).
\end{eqnarray*}

\subsection{The main example}\label{coeffsforhopf}
If $H$ is a bialgebroid, then
$C:=H$ is a module coalgebra with
left action given by multiplication and coalgebra
structure given by that of $H$.
If $H$
is a Hopf algebroid, then $B:=H^\mathrm{op}$ is
a comodule algebra with unit map $\eta_B(a):=\eta(1
\otimes_k a)$ and coaction
$$
 	\delta \colon H^\mathrm{op} \rightarrow
	H_\ract \otimes_A {}_\blact H^\mathrm{op}, \quad
 b \mapsto b_- \otimes_A b_+.
$$
In the sequel we write $\rB$ as $-
\otimes_\Aop H$ rather than $H^\mathrm{op} \otimes_A -$
to work with $H$ only.  Then the distributive law
becomes
\begin{eqnarray*}
\nonumber
	\chi \colon (H \otimes_A M) \otimes_\Aop H
&\rightarrow& H \otimes_A (M \otimes_\Aop H),
\\
%\label{daluzzi}
(c \otimes_A m) \otimes_\Aop b &\mapsto& b_- c \otimes _A (m
\otimes_\Aop b_+),
\end{eqnarray*}
for $b, c \in H$.

Proposition~\ref{chicoalgprop} completely
characterises the right $ \chi $-coalgebras: in this
example, they are given by right $H$-modules and left
$H$-comodules $M$ with right $ \chi $-coalgebra
structure
$$
	\rho: m \otimes_\Aop h \mapsto
	h_-m_{(-1)}
	\otimes_A m_{(0)} h_+.
$$
Recall furthermore that there is no analogue
of Proposition~\ref{chicoalgprop} for left
$ \chi $-coalgebras. However, the specific example of a Hopf
algebroid might provide some indication towards such a
result. Indeed, here one can carry out an analogous
construction of left $ \chi $-coalgebras associated to
(left-left) Yetter-Drinfel'd modules:

\begin{defn}
A \emph{Yetter-Drinfel'd module} over $H$ is a left
$H$-comodule and left $H$-module $N$ such that
for all $ h\in H,n \in N$, one has
$$
	(hn)_{(-1)} \otimes_A (hn)_{(0)}=
	h_{+(1)} n_{(-1)} h_{-} \otimes_A
	h_{+(2)}n_{(0)}.
$$
\end{defn}

Each such Yetter-Drinfel'd
module defines a left $ \chi$-coalgebra
$$
	\rN := - \otimes_H N \colon H^\mathrm{op}\cMod \rightarrow
	k\cMod
$$
whose $ \chi $-coalgebra structure is given by
\begin{equation*}
\lambda: (h \otimes_A x) \otimes_H n
	\mapsto (xn_{(-1)+} h_+ \otimes_\Aop h_- n_{(-1)-})
	\otimes_H n_{(0)}.
\end{equation*}
The resulting duplicial object
$\rCC_\bT(\rN,\rM)$ is the one
studied in \cite{MR2803876, Kow:GABVSOMOO}.

Identifying
$(- \otimes_\Aop H) \otimes_H N \cong - \otimes_\Aop N$,
the $ \chi $-coalgebra structure
  becomes
\begin{equation*}
\lambda: (h \otimes_A x) \otimes_H n
	\mapsto
	xn_{(-1)+} h_+ \otimes_\Aop h_- n_{(-1)-} n_{(0)}.
\end{equation*}
Using this identification, we give
explicit expressions of the operators $L_n$ and $R_n$
as well as $t^\mathbb{T}_n$ that appeared in
Sections~\ref{evidenziatore1} and~\ref{evidenziatore2}:
first of all,
observe that the right $H$-module structure on
$
	\rS M := H_\ract \otimes_A M
$
is given by
$$
	(h \otimes_A m)g :=
	g_- h \otimes_A mg_+,
$$
whereas the right $H$-module structure on
$
\rT M := M \otimes_\Aop H_\ract
$
is given by
$$
	(m \otimes_\Aop h)g :=
	m \otimes_\Aop hg.
$$
The cyclic operator from Section \ref{evidenziatore1}
then results as
%\begin{small}
\begin{equation*}
\begin{split}
&t^\mathbb{T}_n
(m  \otimes_\Aop h^1 \otimes_\Aop \cdots \otimes_\Aop h^n \otimes_\Aop n)
\\
&=
m_{(0)} h^1_+ \otimes_\Aop h^2_+ \otimes_\Aop \cdots \otimes_\Aop h^n_+ \\
& \qquad
\otimes_\Aop (n_{(-1)} h^n_- \cdots h^1_- m_{(-1)})_+
\otimes_\Aop (n_{(-1)} h^n_- \cdots h^1_- m_{(-1)})_-
n_{(0)},
\end{split}
\end{equation*}
%\end{small}
and for the operators $L$ and $R$ from Section \ref{evidenziatore2}
 one obtains with the help of the properties
\cite[Prop.~3.7]{MR1800718} of the translation map
\rmref{hunger}:
\begin{equation*}
\begin{split}
L_n:  (h^1 & \otimes_A \cdots \otimes_A h^{n+1}
\otimes_A m) \otimes_H n
\mapsto \\
& (mn_{(-1)+} h^1_+ \otimes_\Aop
h^1_- h^2_+ \otimes_\Aop \cdots \otimes_\Aop h^{n+1}_- n_{(-1)-})
\otimes_H n_{(0)},
\end{split}
\end{equation*}
along with
\begin{equation*}
\begin{split}
R_n: (m & \otimes_\Aop h^1
\otimes_\Aop \cdots \otimes_\Aop h^n \otimes_\Aop 1)
\otimes_H n
\mapsto \\
&
(m_{(-n-1)}  \otimes_A m_{(-n)}h^1_{(1)} \otimes_A
m_{(-n+1)}h^1_{(2)}h^2_{(1)}  \otimes_A \cdots \\
& \quad
\otimes_A m_{(-1)} h^1_{(n)}
h^2_{(n-1)} \cdots h^n_{(1)}
\otimes_A m_{(0)}) \otimes_H h^1_{(n+1)} h^2_{(n)} \cdots h^n_{(2)} n.
\end{split}
\end{equation*}
Compare these maps with those obtained in \cite[Lemma 4.10]{MR2803876}.
Hence, one has:

\begin{equation*}
\begin{split}
(L_n &\circ R_n)\big((m  \otimes_\Aop h^1
\otimes_\Aop \cdots \otimes_\Aop h^n \otimes_\Aop 1)
	\otimes_H n\big)
= \\
&
m_{(0)} (h^1_{(n+1)}h^2_{(n)} \cdots h^n_{(2)} n)_{(-1)+} m_{(-n-1)+}
\otimes_\Aop m_{(-n-1)-}  m_{(-n)+}  h^1_{(1)+}
\\
&
\qquad
\otimes_\Aop
h^1_{(1)-} m_{(-n)-}  m_{(-n+1)+}  h^1_{(2)+} h^2_{(1)+} \otimes_\Aop
\cdots
\\
&
\qquad
\otimes_\Aop h^n_{(1)-} \cdots h^1_{(n)-} m_{(-1)-} (h^1_{(n+1)} \cdots h^n_{(2)} n)_{(-1)-} (h^1_{(n+1)} \cdots h^n_{(2)} n)_{(0)}
\\
&
=
m_{(0)} \big((h^1_{(2)} \cdots h^n_{(2)} n)_{(-1)} m_{(-1)}\big)_+
\otimes_\Aop h^1_{(1)+} \otimes_\Aop \cdots
\\
&
\quad
\otimes_\Aop h^n_{(1)+}
\otimes_\Aop h^n_{(1)-} \cdots h^1_{(1)-} \big((h^1_{(2)} \cdots h^n_{(2)} n)_{(-1)} m_{(-1)} \big)_- (h^1_{(2)} \cdots h^n_{(2)} n)_{(0)}.
\end{split}
\end{equation*}
Finally, if $M \otimes_\Aop N$ is a stable anti
Yetter-Drinfel'd module \cite{MR2415479},
that is, if
$$
m_{(0)}(n_{(-1)}m_{(-1)})_+ \otimes_\Aop (n_{(-1)}m_{(-1)})_- n_{(0)} = m \otimes_\Aop n
$$
holds for all $n \in N$, $m \in M$,
we conclude by
\begin{equation*}
\begin{split}
(L_n \circ R_n)(m & \otimes_\Aop h^1 \otimes_\Aop \cdots
\otimes_\Aop h^n \otimes_\Aop n)
\\
&
=
m \otimes_\Aop h^1_{(1)+} \otimes_\Aop \cdots \otimes_\Aop h^n_{(1)+}
\otimes_\Aop h^n_{(1)-} \cdots h^1_{(1)-} h^1_{(2)} \cdots h^n_{(2)} n
\\
&
=
m \otimes_\Aop h^1 \otimes_\Aop \cdots \otimes_\Aop h^n
\otimes_\Aop n.
\end{split}
\end{equation*}
Observe that in \cite{Kow:GABVSOMOO} this
cyclicity condition was obtained for a different complex
which, however, computes the same homology.

\subsection{The antipode as a $1$-cell}
\label{viviverde}
If $A=k$, then the four actions
$\lact,\ract,\blact,\bract$ coincide and
$H$ is a Hopf algebra with antipode
$S \colon H \rightarrow H$ given by
$S(h)=\varepsilon (h_+)h_-$. The
aim of this brief section is to remark that
this defines a 1-cell that connects the two instances
of Theorem~\ref{arise} provided by the opmonoidal
adjunction and the opmodule adjunction considered
above.

Indeed, in this case we have $\Ae\cMod \cong A\cMod=k\cMod$,
but $H^\mathrm{op}\cMod \neq H\cMod$ unless $H$ is
commutative. However, $S$ defines
a lax morphism $\sigma \colon
- \otimes_k H \, \idty \rightarrow H \otimes_k - \,{\idty}$,
given in components by
$$
		\sigma_X \colon X \otimes_k H
	\rightarrow H \otimes_k X, \quad
	x \otimes_k h \mapsto S(h) \otimes_k x.
$$
The fact that this is a lax morphism is
equivalent to the fact that $S$ is an algebra
anti-homomorphism.
Also, the lifted comonads agree and are given by
$H \otimes_k -$ with comonad structure given by the
coalgebra structure of $H$;
clearly, $\gamma = {\idty}
\colon {\idty} H \otimes_k - \rightarrow H \otimes_k - {\idty}$
is a colax morphism. Furthermore, the Yang-Baxter
condition is satisfied, so we have that $({\idty},
\sigma, \gamma)$ is a $1$-cell in the $2$-category of mixed
distributive laws. If we apply the $2$-functor $i$ to
this, we get a $1$-cell $(\Sigma, \tilde \sigma, \tilde
\gamma)$ between a comonad distributive law on the
category of left $H$-modules and one on the category of
right $H$-modules. The identity lifts to the functor
$\Sigma \colon H\mbox{-}\mathsf{Mod} \rightarrow
\mathsf{Mod}\mbox{-}H$ which sends a left $H$-module
$X$ to the right $H$-module with right action given by
$$
x \leftslice h := S(h) x.
$$

\section{Hopf monads \`a la Mesablishvili-Wisbauer}
\label{wisbimonad}
\subsection{Bimonads}
A \emph{bimonad} in the sense of
\cite{MR2787298} is a sextuple
$(\rA,\mu,\eta,\Delta^\rA,\varepsilon^\rA,\theta)$,
where $\rA
\colon \cC \rightarrow \cC$ is a functor, $(\rA,\mu,\eta)$ is
a monad, $(\rA,\Delta^\rA,\varepsilon^\rA)$ is a comonad and
$\theta \colon \rA\rA \rightarrow \rA\rA$ is a mixed distributive
law satisfying a list of compatibility conditions.

In particular, $ \mu $ and $ \Delta^A $ are required to
be compatible in the sense that there is a commutative
diagram \begin{equation}\label{wisbauerdiagram}
\begin{array}{c} \xymatrix{\rA\rA \ar[d]_{\rA
{\Delta}^\rA }
\ar[r]^\mu & \rA \ar[r]^{\Delta^\rA} & \rA\rA\\
\rA\rA\rA
\ar[rr]_{\theta \rA} & & \rA\rA\rA \ar[u]_{\rA\mu}} \end{array}
\end{equation}
The other defining conditions rule the
compatibility between the unit and the counit with each
other and with $ \mu $ respectively $ \Delta^\rA $, see
\cite{MR2787298} for the details.

It follows immediately that we also obtain an instance
of Theorem~\ref{arise} in this situation: if we take
$\cA=\cC^\bB$ to be the Eilenberg-Moore
category of the monad $\bB=(\rA,\mu,\eta)$ as in
Section~\ref{exceptional}, then the mixed distributive
law $\theta$ defines a lift
$\bV=(\rV,\Delta^\rV,\varepsilon^\rV)$ of the comonad
$\bC=(\rA,\Delta^\rA,\varepsilon^\rA)$ to $\cA$.

Note that in general, neither $\cA$ nor $\cC$ need to
be monoidal, so $\rB$ is in general not an opmonoidal
monad. Conversely, recall that for the examples of
Theorem~\ref{arise} obtained from opmonoidal monads,
$\rB$ need not equal $\rC$.

\subsection{Examples from bialgebras} In the main
example of bimonads in the above sense, we in fact do
have $\rB=\rC$ and we are in the situation of
Section~\ref{bimfromhopf} for a bialgebra $H$ over
$A=k$.  The commutativity of (\ref{wisbauerdiagram})
amounts to the fact that the coproduct is an algebra
map.

This setting provides an instance of
Proposition~\ref{sunshines} since there are two lifts
of $\rB=\rC$ from $\cA={k\cMod}$ to $\cB={H\cMod}$: the
canonical lift $\rS=\rT=\rF\rU$ which takes a left
$H$-module $L$ to the $H$-module $H \otimes_k L$ with
$H$-module structure given by multiplication in the
first tensor component, and the lift $\rV$ which takes
$L$ to $H \otimes_k L$ with $H$-action given by the
codiagonal action
$
	g(h \otimes_k y)=
	g_{(1)}h \otimes_k
	g_{(2)}y,
$
that is, the one defining the monoidal
structure on $\cB$.
Now the Galois map from Proposition~\ref{wisga} is the
Galois map
$$
	H \otimes_k L \rightarrow H \otimes_k
	L,\quad
	g\otimes_k y \mapsto g_{(1)} \otimes_k
	g_{(2)}y
$$
used to define left Hopf algebroids (when taking tensor products over $A \neq k$ resp.\ $\Aop$), which
for $A=k$ are simply Hopf algebras, and more generally
Hopf monads in the sense of
\cite[Theorem~5.8(c)]{MR3320218}.

\subsection{An example not from bialgebras}
Another
example of a bimonad is the \emph{nonempty list monad}
$\bL^+$ on $\cSet$, which assigns to a
set $X$ the set $\rL^+X$ of all nonempty lists
of elements in $X$, denoted $[x_1, \ldots, x_n]$. The monad
multiplication is given by concatenation of lists and
the unit maps $x$ to $[x]$.  The comonad
comultiplication is given by $\Delta [x_1, \ldots, x_n]
= [[x_1, \ldots, x_n], \ldots, [x_n]]$, the counit
is $\varepsilon[x_1, \ldots, x_n] = x_1$, and the mixed
distributive law
$$ \theta \colon \bL^+ \bL^+ \rightarrow \bL^+ \bL^+ $$ is defined as
follows: given a list
$$ [ [ x_{1,1}, \ldots, x_{1,
n_1} ] , \ldots, [x_{m,1}, \ldots, x_{m, n_m}]] $$ in
$\rL^+ X$, its image under $\theta X$ is the list with $$
\sum_{i=1}^m n_i (m-i+1) $$ terms, given by the
lexicographic order, that is \begin{align*}
\Big[[x_{1,1}, x_{2,1}, x_{3,1} \ldots, x_{m,1}],
\ldots, [x_{1, n_1}, x_{2,1}, x_{3,1}, \ldots,
x_{m,1}]&, \\ [x_{2,1}, x_{3,1} \ldots, x_{m,1}],
\ldots, [x_{2, n_2}, x_{3,1}, \ldots x_{m,1} ]&,\\
\ldots&,  \\  [ x_{m,1} ] , [ x_{m,2} ], \ldots,
[x_{m,n_m} ] \Big].&
\end{align*}

One verifies straightforwardly:

\begin{prop} $\bL^+$ becomes a bimonad on $\mathsf
{Set}$ whose Eilenberg-Moore category is
$\cSet^{\bL^+} \cong \mathsf{SemiGp}$, the
category of (nonunital) semigroups.  \end{prop}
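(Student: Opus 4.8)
The plan is to treat the two assertions separately: first that the six data $(\rL^+,\mu,\eta,\Delta,\varepsilon,\theta)$ constitute a bimonad, and second that the Eilenberg-Moore category of the underlying monad is $\mathsf{SemiGp}$. The second part is the classical identification of algebras for a free-algebra monad with the models of the corresponding algebraic theory, and is where I expect almost no difficulty; the first part is a finite but bookkeeping-heavy verification, and the genuine obstacle lies in the compatibility \rmref{wisbauerdiagram} together with the two mixed-distributive-law axioms that combine $\theta$ with $\mu$ and with $\Delta$.

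For the monad and comonad axioms I would note that $(\rL^+,\mu,\eta)$ is the free (nonunital) semigroup monad, so associativity and unitality of concatenation are immediate. For the comonad I would describe $\Delta$ as sending a list to its list of nonempty suffixes ordered by decreasing length, and $\varepsilon$ as the head; coassociativity then says that forming suffixes-of-suffixes agrees whether one brackets inside or outside, which one checks by indexing suffixes by their starting position, while the two counit laws reduce to the observations that the longest suffix is the whole list and that the head of the suffix starting at $i$ is $x_i$.

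The crux is the bimonad structure carried by the lexicographic $\theta$. I would fix a generic element of $\rL^+\rL^+ X$, i.e.\ a ragged array $(x_{i,j})$ with rows indexed by $i$ and entries $x_{i,1},\ldots,x_{i,n_i}$, and record $\theta$, $\mu$, $\Delta$, $\eta$ and $\varepsilon$ as explicit reindexing operations on such arrays. With this notation the distributive-law axioms and \rmref{wisbauerdiagram} each become an identity between two reindexed arrays, and the point is precisely that the lexicographic order used to define $\theta$ is the order that makes these reindexings coincide. The unit and counit conditions ($\theta\circ\eta\rL^+=\rL^+\eta$, $(\varepsilon\rL^+)\circ\theta=\rL^+\varepsilon$, and the mixed $\eta$/$\varepsilon$, $\mu$/$\varepsilon$, $\eta$/$\Delta$ conditions) are short; the multiplication axiom, the comultiplication axiom, and \rmref{wisbauerdiagram} are where the combinatorics must be controlled. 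I expect the main obstacle to be keeping the double indices and the induced lexicographic orders aligned on the two sides of these three diagrams; the cleanest route is to prove each by induction on the number $m$ of rows (or on $\sum_i n_i$), peeling off the first row so that $\mu$ and $\Delta$ act on a strictly smaller array.

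Finally, for the Eilenberg-Moore category I would exhibit mutually inverse functors. Given an algebra $a\colon\rL^+ X\to X$, set $x\cdot y:=a([x,y])$; the unit law gives $a([x])=x$, and evaluating $a\circ\mu=a\circ\rL^+ a$ on $[[x,y],[z]]$ and on $[[x],[y,z]]$ forces $(x\cdot y)\cdot z=a([x,y,z])=x\cdot(y\cdot z)$, so that $(X,\cdot)$ is a semigroup; an easy induction then shows $a([x_1,\ldots,x_n])=x_1\cdots x_n$, whence $a$ is determined by $\cdot$. Conversely a semigroup $(X,\cdot)$ defines an algebra by that same formula, well defined by associativity, and a map of sets is an algebra morphism exactly when it is a semigroup homomorphism. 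These assignments are inverse to one another and functorial, giving $\cSet^{\bL^+}\cong\mathsf{SemiGp}$; note that this part uses only the monad structure and not the bimonad data.
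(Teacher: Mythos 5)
Your proposal is correct and matches the paper's approach: the paper itself offers no details beyond ``One verifies straightforwardly,'' and what you outline---direct verification of the monad, comonad, and mixed-distributive-law axioms together with the compatibility diagram \rmref{wisbauerdiagram} via explicit indexing of ragged arrays, plus the standard identification of algebras over the free (nonunital) semigroup monad with semigroups---is exactly that straightforward verification, carried out in more detail than the paper gives. Your descriptions of $\Delta$ (suffixes in decreasing length), $\varepsilon$ (head), and $\theta$ agree with the paper's definitions, and the sketched checks (including the Eilenberg--Moore argument) are sound.
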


The second lift $\rV$ of the comonad $\bL^+$ that
one obtains from the bimonad structure on
$\mathsf{SemiGp}$ is as follows. Given a semigroup $X$,
we have $\rV X = \rL^+ X$ as sets, but the binary
operation is given by \begin{align*} \rV X \times \rV X
&\rightarrow \rV X \\ [x_1, \ldots, x_m][y_1, \ldots, y_n] &:=
[x_1y_1, \ldots, x_my_1, y_1, \ldots, y_n]. \end{align*}

Following Proposition~\ref{chicoalgprop}, given a
semigroup $X$, the unit turns the underlying set of $X$ into an
$\bL^+$-coalgebra and hence we get a right $
\chi$-coalgebra structure on $X$. Explicitly, $\rho_X \colon \rT X \rightarrow \rV X$ is given by
$$
\rho[ x_1, \ldots, x_n] = [x_1 \cdots x_n, x_2 \cdots
x_n, \ldots, x_n].
$$
The image of $\rho$ is known as
the \emph{left machine expansion} of $X$
\cite{MR745358}.

\begin{prop}
The only $\theta$-entwined algebra is the trivial semigroup $\emptyset$.
\end{prop}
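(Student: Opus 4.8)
The plan is to turn the abstract entwining square into a purely numerical identity relating the lengths of the lists produced by the coalgebra structure, and then to contradict that identity using associativity of the semigroup product. Here $\cA=\cSet$, both the monad and the comonad are $\bL^+$, and $\theta$ is the mixed distributive law displayed above. A $\theta$-entwined algebra is therefore a set $X$ with an $\bL^+$-algebra structure $\beta\colon \rL^+X\to X$ (equivalently a semigroup product, $\beta[x_1,\dots,x_m]=x_1\cdots x_m$) and an $\bL^+$-coalgebra structure $\nabla\colon X\to \rL^+X$, subject to the square $\nabla\circ\beta=\rL^+\beta\circ\theta X\circ\rL^+\nabla$. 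Since $\rL^+$ only ever returns nonempty lists, the quantity $\ell(x):=|\nabla(x)|\in\N$ satisfies $\ell(x)\ge 1$ for every $x\in X$ (the counit law $\varepsilon\circ\nabla=\idty$ just records that the first entry of $\nabla(x)$ is $x$, which I will not otherwise need). I first note that $\emptyset$ is trivially entwined, as $\rL^+\emptyset=\emptyset$ makes $\beta$, $\nabla$, and the square vacuous; so the real content is to rule out a nonempty carrier.

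The key step is to evaluate the entwining square on a two-element list $[u,v]\in\rL^+X$ and to track only the \emph{number of terms} (the outer length). First, $\rL^+\nabla[u,v]=[\nabla u,\nabla v]$ has outer length $2$ and inner lengths $n_1=\ell(u)$, $n_2=\ell(v)$. Applying $\theta X$ with $m=2$, the explicit form of $\theta$ produces a list with $\sum_{i=1}^{2}n_i(m-i+1)=2\ell(u)+\ell(v)$ terms. Then $\rL^+\beta$ collapses each term to a single element and hence preserves the number of terms, so the right-hand side of the square has length $2\ell(u)+\ell(v)$. The left-hand side is $\nabla(\beta[u,v])=\nabla(uv)$, of length $\ell(uv)$. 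Comparing, I obtain the identity $\ell(uv)=2\ell(u)+\ell(v)$ for all $u,v\in X$.

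Finally, suppose $X\neq\emptyset$ and fix $x\in X$. The identity gives $\ell(x^2)=2\ell(x)+\ell(x)=3\ell(x)$, and computing $\ell(x^3)$ in two ways yields $\ell(x\cdot x^2)=2\ell(x)+\ell(x^2)=5\ell(x)$ and $\ell(x^2\cdot x)=2\ell(x^2)+\ell(x)=7\ell(x)$. Since $x\cdot x^2=x^2\cdot x$, associativity forces $5\ell(x)=7\ell(x)$, whence $\ell(x)=0$, contradicting $\ell(x)\ge 1$. Thus $X$ can have no elements, i.e. $X=\emptyset$. The only genuine obstacle in this argument is the bookkeeping in the middle paragraph: one must read off correctly from the lexicographic description of $\theta$ that, on a two-element outer list, the output has $2\ell(u)+\ell(v)$ terms (it is the asymmetry of this count — the leading factor $2$ — that drives the contradiction), and one must confirm that $\rL^+\beta$ and $\rL^+\nabla$ affect the number of terms exactly as claimed. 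Everything else is formal, and the same counting identity specialised to general $u,v,w$ gives the contradiction equally well.
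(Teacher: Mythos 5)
The step you yourself single out as the crux is exactly where the argument breaks: the count of the outer length of the output of $\theta X$ is wrong. In the definition of $\theta$, the number $\sum_{i=1}^m n_i(m-i+1)$ counts the total number of symbols $x_{i,j}$ occurring in the output, \emph{not} the number of inner lists: the $i$-th block of the displayed lexicographic list consists of $n_i$ inner lists, each of length $m-i+1$, so the outer length of the output is $n_1+\cdots+n_m$. Concretely, for $m=2$, applying $\theta X$ to $[\nabla u,\nabla v]$ with $\nabla u=[u,u_1,\ldots,u_p]$ and $\nabla v=[v,v_1,\ldots,v_q]$ gives, by the displayed definition,
$$
\bigl[\,[u,v],\ [u_1,v],\ \ldots,\ [u_p,v],\ [v],\ [v_1],\ \ldots,\ [v_q]\,\bigr],
$$
whose outer length is $\ell(u)+\ell(v)$, not $2\ell(u)+\ell(v)$. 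Since $\rL^+\beta$ preserves outer length, the numerical identity that the entwining square actually yields is $\ell(uv)=\ell(u)+\ell(v)$.

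This is not a repairable slip but the collapse of the whole strategy: an additive length function is perfectly compatible with associativity, since $\ell(x\cdot x^2)$ and $\ell(x^2\cdot x)$ both equal $3\ell(x)$, so no contradiction of the form $5\ell(x)=7\ell(x)$ can ever be extracted. Indeed, word length on the free semigroup on one generator satisfies $\ell(uv)=\ell(u)+\ell(v)$ with $\ell\ge 1$ everywhere, so no argument that tracks only the sizes of the lists can force $X=\emptyset$. The paper's proof is obliged to work with the actual entries of the lists rather than their number: it interprets an $\bL^+$-coalgebra structure as a forest in which $\nabla x$ is the list of predecessors of $x$ (this encodes counitality and, crucially, coassociativity: each tail of $\nabla x$ must again be a $\nabla$-list), uses the entwining square to manufacture, starting from a root, a branch of height two, and then shows that the square forces a single element to admit two distinct immediate predecessors, contradicting the tree structure. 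That comparison of concrete list entries, underwritten by coassociativity of $\nabla$, is the ingredient your argument never touches, and without it the proposition is out of reach.
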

\begin{proof}
An $\bL^+$-coalgebra structure $\beta \colon T \to \bL^+ T$ is equivalent to $T$ being a forest of at most countable height (rooted) trees, where each level may have arbitrary cardinality. The structure map $\beta$ sends $x$ to the finite list of predecessors of $x$. A $\theta$-entwined algebra is therefore such a forest, which also has the structure of a semigroup such that for all $x,y \in T$ with $\beta(y) = [y, y_1, \ldots, y_n]$ we have
$$
\beta(xy) = [xy, xy_1, \ldots, xy_n , y, y_1, \ldots, y_n].
$$

Let $T$ be a $\theta$-entwined algebra. If $T$ is non-empty, then there must be a root. We can multiply this root
with itself to generate branches of arbitrary height. Suppose that we have a
branch of height two; that is to say, an element $y \in T$ with $\beta(y) = [y,x]$ 
(so, in particular, $x \neq y$). Then $\beta(xy) = [xy, y]$,  but
$\beta(xx) = [xx, xy, x, y]$. This is impossible since $x$ and $y$ cannot both be the predecessor of $xy$.
\end{proof}

%\bibliography{Bibliography.1}
\bibliographystyle{amsalpha}

\def\cprime{$'$}
\providecommand{\bysame}{\leavevmode\hbox to3em{\hrulefill}\thinspace}
\providecommand{\MR}{\relax\ifhmode\unskip\space\fi MR }
% \MRhref is called by the amsart/book/proc definition of \MR.
\providecommand{\MRhref}[2]{%
  \href{http://www.ams.org/mathscinet-getitem?mr=#1}{#2}
}
\providecommand{\href}[2]{#2}

\end{document}